\newcommand{\R}{\mathbb{R}}
\def\<#1,#2>{\left\langle #1,#2\right\rangle}
\definecolor{shadecolor}{gray}{0.9}
\declaretheoremstyle[
headfont=\normalfont\bfseries,
notefont=\mdseries, notebraces={(}{)},
bodyfont=\normalfont,
postheadspace=0.5em,
spaceabove=1pt,
mdframed={
  skipabove=8pt,
  skipbelow=8pt,
  hidealllines=true,
  backgroundcolor={shadecolor},
  innerleftmargin=4pt,
  innerrightmargin=4pt}
]{shaded}
\declaretheorem[style=shaded,within=section]{definition}
\declaretheorem[style=shaded,sibling=definition]{theorem}
\declaretheorem[style=shaded,sibling=definition]{lemma}
\newcommand{\argmin}{\mathop{\arg\!\min}}
\newcommand{\circledOne}{\text{\ding{172}}}
\newcommand{\circledTwo}{\text{\ding{173}}}
\newcommand{\circledThree}{\text{\ding{174}}}
\newcommand{\circledFour}{\text{\ding{175}}}
\newcommand{\EE}{\mathbf{E}}
\def\R{\mathbb{R}}
\def\R{\mathbb R}
\def\EE{\mathbb E}
\def\e{\varepsilon}
\def\la{\langle}
\def\ra{\rangle}
\def\rev2#1{{\color{black}#1}} 
\def\tf{\widetilde{f}}
\def\tnmf{\widetilde{\nabla}^m f}
\def \NN {\mathbb N}
\def \EE {\mathbb E}
\newlength{\dhatheight}
\begin{document}
\title{An Accelerated Directional Derivative Method for Smooth Stochastic Convex Optimization}
\author{Pavel Dvurechensky \quad Eduard Gorbunov \quad Alexander Gasnikov
\thanks{This paper was published in European Journal of Operational Research (DOI: \url{https://doi.org/10.1016/j.ejor.2020.08.027}).\newline
P.~Dvurechensky (\textit{pavel.dvurechensky@wias-berlin.de}) is with Weierstrass Institute for Applied Analysis and Stochastics and 
Institute for Information Transmission Problems RAS. E.~Gorbunov (\textit{eduard.gorbunov@phystech.edu}, \href{https://eduardgorbunov.github.io/}{eduardgorbunov.github.io}) is with Moscow Institute of Physics and Technology and National Research University Higher School of Economics. A.~Gasnikov (\textit{gasnikov@yandex.ru}) is with Moscow Institute of Physics and Technology, National Research University Higher School of Economics and Institute for Information Transmission Problems RAS}}
\date{August 20, 2020}
\maketitle

\begin{abstract}
We consider smooth stochastic convex optimization problems in the context of algorithms which are based on directional derivatives of the objective function. This context can be considered as an intermediate one between derivative-free optimization and gradient-based optimization. 
We assume that at any given point and for any given direction, a stochastic approximation for the directional derivative of the objective function at this point and in this direction is available with some additive noise. 
The noise is assumed to be of an unknown nature, but bounded in the absolute value. We underline that we consider directional derivatives in \textit{any} direction, as opposed to coordinate descent methods which use only derivatives in coordinate directions.
For this setting, we propose a non-accelerated and an accelerated directional derivative method and provide their complexity bounds. 
Our non-accelerated algorithm has a complexity bound which is similar to the gradient-based algorithm, that is, without any dimension-dependent factor.
Our accelerated algorithm has a complexity bound which coincides with the complexity bound of the accelerated gradient-based algorithm up to a factor of square root of the problem dimension. We extend these results to strongly convex problems.
\end{abstract}

\section{Introduction}
\label{sec:intro}
Zero-order or derivative-free optimization considers problems of minimization of a function using only, possibly noisy, observations of its values. This area of optimization has a long history, starting as early as in 1960 \cite{rosenbrock1960automatic,fabian1967stochastic}, see also \cite{brent1973algorithms,spall2003introduction,conn2009introduction}. 
Even an older area of optimization, which started in 19th century \cite{cauchy1847methode}, considers first-order methods which use the information about the gradient of the objective function. 
In this paper, we choose an intermediate class of problems. 
Namely, we assume that at any given point and for any given direction, a noisy stochastic approximation for the directional derivative of the objective function at this point in this direction is available. 
We underline that we consider directional derivatives in \textit{any} direction, as opposed to coordinate descent methods which rely only on derivatives in coordinate directions.
We refer to the class of optimization methods, which use directional derivatives of the objective function, as \textit{directional derivative methods}. 
Unlike well developed areas of derivative-free and first-order stochastic optimization methods, the area of directional derivative optimization methods for stochastic optimization problems is not sufficiently covered in the literature.
This class of optimization methods can be motivated by at least three situations.

The first one is connected to Automatic Differentiation \cite{wengert1964simple}. Assume that the objective function is given as a computer program, which performs elementary arithmetic operations and elementary functions evaluations. 
Automatic Differentiation allows to calculate the gradient of this objective function and the additional computational cost is no more than five times larger than the cost of the evaluation of the objective value. The drawback of this approach is that it requires to store in memory the result of all the intermediate operations, which can require large memory amount. On the contrary, calculation of the directional derivative is easier than the calculation of the full gradient and requires the same memory amount as the calculation of the value of the objective  \cite{kim1984efficient}. 
Since a random vector can be a part of the program input or some randomness can be used during the program execution, stochastic optimization problems can also be considered. 

Importantly, automatic calculation of the directional derivative does not require the objective function to be smooth. This fact motivates the study of directional derivative methods in connection to Deep Learning. Indeed, learning problem is often stated as a problem of minimization of a loss function. A non-smooth activation function, called rectifier, is frequently used in Deep Learning as a building block for the loss function. Formally speaking, this non-smoothness does not allow to use Automatic Differentiation in the form of backpropagation to calculate the gradient of the objective function. At the same time, directional derivatives can be calculated by properly modified backpropagation.

The second motivating situation is connected to quasi-variational inequalities, which are used in modelling of different phenomena, such as sandpile formation and growth \cite{prigozhin1996variational}, determination of lakes and river networks \cite{barett2014lakes}, and superconductivity \cite{barett2010quasi-variational}. It happens that directional derivatives can be calculated for such problems \cite{mordukhovich2007coderivative} as a solution to some auxiliary problem. Since this subproblem can not always be solved exactly, the noise in the directional derivative naturally arises. If the considered physical phenomenon takes place in some random media, stochastic optimization can be a natural approach to use. 

The third motivating situation is connected to derivative-free stochastic optimization. In this situation a gradient approximation, based on the difference of stochastic approximations for the values of the objective in two close points, can be considered as a noisy directional derivative in the direction given by the difference of these two points \cite{dvurechensky2017randomized}. In this case, derivative-free stochastic optimization can be considered as a particular case of directional derivative stochastic optimization.

Motivated by potential presence of non-stochastic noise in the problem,
we assume that the noise in the directional derivative consists of two parts. Similar to stochastic optimization problems, the first part is of a stochastic nature. On the opposite, the second part is an additive noise of an unknown nature, but bounded in the absolute value. 
More precisely, we consider the following optimization problem
\begin{equation}
\label{eq:PrSt}
   \min_{x\in\R^n} \left\{ f(x) := \EE_{\xi}[F(x,\xi)] = \int_{\mathcal{X}}F(x,\xi)dP(x) \right\}, 
\end{equation}
where $\xi$ is a random vector with probability distribution $P(\xi)$, $\xi \in \mathcal{X}$, and for $P$-almost every $\xi \in \mathcal{X}$, the function $F(x,\xi)$ is closed  and  convex. Moreover, we assume that, for $P$ almost every $\xi$, the function $F(x,\xi)$ has gradient $g(x,\xi)$, which is $L(\xi)$-Lipschitz continuous with respect to the Euclidean norm and there exists $L_2\geqslant 0$ such that $\sqrt{\EE_{\xi} L(\xi)^2 } \leqslant L_2 < +\infty$. Under this assumptions, $\EE_{\xi}g(x,\xi) = \nabla f(x)$ and $f$ has $L_2$-Lipschitz continuous gradient with respect to the Euclidean norm. Also we assume that
\begin{equation}
\label{stoch_assumption_on_variance}
    \EE_{\xi}[\|g(x,\xi) - \nabla f(x)\|_2^2] \leqslant \sigma^2,
\end{equation}
where $\|\cdot\|_2$ is the Euclidean norm.

Finally, we assume that an optimization procedure, given a point $x \in \R^{n}$, direction $e \in S_2(1)$ and $\xi$ independently drawn from $P$, can obtain a noisy stochastic approximation $\tf'(x,\xi,e)$ for the directional derivative $\la g(x,\xi),e \ra$: 
\begin{align}
\tf'(x,\xi,e) &= \la g(x,\xi),e \ra + \zeta(x,\xi,e) + \eta(x,\xi,e), \notag \\ \EE_{\xi}(\zeta(x,\xi,e))^2 &\leqslant \Delta_{\zeta}, \;\forall x \in \R^n, \forall e \in S_2(1),  \notag \\ |\eta(x,\xi,e)| & \leqslant  \Delta_{\eta}, \;\forall x \in \R^n, \forall e \in S_2(1), \; \text{a.s. in } \xi,
\label{eq:tf_def}
\end{align}
where $S_2(1)$ is the Euclidean sphere or radius one with the center at the point zero and the values $\Delta_{\zeta}$, $\Delta_{\eta}$ are controlled and can be made as small as it is desired. Note that we use the smoothness of $F(\cdot,\xi)$ to write the directional derivative as $\la g(x,\xi),e \ra$, but we \textit{do not assume} that the whole stochastic gradient $g(x,\xi)$ is available.

It is well-known \cite{lan2012optimal,devolder2011stochastic,dvurechensky2016stochastic,gasnikov2016stochasticInter} that, if the stochastic approximation $g(x,\xi)$ for the gradient of $f$ is available, an accelerated gradient method has complexity bound $O\left(\max\left\{\sqrt{L_2/\e},\sigma^2/\e^2\right\}\right)$, where $\e$ is the target optimization error. The question, to which we give a positive answer in this paper, is as follows.

\textit{Is it possible to solve a smooth stochastic optimization problem with the same $\e$-dependence in the complexity and only noisy observations of the directional derivative?} 


\subsection{Related work}
We first consider the related work on directional derivative optimization methods and, then, a closely related class of derivative-free methods with two-point feedback, the latter meaning that an optimization method uses two function value evaluations on each iteration. Since all the considered methods are randomized, we compare oracle complexity bounds in terms of expectation, that is, a number of directional derivatives or function values evaluations which is sufficient to achieve an error $\e$ in the expected optimization error $\EE f(\hat{x}) - f^*$, where $\hat{x}$ is the output of an algorithm and $f^*$ is the optimal value of $f$.  

\subsubsection{Directional derivative methods}
\textbf{Deterministic smooth optimization problems.}
In \cite{nesterov2017random}, the authors consider the Euclidean case and propose a non-accelerated and an accelerated directional derivative method for smooth convex problems with complexity bounds $O(nL_2/\e)$ and $O(n\sqrt{L_2/\e})$ respectively. 
Also they propose a non-accelerated and an accelerated method for problems with $\mu$-strongly convex objective and prove complexity bounds $O(nL_2/\mu\log_2(1/\e))$ and $O(n\sqrt{L_2/\mu}\log_2(1/\e))$ respectively.
For a more general case of problems with additional bounded noise in directional derivatives, but also for the Euclidean case, an accelerated directional derivative method was proposed in \cite{dvurechensky2017randomized} and a bound $O(n\sqrt{L_2/\e})$ was proved.

We also should mention coordinate descent methods. In the seminal paper \cite{nesterov2012efficiency}, a random coordinate descent for smooth convex and $\mu$-strongly convex optimization problems were proposed and $O(L/\e)$ and $O(L/\mu\log_2(1/\e))$ complexity bounds were proved, where $L$ is an effective Lipschitz constant of the gradient varying from $n$ to some average over coordinates coordinate-wise Lipschitz constant. In the same paper, an accelerated version of random coordinate descent was proposed for convex problems and $O(n\sqrt{L/\e})$ complexity bound was proved. Papers \cite{lee2013efficient,fercoq2015accelerated,lin2014accelerated,shalev-shwartz2014accelerated} generalize accelerated random coordinate descent for different settings, including $\mu$-strongly convex problems, and \cite{nesterov2017efficiency,allen2016even,gasnikov2016accelerated} provide a $O(\sqrt{L/\e})$ and $O(\sqrt{L/\mu}\log_2(1/\e))$ complexity bounds, where $L$ is an effective Lipschitz constant of the gradient varying from $n$ to some average over coordinates coordinate-wise Lipschitz constant, and, in the best case, is dimension-independent.
An accelerated random coordinate descent with inexact coordinate-wise derivatives was proposed in \cite{dvurechensky2017randomized} with $O(n\sqrt{L/\e})$ complexity bound and also a unified view on directional derivative methods, coordinate descent and derivative-free methods.  

\textbf{Stochastic optimization problems.}
A directional derivative method for non-smooth stochastic convex optimization problems was introduced in \cite{nesterov2017random} with a complexity bound $O(n^2/\e^2)$. 
A random coordinate descent method for non-smooth stochastic convex and $\mu$- strongly convex optimization problems were introduced in \cite{dang2015stochastic} with complexity bounds $O(n/\e^2)$ and $O(n/\mu\e)$ respectively. 

\subsubsection{Derivative-free methods}
\textbf{Deterministic smooth optimization problems.}
A non-accelerated and an accelerated derivative-free method for this type of problems were proposed in \cite{nesterov2017random} for the Euclidean case with the bounds $O(nL_2/\e)$ and $O(n\sqrt{L_2/\e})$ respectively. 
The same paper proposed a non-accelerated and an accelerated method for $\mu$-strongly convex problems with complexity bounds $O(nL_2/\mu\log_2(1/\e))$ and $O(n\sqrt{L_2/\mu}\log_2(1/\e))$ respectively.
A non-accelerated derivative-free method for deterministic problems with additional bounded noise in function values was proposed in \cite{bogolubsky2016learning} together with $O(nL_2/\e)$ bound and application to learning parameter of a parametric PageRank model, see also \cite{gasnikov2017about,gasnikov2018about}. Deterministic problems with additional bounded noise in function values were also considered in \cite{dvurechensky2017randomized}, where several accelerated derivative-free methods, including Derivative-Free Block-Coordinate Descent, were proposed and a bound $O(n\sqrt{L/\e})$ was proved, where $L$ depends on the method and, in some sense, characterizes the average over blocks of coordinates Lipschitz constant of the derivative in the block. Mixed first-order/zero-order setting is considered in \cite{beznosikov2020derivative-Free}.
\rev2{After our paper appeared as a preprint, the papers \cite{berahas2019derivative-free,bollapragada2019adaptive} studied derivative-free quasi-Newton methods for problems with noisy function values, and the paper \cite{berahas2019theoretical} reported theoretical and empirical comparison of different gradient approximations for zero-order methods.}

\textbf{Stochastic optimization problems.}
Most of the authors in this group solve a more general problem of bandit convex optimization and obtain bounds on the so-called regret. It is well known \cite{cesa-bianchi2002generalization} that a bound on the regret can be converted to a bound on the expected optimization error.
Non-smooth stochastic optimization problems were considered in \cite{nesterov2017random}, where an $O(n^2/\e^2)$ complexity bound was proved for a derivative-free method. This bound was improved by \cite{duchi2015optimal,gasnikov2016gradient-free,gasnikov2017stochastic,shamir2017optimal,bayandina2017gradient-free,hu16bandit} to\footnote{$\widetilde{O}$ hides polylogarithmic factors $(\ln n)^c$, $c>0$.} $\widetilde{O}(n^{2/q}R_p^2/\e^2)$, where $p\in \{1,2\}$, $\frac{1}{p}+\frac{1}{q} = 1$ and $R_p$ is the radius of the feasible set in the  $p$-norm $\|\cdot\|_p$. 
For non-smooth $\mu_p$-strongly convex  w.r.t. to $p$-norm problems, the authors of \cite{gasnikov2017stochastic,bayandina2017gradient-free} proved a bound $\widetilde{O}(n^{2/q}/(\mu_p\e))$. A version of these methods for non-smooth saddle-point problems is developed in \cite{beznosikov2020gradient-free}.

Intermediate, partially smooth problems with a restrictive assumption of boundedness of $\EE~\|g(x,\xi)\|^2$, were considered in \cite{duchi2015optimal}, where it was proved that a proper modification of Mirror Descent algorithm with derivative-free approximation of the gradient gives a bound  $O(n^{2/q}R_p^2/\e^2)$ for convex problems, improving upon the bound $\widetilde{O}(n^2/\e^2)$ of \cite{agarwal2010optimal}. 
For strongly convex w.r.t $2$-norm problems, the authors of \cite{agarwal2010optimal} obtained a bound $\widetilde{O}(n^2/\e)$, which was later extended for $\mu_p$-strongly convex problems and improved to $\widetilde{O}(n^{2/q}/(\mu_p\e))$ in \cite{gasnikov2017stochastic}.

In the fully smooth case, without the assumption that $\EE \|g(x,\xi)\|^2 < +\infty$, papers \cite{ghadimi2016mini-batch,ghadimi2013stochastic} proposed a derivative-free algorithm for the Euclidean case with the bound 
$$
\widetilde{O}\left(\max\left\{\frac{nL_2R_2}{\e},\frac{n\sigma^2}{\e^2}\right\}\right).
$$
In \cite{dvurechensky2018accelerated}, the authors proposed a non-accelerated and an accelerated derivative-free method with the bounds 
$$
\widetilde{O}\left(\max\left\{\frac{n^{\frac{2}{q}}L_2R_p^2}{\e},\frac{n^{\frac{2}{q}}\sigma^2R_p^2}{\e^2}\right\}\right), \quad \widetilde{O}\left(\max\left\{n^{\frac12+\frac{1}{q}}\sqrt{\frac{L_2R_p^2}{\e}},\frac{n^{\frac{2}{q}}\sigma^2R_p^2}{\e^2}\right\}\right)
$$
respectively, where $R_p$ characterizes the distance in $p$-norm between the starting point of the algorithm and a solution to \eqref{eq:PrSt}, $p\in \{1,2\}$ and $q \in \{2,\infty\}$ is the conjugate to $p$, given by the identity $\frac{1}{p}+\frac{1}{q} = 1$. 

\rev2{The authors of \cite{chen2020accelerated} combine accelerated derivative-free optimization with accelerated variance reduction technique for finite-sum convex problems in the Euclidean setup.}

\rev2{
\textbf{Other works.} For a recent review of derivative-free optimization see \cite{larson2019derivative-free} and for a review of stochastic optimization, including derivative-free optimization, see \cite{powell2019unified}.
}

\subsection{Our contributions}
As we have seen above, only two results on directional derivative methods for non-smooth stochastic convex optimization are available in the literature, and, to the best of our knowledge, nothing is known about directional derivative methods for smooth stochastic convex optimization, even in the well-developed area of random coordinate descent methods. Our main contribution consists in closing this gap in the theory of directional derivative methods for stochastic optimization and considering even more general setting with additional noise of an unknown nature in the directional derivative. 

Our methods are based on two proximal setups \cite{ben-tal2015lectures} characterized by the value\footnote{Strictly speaking, we are able to consider all the intermediate cases $p\in [1,2]$, but we are not aware of any proximal setup which is compatible with $p \notin \{1,2\}$} $p\in \{1,2\}$ and its conjugate $q \in \{2,\infty\}$, given by the identity $\frac{1}{p}+\frac{1}{q} = 1$. The case $p=1$ corresponds to the choice of $1$-norm in $\R^n$ and corresponding prox-function which is strongly convex with respect to this norm (we provide the details below). The case $p=2$ corresponds to the choice of the Euclidean $2$-norm in $\R^n$ and squared Euclidean norm as the prox-function.
As our main contribution, we propose an Accelerated Randomized Directional Derivative (ARDD) algorithm for smooth stochastic optimization based on noisy observations of directional derivative of the objective. Our method has the complexity bound 
\begin{equation}
\label{eq:ARDFDSComplInformal}
\widetilde{O}\left(\max\left\{n^{\frac12+\frac{1}{q}}\sqrt{\frac{L_2R_p^2}{\e}},\frac{n^{\frac{2}{q}}\sigma^2R_p^2}{\e^2}\right\}\right),
\end{equation}
where $R_p$ characterizes the distance in $p$-norm between the starting point of the algorithm and a solution to \eqref{eq:PrSt}. 

As our second contribution, we propose a non-accelerated Randomized Directional Derivative (RDD) algorithm with the complexity bound
\begin{equation}
\label{eq:RDFDSComplInformal}
\widetilde{O}\left(\max\left\{\frac{n^{\frac{2}{q}}L_2R_p^2}{\e},\frac{n^{\frac{2}{q}}\sigma^2R_p^2}{\e^2}\right\}\right).
\end{equation}
Interestingly, \rev2{for this method when $p=1$ and $q=\infty$, we obtain  complexity bound which depends on the dimension $n$ only logarithmically despite we use only noisy directional derivative observations.} 
\rev2{Let us comment on the comparison between the accelerated and non-accelerated method. In the regime of small variance $\sigma^2$ in both bounds the dominating term is the first one. If $p=1$, $q=\infty$ and $L_2R_p^2 < n\e$, then the bound for the non-accelerated method is smaller than that of for the accelerated. In this regime it is preferred to use the non-accelerated method.}

Note that, in the case of \eqref{eq:PrSt} having a sparse solution, our bounds for $p=1$ allow to gain a factor of $\sqrt{n}$ in the complexity of the accelerated method and a factor of $n$ in the complexity of the non-accelerated method in comparison to the Euclidean case $p=2$. 
Indeed, sparsity of a solution $x^*$ means that $\|x^*\|_1 = O(1) \cdot \|x^*\|_2$ and, if the starting point is zero, we obtain $R_1^2 = \|x^*\|_1^2 = O(1) \cdot \|x^*\|_2^2 = O(1) R_2^2$. Hence, the bounds for $p=1$ and $p=2$ can be compared only based on the corresponding powers of $n$, the latter being smaller for the case $p=1$, $q=\infty$.

We underline here that our methods are based on random directions drawn from the uniform distribution on the unit Euclidean sphere and our results for $p=1$ can not be obtained by random coordinate descent.

As our third contribution, we extend the above results to the case when the objective function is additionally known to be $\mu_p$-strongly convex w.r.t. $p$-norm. For this case, we propose an accelerated and a non-accelerated algorithm which respectively have complexity bounds
\begin{equation}
    \label{eq:SCBoundsInformal}
    \widetilde{O}\left(\max\left\{n^{\frac12+\frac{1}{q}}\sqrt{\frac{L_2}{\mu_p}}\log_2\frac{\mu_pR_p^2}{\e},\frac{n^{\frac{2}{q}}\sigma^2}{\mu_p\e}\right\}\right), \quad \widetilde{O}\left(\max\left\{\frac{n^{\frac{2}{q}}L_2}{\mu_p}\log_2\frac{\mu_pR_p^2}{\e},\frac{n^{\frac{2}{q}}\sigma^2}{\mu_p\e}\right\}\right).
\end{equation}
\rev2{In the regime of small variance $\sigma^2$ in both bounds the dominating term is the first one. If $p=1$, $q=\infty$ and $\frac{L_2}{\mu_p} < n$, then the bound for the non-accelerated method is smaller than that of for the accelerated. In this regime of relatively well-conditioned problems it is preferred to use the non-accelerated method.}

As our final contribution, we consider derivative-free smooth stochastic convex optimization with inexact values of the stochastic approximations for the function values as a particular case of optimization using noisy directional derivatives. This allows us to obtain the complexity bounds of \cite{dvurechensky2018accelerated} as a straightforward corollary of our results in this paper. At the same time we obtain new complexity bounds for the strongly convex case which, to the best of our knowledge, were not known in the literature.




Note that our results for accelerated and non-accelerated methods are somewhat similar to the finite-sum minimization problems of the form
$$
\min_{x \in \R^n} \sum_{i=1}^mf_i(x),
$$
where $f_i$ are convex smooth functions. For such problems accelerated methods have complexity $\widetilde{O}(m+\sqrt{mL/\e})$ and non-accelerated methods have complexity $\widetilde{O}(m+L/\e)$ (see, e.g. \cite{allen2016katyusha} for a nice review on the topic). As we see, acceleration allows to take the square root of the second term but for the price of $\sqrt{m}$ and the two bounds can not be directly compared without additional assumptions on the value of $m\e$.

{
\textbf{Special note on \cite{dvurechensky2018accelerated,vorontsova2019accelerated}.}
One of the novelties and insights in the approach of this paper in comparison to \cite{dvurechensky2018accelerated,vorontsova2019accelerated} is to realize that gradient-free methods are a particular case of directional derivative methods with inexact oracle.
Unlike these papers, in the current paper we need to account for two types of inexactness. One is stochastic with bounded second moment and the second is bounded a.s. This is a more complicated assumption than the one in \cite{dvurechensky2018accelerated,vorontsova2019accelerated} and we have to assume that the error values can be controlled, unlike \cite{dvurechensky2018accelerated,vorontsova2019accelerated}. Moreover, since the oracle returns different information, we have to construct our stochastic approximation of the gradient differently, which also changes the proof technique.
We also analyze in this paper the case of strongly convex objective values, which was not done in \cite{dvurechensky2018accelerated,vorontsova2019accelerated}.
}

\subsection{Paper organization}
The rest of the paper is organized as follows. In Section \ref{S:Algo_theor}, both for convex and strongly convex problems, we introduce our algorithms, state their convergence rate theorems and corresponding complexity bounds. Section \ref{S:ARDDProofs} is devoted to proof of the convergence rate theorem for our accelerated method and convex objective functions. Section \ref{sec:rdfds} is devoted to proof of the convergence rate theorem for our non-accelerated method and convex objective functions. \rev2{In Section \ref{S:SCProofs} we provide the proofs for the case of strongly convex objective function. Finally, in Section \ref{sec:numerical_experiments} we provide numerical experiments with two types of objective functions: worst case functions for first-order methods \cite{nesterov2004introduction} and least squares problem.
}

\section{Algorithms and main results}
\label{S:Algo_theor}
In this section, we provide our non-accelerated and accelerated directional derivative methods both for convex and strongly convex problems together with convergence theorems and corresponding complexity bounds. The proofs are rather technical and postponed to next sections.
\subsection{Preliminaries}
We start by introducing necessary objects and technical results.

\noindent \textbf{Proximal setup.}
Let $p\in[1,2]$ and $\|x\|_p$ be the $p$-norm in $\R^n$ defined as 
$$
\|x\|_p^p = \sum\limits_{i=1}^n|x_i|^p, \quad x \in \R^n,
$$
$\|\cdot\|_{q}$ be its dual, defined by $\|g\|_{q} = \max\limits_{x} \big\{ \la g, x \ra, \| x \|_p \leq 1 \big\}$, where $q \in [2,\infty]$ is the conjugate number to $p$, given by $\frac{1}{p} + \frac{1}{q} = 1$, and, for $q = \infty$, by definition $\|x\|_\infty = \max\limits_{i=1,\ldots,n}|x_i|$.

We choose a \textit{prox-function} $d(x)$ which is continuous, convex on $\R^n$ and is $1$-strongly convex on $\R^n$ with respect to $\|\cdot\|_p$, i.e., for any $x, y \in \R^n$ $d(y)-d(x) -\la \nabla d(x) ,y-x \ra \geq \frac12\|y-x\|_p^2$.
Without loss of generality, we assume that $\min\limits_{x\in \R^n} d(x) = 0$.
We define also the corresponding \textit{Bregman divergence} $V[z] (x) = d(x) - d(z) - \la \nabla d(z), x - z \ra$, $x, z \in \R^n$. Note that, by the strong convexity of $d$,
\begin{equation}
\label{eq:VStrConv}
V[z] (x) \geq \frac{1}{2}\|x-z\|_p^2, \quad x, z \in \R^n.
\end{equation}
For the case $p=1$, we choose the following prox-function \cite{ben-tal2015lectures}
\begin{equation}
\label{eq:dp1}
d(x) = \frac{{\rm e}n^{(\kappa-1)(2-\kappa)/\kappa}\ln n}{2} \|x\|_\kappa^2, \quad \kappa=1 + \frac{1}{\ln n}
\end{equation}
and, for the case $p=2$, we choose the prox-function to be the squared Euclidean norm
\begin{equation}
\label{eq:dp2}
d(x) = \frac{1}{2}\|x\|_2^2.
\end{equation}

\noindent \textbf{Main technical lemma.}
In our proofs of complexity bounds, we rely on the following lemma. The proof is rather technical and is provided in the appendix. 
\begin{lemma}
		\label{Lm:MainTechLM}
    Let $e \in RS_2(1)$, i.e be a random vector uniformly distributed on the surface of the unit Euclidean sphere in $\R^n$, $p\in[1,2]$ and $q$ be given by $\frac{1}{p}+\frac{1}{q} = 1$. Then, for $n \geqslant8$ and $\rho_n = \min\{q-1,\,16\ln n - 8\}n^{\frac{2}{q}-1}$,		
		\begin{equation}\label{assumption_e_norm}
        \EE_e\|e\|_q^2 \leq \rho_n,
    \end{equation}
    \begin{equation}\label{assumption_e_product}
        \EE_e\left(\la s,e \ra^2 \|e\|_q^2 \right) \leq \frac{6\rho_n}{n} \|s\|_2^2,     \quad \forall s \in \R^n.
    \end{equation}
		
\end{lemma}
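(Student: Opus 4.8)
The plan is to exploit the rotational structure of the uniform measure on the sphere through the Gaussian representation $e = y/\|y\|_2$ with $y\sim N(0,I_n)$, so that the radius $\|y\|_2$ and the direction $e$ are \emph{independent}. From $\|y\|_q=\|y\|_2\,\|e\|_q$ and $\la s,y\ra=\|y\|_2\,\la s,e\ra$, together with $\EE\|y\|_2^2=n$ and $\EE\|y\|_2^4=n(n+2)$, one obtains the exact identities
\[
\EE_e\|e\|_q^2=\tfrac{1}{n}\,\EE_y\|y\|_q^2,\qquad
\EE_e\!\left(\la s,e\ra^2\|e\|_q^2\right)=\tfrac{1}{n(n+2)}\,\EE_y\!\left(\la s,y\ra^2\|y\|_q^2\right).
\]
Hence \eqref{assumption_e_norm} is equivalent to $\EE_y\|y\|_q^2\le\rho_n n$, and \eqref{assumption_e_product} reduces to $\EE_y(\la s,y\ra^2\|y\|_q^2)\le 6\rho_n(n+2)\|s\|_2^2$; both are now Gaussian moment estimates, which I would treat in turn.

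For the first estimate I would argue separately for the two constants defining $\rho_n$. When $q<\infty$, concavity of $t\mapsto t^{2/q}$ and Jensen's inequality give $\EE\|y\|_q^2\le(\EE\sum_i|y_i|^q)^{2/q}=(n\,\EE|y_1|^q)^{2/q}$, and inserting the Gaussian absolute moment $\EE|y_1|^q=2^{q/2}\Gamma(\tfrac{q+1}{2})/\sqrt{\pi}$ reduces the desired bound $\EE\|y\|_q^2\le(q-1)n^{2/q}$ to the scalar inequality $\Gamma(\tfrac{q+1}{2})\le\sqrt{\pi}\,(\tfrac{q-1}{2})^{q/2}$, valid for all $q\ge2$ and tight at $q=2$. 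For the second constant, which also covers $q=\infty$, I would use $\|y\|_q\le n^{1/q}\|y\|_\infty$ together with the maximal inequality $\EE\|y\|_\infty^2=\EE\max_i y_i^2\le\int_0^\infty\min\{1,n e^{-t/2}\}\,dt=2\ln n+2$, which is at most $16\ln n-8$ once $n\ge8$; this yields $\EE\|y\|_q^2\le n^{2/q}(16\ln n-8)$. Taking the smaller of the two constants gives $\EE_y\|y\|_q^2\le\rho_n n$.

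For the product moment, by homogeneity I may take $\|s\|_2=1$ and decompose $y=Zs+w$, where $Z=\la s,y\ra\sim N(0,1)$ and $w=(I-ss^\top)y$ is supported on $s^\perp$; the crucial point is that $Z$ and $w$ are independent. Since $q\ge2$ implies $\|s\|_q\le\|s\|_2=1$, the triangle inequality gives $\|y\|_q\le|Z|+\|w\|_q$, hence $\|y\|_q^2\le2Z^2+2\|w\|_q^2$, and therefore $\EE(Z^2\|y\|_q^2)\le2\,\EE Z^4+2\,\EE Z^2\cdot\EE\|w\|_q^2=6+2\,\EE\|w\|_q^2$. To control $\EE\|w\|_q^2$ I would use the reflection $y'=(I-2ss^\top)y$, which is again $N(0,I_n)$ and satisfies $w=\tfrac12(y+y')$, so that $\|w\|_q^2\le\tfrac12(\|y\|_q^2+\|y'\|_q^2)$ and $\EE\|w\|_q^2\le\EE\|y\|_q^2\le\rho_n n$ by the previous step. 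Combining, $\EE_y(\la s,y\ra^2\|y\|_q^2)\le6+2\rho_n n$ for $\|s\|_2=1$, and \eqref{assumption_e_product} follows as soon as $6+2\rho_n n\le6\rho_n(n+2)$, i.e. $\rho_n(2n+6)\ge3$; this is checked by a short case split, e.g. using $\rho_n\ge\tfrac32 n^{-1}$ when $q\ge\tfrac52$ and $\rho_n\ge n^{-1/5}$ when $q<\tfrac52$, both valid for $n\ge8$.

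The probabilistic core here — the Gaussian representation, the independence of radius and direction, and the reflection identity $w=\tfrac12(y+y')$ — is short, so the main obstacle is the bookkeeping of constants. Concretely, one must establish the Gamma-function inequality $\Gamma(\tfrac{q+1}{2})\le\sqrt{\pi}\,(\tfrac{q-1}{2})^{q/2}$ uniformly in $q\ge2$ (most cleanly from log-convexity of $\Gamma$ or a Gautschi/Kershaw-type estimate, the bound being sharp at $q=2$), and then verify that the constant $6$ in \eqref{assumption_e_product}, the constant $16\ln n-8$ inside $\rho_n$, and the hypothesis $n\ge8$ are mutually compatible, so that both displayed bounds close at the same time.
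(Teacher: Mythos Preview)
The paper does not prove this lemma at all: it is stated with the remark that it ``was proved in \cite{dvurechensky2018accelerated}'', so there is no in-paper argument to compare against. Your proposal is therefore an independent, self-contained proof, and as far as I can see it is correct.

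A few remarks on the details. The Gaussian radial--angular decoupling and the resulting identities $\EE_e\|e\|_q^2=\tfrac{1}{n}\EE_y\|y\|_q^2$ and $\EE_e(\la s,e\ra^2\|e\|_q^2)=\tfrac{1}{n(n+2)}\EE_y(\la s,y\ra^2\|y\|_q^2)$ are exactly right, as are the independence of $Z=\la s,y\ra$ and $w=(I-ss^\top)y$ and the reflection trick $w=\tfrac12(y+y')$. Your tail bound $P(y_1^2>t)\le e^{-t/2}$ for a standard normal $y_1$ is in fact sharp at $t=0$ and valid for all $t\ge0$ (e.g.\ via the substitution $x=\sqrt{t+u}$ in the Gaussian integral), so the maximal inequality $\EE\max_i y_i^2\le 2\ln n+2$ goes through as written; this leaves a lot of slack against $16\ln n-8$ for $n\ge 8$. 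The one step that still needs to be written out carefully is the scalar Gamma inequality $\Gamma(\tfrac{q+1}{2})\le\sqrt{\pi}\,(\tfrac{q-1}{2})^{q/2}$ for $q\ge2$; it is tight at $q=2$ and follows, for instance, from the refined Stirling bound $\Gamma(x+1)\le\sqrt{2\pi}\,x^{x+1/2}e^{-x+1/(12x)}$ together with a direct check on the short interval near $x=\tfrac12$, or from log-convexity as you indicate. The final numerical closure $\rho_n(2n+6)\ge3$ via the case split $q\gtrless\tfrac52$ is fine.
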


\noindent \textbf{Stochastic approximation of the gradient.} 
Based on the noisy stochastic observations \eqref{eq:tf_def} of the directional derivative, we form the following stochastic approximation of $\nabla f(x)$
\begin{equation}
\label{eq:MiniBatcStocGrad}
\tnmf(x) = \frac{1}{m}\sum\limits_{i=1}^m\tf'(x,\xi_i,e)e,
\end{equation}
where $e \in RS_2(1)$, $\xi_i$, $i=1,...,m$ are independent realizations of $\xi$, $m$ is the \textit{batch size}.

\subsection{Algorithms and main results for convex problems}
Our Accelerated Randomized Directional Derivative (ARDD) method is listed as Algorithm~\ref{Alg:ARDS}.
\begin{algorithm}
	\caption{Accelerated Randomized Directional Derivative (ARDD) method}
	\label{Alg:ARDS}
	\begin{algorithmic}[1]
		\REQUIRE $x_0$~---starting point; $N \geqslant 1$~--- number of iterations; $m \geqslant 1$~--- batch size.
		\ENSURE point $y_N$.
		\STATE $y_0 \leftarrow x_0, \, z_0 \leftarrow x_0$.
		\FOR{$k=0,\, \dots, \, N-1$.}
		\STATE $\alpha_{k+1} \leftarrow \frac{k+2}{96n^2\rho_n L_2}, \, \tau_{k} \leftarrow \frac{1}{48\alpha_{k+1}n^2\rho_n L_2} = \frac{2}{k+2}$.
		\STATE Generate $e_{k+1} \in RS_2(1)$ independently from previous iterations and $\xi_i$, $i=1,...,m$ -- independent realizations of $\xi$. 
		\STATE Calculate 
		$$
		\tnmf(x_{k+1})= \frac{1}{m}\sum\limits_{i=1}^m\tf'(x_{k+1},\xi_i,e)e.
		$$
		\STATE $x_{k+1} \leftarrow \tau_kz_k + (1-\tau_k)y_k $.
		\STATE $y_{k+1} \leftarrow x_{k+1}-\frac{1}{2L_2}\tnmf(x_{k+1})$.
		\STATE $z_{k+1} \leftarrow \argmin\limits_{z\in\R^n} \left\{ {\alpha_{k+1} n \left\langle \tnmf(x_{k+1}), \, z-z_k \right\rangle +V[z_{k}] \left( z \right)}\right\}$.
		\ENDFOR
		\RETURN $y_N$
	\end{algorithmic}
\end{algorithm}
%
\begin{theorem}
\label{Th:ARDFDSConv}
    Let ARDD method be applied to solve problem \eqref{eq:PrSt}. Then
    \begin{equation}\label{eq:ARDFDSConv}
        \begin{array}{rl}
        \EE[f(y_N)] - f(x^*)
        \leqslant \frac{384\Theta_p n^2\rho_nL_2}{N^2} 
        + \frac{4N}{nL_2}\cdot\frac{\sigma^2}{m} + \frac{61N}{24L_2}\Delta_\zeta + \frac{122N}{3L_2}\Delta_\eta^2\\
        + \frac{12\sqrt{2n\Theta_p}}{N^2}\left(\frac{\sqrt{\Delta_\zeta}}{2}+ 2\Delta_\eta\right) + \frac{N^2}{12n\rho_nL_2} \left(\frac{\sqrt{\Delta_\zeta}}{2} 
        + 2\Delta_\eta\right)^2,
    \end{array}
    \end{equation}
    where $\Theta_p = V[z_0](x^*)$ is defined by the chosen proximal setup and  $\EE[\cdot] = \EE_{e_1,\ldots,e_N,\xi_{1,1},\ldots,\xi_{N,m}}[\cdot]$.
\end{theorem}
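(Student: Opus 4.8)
The plan is to follow the classical Nesterov-style estimating-sequence / linear-coupling analysis for accelerated gradient methods, but carefully tracking the three sources of error: the stochastic variance $\sigma^2/m$ coming from the mini-batch, the stochastic noise $\Delta_\zeta$, and the adversarial bounded noise $\Delta_\eta$. First I would decompose the inexact stochastic gradient estimate as
\[
\widetilde{\nabla}^m f(x_{k+1}) = n\left(\langle \nabla f(x_{k+1}), e_{k+1}\rangle e_{k+1}\right) + \text{(zero-mean stochastic part)} + \text{(bias/noise part)},
\]
after normalizing by the factor $n$ that appears in the mirror step. The key point, using Lemma \ref{Lm:MainTechLM} (equation \eqref{assumption_e_product}), is that $\EE_e[n\langle \nabla f(x),e\rangle e] = \nabla f(x)$ and that the second moment $\EE\|\widetilde{\nabla}^m f(x)\|_q^2$ (and $\|\widetilde{\nabla}^m f(x)\|_2^2$ in the $y$-step) is controlled by $n^2\rho_n$ times $\|\nabla f(x)\|_2^2$ plus variance terms $\propto \sigma^2/m + \Delta_\zeta + \Delta_\eta^2$. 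This is what forces the step sizes $\alpha_{k+1}, \tau_k$ to carry the $n^2\rho_n L_2$ factor.

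The core of the argument is a one-step inequality. For the gradient step $y_{k+1} = x_{k+1} - \frac{1}{2L_2}\widetilde{\nabla}^m f(x_{k+1})$, I would use $L_2$-smoothness of $f$ to write $f(y_{k+1}) \le f(x_{k+1}) - \frac{1}{2L_2}\langle \nabla f(x_{k+1}), \widetilde{\nabla}^m f(x_{k+1})\rangle + \frac{1}{4L_2}\|\widetilde{\nabla}^m f(x_{k+1})\|_2^2$ (plus a cross term with the noise). For the mirror-descent step $z_{k+1}$, the standard prox-inequality together with \eqref{eq:VStrConv} gives, for any $u$,
\[
\alpha_{k+1} n \langle \widetilde{\nabla}^m f(x_{k+1}), z_k - u\rangle \le V[z_k](u) - V[z_{k+1}](u) + \frac{\alpha_{k+1}^2 n^2}{2}\|\widetilde{\nabla}^m f(x_{k+1})\|_q^2.
\]
Then I would combine these with the coupling identity $x_{k+1} = \tau_k z_k + (1-\tau_k) y_k$, take $u = x^*$, take conditional expectation over $e_{k+1}$ and $\xi_i$ to replace $\widetilde{\nabla}^m f(x_{k+1})$ by $\nabla f(x_{k+1})$ in the linear terms, use convexity $\langle \nabla f(x_{k+1}), x_{k+1}-x^*\rangle \ge f(x_{k+1}) - f(x^*)$, and choose the relation between $\alpha_{k+1}$, $\tau_k$ and $\alpha_k$ (namely $\alpha_{k+1}^2 n^2 \cdot 48 \rho_n L_2 \approx \alpha_{k+1}/\tau_k$ and the telescoping $\alpha_{k+1} - \alpha_k$ matching) so that the $f$-values telescope into a quantity like $A_N(\EE f(y_N) - f(x^*))$ with $A_N = \sum \alpha_k \asymp N^2/(n^2\rho_n L_2)$. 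Dividing by $A_N$ produces the leading term $\frac{384\Theta_p n^2\rho_n L_2}{N^2}$ and the noise terms are the accumulated error contributions $\sum_k \alpha_{k+1} \cdot (\text{noise})$ and $\sum_k \alpha_{k+1}^2 n^2 \cdot(\text{noise})$, divided by $A_N$, which after plugging $\alpha_{k+1} = (k+2)/(96 n^2\rho_n L_2)$ and summing over $k$ give exactly the $\frac{4N}{nL_2}\cdot\frac{\sigma^2}{m}$, $\frac{61N}{24L_2}\Delta_\zeta$, $\frac{122N}{3L_2}\Delta_\eta^2$, and the $1/N^2$ and $N^2$ cross-terms involving $\sqrt{\Delta_\zeta}/2 + 2\Delta_\eta$.

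The main obstacle I expect is bookkeeping the bounded-noise term $\Delta_\eta$ correctly: unlike the zero-mean part, $\eta$ does not vanish under expectation, so it contributes both a linear-in-$\eta$ term (bounded by Cauchy–Schwarz against $\|z_k - x^*\|_p$ or $\|\nabla f\|$, which is where the $\sqrt{\Theta_p}$ and the $\sqrt{\Delta_\zeta}/2 + 2\Delta_\eta$ combination comes from) and a quadratic-in-$\eta$ term. Handling the linear term requires bounding $\|z_k - x^*\|_p$ uniformly, which I would do by absorbing it into $V[z_k](x^*)$ via Young's inequality, at the cost of the extra $\frac{N^2}{12 n\rho_n L_2}(\ldots)^2$ term; getting the constants to close simultaneously with the step-size choices is the delicate part. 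The $p=1$ versus $p=2$ cases are handled uniformly since the only place the proximal setup enters is through $\Theta_p = V[z_0](x^*)$ and the strong-convexity constant $1$, with the dimension dependence already absorbed into $\rho_n$ via Lemma \ref{Lm:MainTechLM}. Everything else is routine summation of arithmetic and quadratic series in $k$.
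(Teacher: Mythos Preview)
Your outline matches the paper's proof almost exactly: the paper also splits the analysis into (i) a gradient-step descent lemma for $y_{k+1}=x_{k+1}-\tfrac{1}{2L_2}\tnmf(x_{k+1})$, (ii) the standard mirror-descent three-point inequality for $z_{k+1}$, (iii) the linear-coupling relation $\tau_k(x_{k+1}-z_k)=(1-\tau_k)(y_k-x_{k+1})$, and then telescopes using the identity $48n^2\rho_nL_2\alpha_{k+1}^2-\alpha_{k+1}=48n^2\rho_nL_2\alpha_k^2-\tfrac{1}{192n^2\rho_nL_2}$. The moment bounds you invoke from Lemma~\ref{Lm:MainTechLM} are exactly what the paper packages as Lemmas~\ref{Lm:MinBatcToGrad} and~\ref{Lm:GradStep}, leading to the abstract inequalities \eqref{eq:AssumInPr}--\eqref{eq:AssumQNorm} with the specific $\delta_1,\delta_2$ of Lemma~\ref{Lm:ARDFDSDeltaMeaning}.

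The one place your plan diverges, and where it would not close as written, is the treatment of the bias term $\alpha_{k+1}n\delta_1\,\EE\|z_k-x^*\|_p$. You propose to ``absorb it into $V[z_k](x^*)$ via Young's inequality.'' But Young's here gives $\alpha_{k+1}n\delta_1 R_k\le c_k V[z_k](x^*)+\tfrac{(\alpha_{k+1}n\delta_1)^2}{2c_k}$, which turns the clean telescope $V[z_k]-V[z_{k+1}]$ into $(1-c_k)V[z_k]-V[z_{k+1}]$; with $c_k$ constant this blows up geometrically, and with $c_k\to 0$ you lose control of the quadratic residual. The paper avoids this circularity by a different device: after telescoping it reads off the \emph{self-referential} inequality
\[
\tfrac{1}{2}R_l^2\le \EE V[z_l](x^*)\le B_l+\zeta_1\sum_{k=1}^{l-1}\alpha_{k+1}R_k,
\qquad B_l=\Theta_p+\zeta_2\sum_{k=0}^{l-1}\alpha_{k+1}^2+\tfrac{\sqrt{2\Theta_p}\,\zeta_1}{48n^2\rho_nL_2},
\]
and then solves this recursion for $R_l$ by a dedicated induction (the appendix Lemma~\ref{stoh:technical_lemma}), obtaining $B_l+\zeta_1\sum\alpha_{k+1}R_k\le\big(\sqrt{B_l}+\sqrt{2}\zeta_1\,l^2/(96n^2\rho_nL_2)\big)^2$. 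The $\sqrt{\Theta_p}$ cross-term in \eqref{eq:ARDFDSConv} comes precisely from isolating the $k=0$ contribution $\zeta_1\alpha_1 R_0\le \zeta_1\alpha_1\sqrt{2\Theta_p}$ inside $B_l$, not from any Young's split. So the step you flagged as ``the delicate part'' is indeed delicate, but the fix is a recursion lemma rather than Young's inequality; with that substitution your plan is the paper's proof.
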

Before we proceed to the non-accelerated method, we give the appropriate choice of the ARDD method parameters $N$, $m$, and accuracy of the directional derivative evaluation $\Delta_\zeta$, $\Delta_\eta$. These values are chosen such that the r.h.s. of \eqref{eq:ARDFDSConv} is smaller than $\e$. For simplicity we omit numerical constants and summarize the obtained values of the algorithm parameters in Table 1 below. The last row represents the total number $Nm$ of oracle calls, that is, the number of directional derivative evaluations, which was advertised in \eqref{eq:ARDFDSComplInformal}. \rev2{Note that the bound \eqref{eq:ARDFDSConv} allows also to choose the accuracy of the directional derivative evaluation $\Delta_\zeta$, $\Delta_\eta$ decreasing with $N$. This is done by making each term with  $\Delta_\zeta$ or $\Delta_\eta$ in the r.h.s. to be of the same order as the first term.}
{
\renewcommand{\arraystretch}{2}
\begin{table}[h]
    \centering
    {\small
    \begin{tabular}{|c|c|c|}
        \hline
         & $p=1$ & $p=2$ \\
        \hline
        $N$ & $O\left(\sqrt{\frac{ n\ln nL_2 \Theta_1}{\varepsilon}}\right)$ & $O\left(\sqrt{\frac{ n^{2}L_2\Theta_2}{\varepsilon}}\right)$ \\
        \hline
        $m$ & $O\left(\max\left\{1,\sqrt{\frac{\ln n}{n}}\cdot\frac{\sigma^2}{\varepsilon^{3/2}}\cdot\sqrt{\frac{\Theta_1}{L_2}}\right\}\right)$ & $O\left(\max\left\{1,\frac{\sigma^2}{\varepsilon^{3/2}}\cdot\sqrt{\frac{\Theta_2}{L_2}}\right\}\right)$\\
        \hline
        $\Delta_\zeta$ & $O\left(\min\left\{n(\ln n)^2L_2^2\Theta_1,\, \frac{\varepsilon^2}{n\Theta_1},\, \frac{\varepsilon^{\frac{3}{2}}}{\sqrt{n\ln n}}\cdot\sqrt{\frac{L_2}{\Theta_1}}\right\}\right)$ & $O\left(\min\left\{n^3L_2^2\Theta_2,\, \frac{\varepsilon^2}{n\Theta_2},\, \frac{\varepsilon^{\frac{3}{2}}}{n}\cdot\sqrt{\frac{L_2}{\Theta_2}}\right\}\right)$\\
        \hline
        $\Delta_\eta$ & $O\left(\min\left\{\sqrt{n}\ln nL_2\sqrt{\Theta_1},\, \frac{\varepsilon}{\sqrt{n\Theta_1}},\, \frac{\varepsilon^{\frac{3}{4}}}{\sqrt[4]{n\ln n}}\cdot\sqrt[4]{\frac{L_2}{\Theta_1}}\right\}\right)$ & $O\left(\min\left\{n^{\frac{3}{2}}L_2\sqrt{\Theta_2},\, \frac{\varepsilon}{\sqrt{n\Theta_2}},\, \frac{\varepsilon^{\frac{3}{4}}}{\sqrt{n}}\cdot\sqrt[4]{\frac{L_2}{\Theta_2}}\right\}\right)$\\
        \hline
        O-le calls & $O\left(\max\left\{\sqrt{\frac{ n\ln nL_2\Theta_1}{\varepsilon}}, \frac{\sigma^2\Theta_1 \ln n}{\varepsilon^2}\right\}\right)$ & $O\left(\max\left\{\sqrt{\frac{ n^{2}L_2\Theta_2}{\varepsilon}}, \frac{\sigma^2\Theta_2 n}{\varepsilon^2}\right\}\right)$ \\
        \hline
    \end{tabular}}
    \caption{Algorithm~\ref{Alg:ARDS} parameters for the cases $p=1$ and $p=2$.}
    \label{tab:ARDD}
\end{table}
}

Our Randomized Directional Derivative (RDD) method is listed as Algorithm \ref{Alg:RDFDS}.
\begin{algorithm}
	\caption{Randomized Directional Derivative (RDD) method}
	\label{Alg:RDFDS}
	\begin{algorithmic}[1]
		\REQUIRE $x_0$~---starting point; $N \geqslant 1$~--- number of iterations; $m \geqslant 1$~--- batch size.
		\ENSURE point $\bar{x}_N$.
		\FOR{$k=0,\, \dots, \, N-1$.}
		\STATE $\alpha \leftarrow \frac{1}{48n\rho_n L_2}$.
		\STATE Generate $e_{k+1} \in RS_2\left( 1 \right)$ independently from previous iterations and $\xi_i$, $i=1,...,m$ -- independent realizations of $\xi$. 
		\STATE Calculate 
		$$
		\tnmf(x_{k})= \frac{1}{m}\sum\limits_{i=1}^m\tf'(x_{k},\xi_i,e)e.
		$$
		\STATE $x_{k+1} \leftarrow \argmin\limits_{x\in\R^n} \left\{ {\alpha n \left\langle \tnmf(x_{k}), \, x-x_k \right\rangle +V[x_{k}] \left( x \right)}\right\}$.
		\ENDFOR
		\RETURN $\bar{x}_N \leftarrow \frac{1}{N}\sum\limits_{k=0}^{N-1}x_k$
	\end{algorithmic}
\end{algorithm}

\begin{theorem}\label{theorem_convergence_mini_gr_free_non_acc}
    Let RDD method 
    be applied to solve problem \eqref{eq:PrSt}. Then
    \begin{equation}\label{theo_main_result_mini_gr_free}
        \begin{array}{rl}
            \EE[f(\bar{x}_N)] - f(x_*) \leqslant \frac{384n\rho_nL_2\Theta_p}{N} + \frac{2}{L_2}\frac{\sigma^2}{m}+ \frac{n}{12L_2}\Delta_\zeta + \frac{4n}{3L_2}\Delta_\eta^2 + \frac{8\sqrt{2n\Theta_p}}{N}\left(\frac{\sqrt{\Delta_\zeta}}{2} + 2\Delta_\eta\right) \\
            + \frac{N}{3L_2\rho_n}\left(\frac{\sqrt{\Delta_\zeta}}{2} + 2\Delta_\eta\right)^2, 
        \end{array}
    \end{equation}
     where $\Theta_p = V[z_0](x^*)$ is defined by the chosen proximal setup and  $\EE[\cdot] = \EE_{e_1,\ldots,e_N,\xi_{1,1},\ldots,\xi_{N,m}}[\cdot]$.
\end{theorem}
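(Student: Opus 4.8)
The plan is to treat Algorithm~\ref{Alg:RDFDS} as an inexact, biased stochastic mirror descent with steps taken with respect to the prox-function $d$, and to run the standard telescoping estimate; the two non-standard ingredients are a second-moment bound for the randomized surrogate gradient (obtained from Lemma~\ref{Lm:MainTechLM}) and a careful treatment of the deterministic part of the oracle error. Throughout I set $r_k := n\,\tnmf(x_k)$, so that line~6 of the algorithm reads $x_{k+1} = \argmin_{x\in\R^n}\{\alpha\la r_k,x-x_k\ra + V[x_k](x)\}$. Writing $\tf'(x_k,\xi_i,e_{k+1}) = \la g(x_k,\xi_i),e_{k+1}\ra + \zeta_i + \eta_i$ as in \eqref{eq:tf_def} and using $\EE_e[\la s,e\ra e] = \tfrac1n s$ for $e\in RS_2(1)$, the conditional expectation given the past is $\EE[r_k\mid x_k] = \nabla f(x_k) + b_k$ with bias $b_k = n\,\EE[(\bar\zeta+\bar\eta)e_{k+1}]$ ($\bar\zeta,\bar\eta$ the batch averages of the $\zeta_i,\eta_i$), and I decompose $r_k = \nabla f(x_k) + b_k + \theta_k$, where $\theta_k := r_k - \EE[r_k\mid x_k]$ is a martingale-difference term.

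First I would establish the second-moment bound. Expanding $\|r_k\|_q^2 = n^2\big(\tfrac1m\sum_i(\la g(x_k,\xi_i),e_{k+1}\ra + \zeta_i+\eta_i)\big)^2\|e_{k+1}\|_q^2$, I split the batch average of directional derivatives into its mean $\la\nabla f(x_k),e_{k+1}\ra$ plus the $\xi$-fluctuation (zero-mean, with variance reduced by $m$ thanks to independence of the $\xi_i$), apply \eqref{assumption_e_product} with $s=\nabla f(x_k)$ and with $s=g(x_k,\xi_i)-\nabla f(x_k)$ to the two inner-product pieces, and \eqref{assumption_e_norm} to the $\zeta,\eta$ pieces. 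Combined with $\EE_\xi\|g(x_k,\xi)-\nabla f(x_k)\|_2^2\le\sigma^2$ and the smoothness/optimality inequality $\|\nabla f(x_k)\|_2^2\le 2L_2(f(x_k)-f(x^*))$, this gives
\[
\EE\big[\|r_k\|_q^2\mid x_k\big]\ \le\ c_1 n\rho_n L_2\big(f(x_k)-f(x^*)\big) + c_1\frac{n\rho_n\sigma^2}{m} + c_2 n^2\rho_n\Big(\tfrac{\sqrt{\Delta_\zeta}}{2}+2\Delta_\eta\Big)^2
\]
for absolute constants $c_1,c_2$ (the $n^2\rho_n$, rather than $n\rho_n$, in front of the $\Delta_\zeta,\Delta_\eta$ terms is forced because the $\zeta_i,\eta_i$ are not inner products against $e_{k+1}$, so only \eqref{assumption_e_norm} is available there).

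Next I would telescope. The prox-optimality of $x_{k+1}$ gives the three-point inequality $\alpha\la r_k,x_{k+1}-x^*\ra\le V[x_k](x^*)-V[x_{k+1}](x^*)-V[x_k](x_{k+1})$; adding $\alpha\la r_k,x_k-x_{k+1}\ra\le\tfrac{\alpha^2}{2}\|r_k\|_q^2+\tfrac12\|x_k-x_{k+1}\|_p^2$ and cancelling $V[x_k](x_{k+1})$ via \eqref{eq:VStrConv} yields $\alpha\la r_k,x_k-x^*\ra\le\tfrac{\alpha^2}{2}\|r_k\|_q^2+V[x_k](x^*)-V[x_{k+1}](x^*)$. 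Substituting $\la r_k,x_k-x^*\ra=\la\nabla f(x_k),x_k-x^*\ra+\la b_k,x_k-x^*\ra+\la\theta_k,x_k-x^*\ra$, using $\la\nabla f(x_k),x_k-x^*\ra\ge f(x_k)-f(x^*)$ by convexity, taking total expectation (the $\theta_k$ term vanishes since $x_k$ is independent of the fresh randomness in $\theta_k$ and $\EE[\theta_k\mid x_k]=0$), summing over $k=0,\dots,N-1$, inserting the second-moment bound, and choosing $\alpha = \tfrac1{48n\rho_n L_2}\asymp(n\rho_n L_2)^{-1}$ so that the $(f(x_k)-f(x^*))$ part of $\tfrac{\alpha^2}{2}\EE\|r_k\|_q^2$ is absorbed on the left, one is left with
\[
\tfrac{\alpha}{2}\sum_{k=0}^{N-1}\big(\EE f(x_k)-f(x^*)\big)\ \le\ \Theta_p + c_3\,\alpha^2 N\Big(\tfrac{n\rho_n\sigma^2}{m}+n^2\rho_n(\tfrac{\sqrt{\Delta_\zeta}}{2}+2\Delta_\eta)^2\Big) + \alpha\sum_{k=0}^{N-1}\EE\big[\la b_k,x_k-x^*\ra\big].
\]
Dividing by $\tfrac{\alpha N}{2}$ and applying Jensen, $\EE f(\bar x_N)-f(x^*)\le\tfrac1N\sum_k(\EE f(x_k)-f(x^*))$, then turns this into \eqref{theo_main_result_mini_gr_free} once the last sum is estimated, and reproduces the $\tfrac{n\rho_n L_2\Theta_p}{N}$, $\tfrac{\sigma^2}{L_2 m}$, $\tfrac{n}{L_2}\Delta_\zeta$ and $\tfrac{n}{L_2}\Delta_\eta^2$ contributions.

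The main obstacle is precisely that last sum: the iterates are not confined to a bounded set a priori, so $\|x_k-x^*\|_p$ must be controlled. I would use two facts. Since $e_{k+1}\in RS_2(1)$, the bias acts through an inner product of $e_{k+1}$ with $x_k-x^*$, and by Cauchy--Schwarz together with the elementary identity $\EE_e\la e,v\ra^2=\tfrac1n\|v\|_2^2$ one gets $|\EE[\la b_k,x_k-x^*\ra]|\le\sqrt n\,\big(\tfrac{\sqrt{\Delta_\zeta}}{2}+2\Delta_\eta\big)\sqrt{\EE\|x_k-x^*\|_2^2}\le\sqrt n\,\big(\tfrac{\sqrt{\Delta_\zeta}}{2}+2\Delta_\eta\big)\sqrt{2\,\EE V[x_k](x^*)}$ — here measuring $x_k-x^*$ in the Euclidean norm (rather than $\|e_{k+1}\|_q\|x_k-x^*\|_p$) is what keeps the dimension factor at $\sqrt n$, in particular for $p=1$. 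Second, reading the telescoped inequality as a recursion for $\EE V[x_k](x^*)$ — dropping the nonnegative $f(x_k)-f(x^*)$ terms and applying Young's inequality to $\la b_k,x_k-x^*\ra$ with a free parameter $\lambda$ — gives $\EE V[x_{k+1}](x^*)\le(1+O(\alpha/\lambda))\,\EE V[x_k](x^*)+(\text{noise})$, so that for $\lambda$ of order $\alpha N$ each $\EE V[x_k](x^*)$, hence $\EE\|x_k-x^*\|_p^2$, is $O(\Theta_p)$ plus lower-order terms. Feeding this back and optimizing $\lambda$ produces exactly the two bias-dependent terms $\tfrac{8\sqrt{2n\Theta_p}}{N}\big(\tfrac{\sqrt{\Delta_\zeta}}{2}+2\Delta_\eta\big)$ and $\tfrac{N}{3L_2\rho_n}\big(\tfrac{\sqrt{\Delta_\zeta}}{2}+2\Delta_\eta\big)^2$; the rest is the routine bookkeeping of collecting numerical constants, which I would not belabor.
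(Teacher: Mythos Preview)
Your approach is structurally the same as the paper's: the paper also reduces the proof to two abstract one-step inequalities --- an inner-product bias bound (your $b_k$ estimate, their \eqref{eq:AssumDelta1_nonAcc}) and a $q$-norm second-moment bound (your first display, their \eqref{eq:AssumDelta2_nonAcc}) --- verifies them via Lemma~\ref{Lm:MainTechLM} together with $\|\nabla f(x)\|_2^2\le 2L_2(f(x)-f(x^*))$, and then runs the mirror-descent three-point estimate (Lemma~\ref{Lm:MDStep}) and telescopes.

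The only substantive difference is how the bias-times-distance sum $\alpha n\delta_1\sum_k\EE\|x_k-x^*\|_p$ is controlled on an unbounded domain. You propose Young's inequality with a free parameter $\lambda$ in the one-step recursion for $\EE V[x_k](x^*)$. The paper instead proves a dedicated inductive lemma (Lemma~\ref{stoh:technical_lemma_non_acc}): if $R_l\le\sqrt{2}\big(\sum_{k<l}a_k+b\alpha\sum_{1\le k<l}R_k\big)^{1/2}$ for all $l$, then the whole sum is at most $\big(\sqrt{\sum_k a_k}+\sqrt{2}\,b\alpha l\big)^2$, after which $(A+B)^2\le 2A^2+2B^2$ finishes. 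Both routes work, but they do not give literally the same inequality. In the paper's version the term $\tfrac{8\sqrt{2n\Theta_p}}{N}\big(\tfrac{\sqrt{\Delta_\zeta}}{2}+2\Delta_\eta\big)$ arises solely from the $k{=}0$ contribution $\alpha n\delta_1R_0\le\alpha n\delta_1\sqrt{2\Theta_p}$; every later bias contribution is swallowed into the $\tfrac{N}{3L_2\rho_n}(\cdot)^2$ term via the square. Your Young-with-$\lambda\sim\alpha N$ argument produces instead an $N$-independent term of order $\sqrt{n\Theta_p}\big(\tfrac{\sqrt{\Delta_\zeta}}{2}+2\Delta_\eta\big)$ at that spot. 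This is harmless --- by AM--GM it is bounded by a constant times $\tfrac{n\rho_nL_2\Theta_p}{N}+\tfrac{N}{\rho_nL_2}(\cdot)^2$, so your final inequality is equivalent to \eqref{theo_main_result_mini_gr_free} up to absolute constants --- but it does not reproduce the stated $\tfrac{1}{N}$ scaling verbatim. If you want the theorem exactly as written, isolate the $k=0$ bias term before invoking the recursion, or use the paper's inductive lemma directly.
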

Before we proceed, we give the appropriate choice of the RDD method parameters $N$, $m$, and accuracy of the directional derivative evaluation $\Delta_\zeta$, $\Delta_\eta$. These values are chosen such that the r.h.s. of \eqref{theo_main_result_mini_gr_free} is smaller than $\e$. For simplicity we omit numerical constants and summarize the obtained values of the algorithm parameters in Table 2 below. The last row represents the total number $Nm$ of oracle calls, that is, the number of directional derivative evaluations, which was advertised in \eqref{eq:RDFDSComplInformal}. \rev2{Note that the bound \eqref{theo_main_result_mini_gr_free} allows also to choose the accuracy of the directional derivative evaluation $\Delta_\zeta$, $\Delta_\eta$ decreasing with $N$. This is done by making each term with  $\Delta_\zeta$ or $\Delta_\eta$ in the r.h.s. to be of the same order as the first term.}
{
\renewcommand{\arraystretch}{2}
\begin{table}[h]
    \centering
    \begin{tabular}{|c|c|c|}
        \hline
         & $p=1$ & $p=2$ \\
        \hline
        $N$ & $O\left(\frac{ L_2\Theta_1 \ln n}{\varepsilon}\right)$ & $O\left(\frac{n L_2\Theta_2}{\varepsilon}\right)$ \\
        \hline
        $m$ & $O\left(\max\left\{1,\frac{\sigma^2}{\varepsilon L_2}\right\}\right)$ & $O\left(\max\left\{1,\frac{\sigma^2}{\varepsilon L_2}\right\}\right)$\\
        \hline
        $\Delta_\zeta$ & $O\left(\min\left\{\frac{(\ln n)^2}{n}L_2^2\Theta_1,\, \frac{\varepsilon^2}{n\Theta_1},\, \frac{\varepsilon L_2}{n}\right\}\right)$ & $O\left(\min\left\{nL_2^2\Theta_2,\, \frac{\varepsilon^2}{n\Theta_2},\, \frac{\varepsilon L_2}{n}\right\}\right)$\\
        \hline
        $\Delta_\eta$ & $O\left(\min\left\{\frac{\ln n}{\sqrt{n}}L_2\sqrt{\Theta_1},\, \frac{\varepsilon}{\sqrt{n\Theta_1}},\, \sqrt{\frac{\varepsilon L_2}{n}}\right\}\right)$ & $O\left(\min\left\{\sqrt{n}L_2\sqrt{\Theta_2},\, \frac{\varepsilon}{\sqrt{n\Theta_2}},\, \sqrt{\frac{\varepsilon L_2}{n}}\right\}\right)$\\
        \hline
        O-le calls & $O\left(\max\left\{\frac{L_2\Theta_1 \ln n}{\varepsilon}, \frac{\sigma^2\Theta_1\ln n}{\varepsilon^2}\right\}\right)$ & $O\left(\max\left\{\frac{nL_2\Theta_2}{\varepsilon}, \frac{n\sigma^2\Theta_2 }{\varepsilon^2}\right\}\right)$\\
        \hline
    \end{tabular}
    \caption{Algorithm~\ref{Alg:RDFDS} parameters for the cases $p=1$ and $p=2$.}
    \label{tab:RDD}
\end{table}
}

\subsection{Extensions for strongly convex problems}
In this subsection, we assume additionally that $f$ is $\mu_p$-strongly convex w.r.t. $p$-norm.
Our algorithms and proofs rely on the following fact. Let $x_*$ be some fixed point and $x$ be a random point such that $\EE_x \big[ \| x-x_* \|_p^2 \big] \leqslant R_p^2$, then
\begin{equation}
    \EE_x  d\left( \frac{x-x_*}{R_p} \right)  \leqslant \frac{\Omega_p}{2},
		\label{eq:expdUpBound}
\end{equation}
where $\EE_x$ denotes the expectation with respect to random vector $x$ and $\Omega_p$ is defined as follows.
For $p=1$ and our choice of the prox-function \eqref{eq:dp1}, $\Omega_p = {\rm e}n^{(\kappa-1)(2-\kappa)/\kappa}\ln n = O(\ln n)$ for our choice of $\kappa=1 + \frac{1}{\ln n}$, see \cite{nemirovsky1983problem,juditsky2014deterministic}.
For $p=2$ and our choice of the prox-function \eqref{eq:dp2}, $\Omega_p = 1$.
Our Accelerated Randomized Directional Derivative method for strongly convex problems (ARDDsc)  is listed as Algorithm~\ref{ACDS_sc}.
\begin{algorithm}
	\caption{Accelerated Randomized Directional Derivative method for strongly convex functions  (ARDDsc)}\label{ACDS_sc}
	\begin{algorithmic}[1]
		\REQUIRE $x_0$~---starting point s.t. $\| x_0 - x_* \|_p^2 \leq R_p^2$; $K \geqslant 1$~--- number of iterations; $\mu_p$ -- strong convexity parameter.
		\ENSURE point $u_K$.		
		\STATE Set 
			\begin{equation}
				\label{eq:N0Def1}
				N_0 = \left\lceil \sqrt{\frac{8 a L_2\Omega_p }{\mu_p}}\right\rceil, 
			\end{equation}
			where $a=384n^2\rho_n$.
		\FOR{$k=0,\, \dots, \, K-1$}		
		\STATE Set
					  \begin{align}
							& m_k := \max \left\{1, \left\lceil \frac{8b \sigma^2 N_0 2^{k}}{L_2\mu_p R_p^2 } \right\rceil \right\}, \quad R_k^2 := R_p^2 2^{-k} + \frac{4 \Delta}{\mu_p} \left(1-2^{-k} \right),
						\end{align}
				where $b=\frac{4}{n}$.
		\STATE Set $d_k(x) = R_k^2d\left(\frac{x-u_k}{R_k}\right)$.
		\STATE Run ARDD with starting point $u_k$ and prox-function $d_k(x)$ for $N_0$ steps with batch size $m_k$. 
		\STATE Set $u_{k+1}=y_{N_0}$, $k=k+1$.		
		\ENDFOR
		\RETURN $u_K$
	\end{algorithmic}
\end{algorithm}

\begin{theorem}
\label{Th:ACDS_sc_rate}
Let $f$ in problem \eqref{eq:PrSt} be $\mu_p$-strongly convex and ARDDsc method be applied to solve this problem. Then
    \begin{equation}\label{eq:ARDFDSSConv}
    \begin{array}{rl}
    \EE f(u_K) - f^* \leqslant 
    \frac{\mu_p  R_p^2}{2} \cdot 2^{-K} + 2 \Delta .
    \end{array}
    \end{equation}
    where 
    $\Delta = \frac{61N_0}{24L_2}\Delta_\zeta + \frac{122N_0}{3L_2}\Delta_\eta^2
        + \frac{12\sqrt{2nR_p^2\Omega_p}}{N_0^2}\left(\frac{\sqrt{\Delta_\zeta}}{2}+ 2\Delta_\eta\right)
        + \frac{N_0^2}{12n\rho_nL_2} \left(\frac{\sqrt{\Delta_\zeta}}{2} 
        + 2\Delta_\eta\right)^2$.
    Moreover, under an appropriate choice of $\Delta_\zeta$ and $\Delta_\eta$ s.t. $2 \Delta \leqslant \e/2$, the oracle complexity to achieve $\e$-accuracy of the solution is
    $$
\widetilde{O}\left(\max\left\{n^{\frac12+\frac{1}{q}}\sqrt{\frac{L_2\Omega_p }{\mu_p}}\log_2 \frac{\mu_p R_p^2 }{ \e},\frac{n^{\frac{2}{q}}\sigma^2 \Omega_p}{\mu_p \e}\right\}\right).
    $$
\end{theorem}
\rev2{Despite we have linear convergence in terms of the iterations number, the number of the oracle evaluations corresponds to sublinear convergence. The reason is that we consider general stochastic optimization problem, rather than finite-sum problems for which the linear convergence rate is achievable in terms of the oracle evaluations \cite{allen2016katyusha}.
Our oracle complexity corresponds to the lower complexity bounds \cite{nemirovsky1983problem} for general stochastic convex optimization.}

Before we proceed to the non-accelerated method, we give the appropriate choice of the accuracy of the directional derivative evaluation $\Delta_\zeta$, $\Delta_\eta$ for ARDDsc to achieve an accuracy $\e$ of the solution. These values are chosen such that the r.h.s. of \eqref{eq:ARDFDSSConv} is smaller than $\e$. For simplicity we omit numerical constants and summarize the obtained values of the algorithm parameters in Table 3 below. The last row represents the total number of oracle calls, that is, the number of directional derivative evaluations, which was stated in \eqref{eq:SCBoundsInformal}.
{
\renewcommand{\arraystretch}{2}
\begin{table}[h]
\label{tab:ACDS_sc}
    \centering
    {\scriptsize
    \begin{tabular}{|c|c|c|}
        \hline
         & $p=1$ & $p=2$ \\
        \hline
        $\Delta_\zeta$ & $O\left(\min\left\{\varepsilon\sqrt{\frac{L_2\mu_1}{n\ln n\Omega_1}},\, \varepsilon^2\frac{n(\ln n)^2 L_2^2\Omega_1}{R_1^2\mu_1^2},\, \varepsilon\cdot\frac{\mu_1}{n\Omega_1}\right\}\right)$ & $O\left(\min\left\{\varepsilon\sqrt{\frac{L_2\mu_2}{n^2\Omega_2}},\, \varepsilon^2\frac{n^3 L_2^2\Omega_2}{R_2^2\mu_2^2},\, \varepsilon\cdot\frac{\mu_2}{n\Omega_2}\right\}\right)$\\
        \hline
        $\Delta_\eta$ & $O\left(\min\left\{\sqrt{\varepsilon}\sqrt[4]{\frac{L_2\mu_1}{n\ln n\Omega_1}},\, \varepsilon\frac{\sqrt{n}\ln n L_2\sqrt{\Omega_1}}{R_1\mu_1},\, \sqrt{\varepsilon}\cdot\sqrt{\frac{\mu_1}{n\Omega_1}}\right\}\right)$ & $O\left(\min\left\{\sqrt{\varepsilon}\sqrt[4]{\frac{L_2\mu_2}{n^2\Omega_2}},\, \varepsilon\frac{\sqrt{n^3} L_2\sqrt{\Omega_2}}{R_2\mu_2},\, \sqrt{\varepsilon}\cdot\sqrt{\frac{\mu_2}{n\Omega_2}}\right\}\right)$\\
        \hline
        O-le calls & $\widetilde{O}\left(\max\left\{\sqrt{\frac{n \ln n L_2\Omega_1 }{\mu_1}}\log_2 \frac{\mu_1 R_1^2 }{ \e},\frac{\sigma^2 \Omega_1 \ln n}{\mu_1 \e}\right\}\right)$ & $\widetilde{O}\left(\max\left\{n\sqrt{\frac{L_2\Omega_2 }{\mu_2}}\log_2 \frac{\mu_2 R_2^2 }{ \e},\frac{n\sigma^2 \Omega_2}{\mu_2 \e}\right\}\right)$\\
        \hline
    \end{tabular}}
    \caption{Algorithm~\ref{ACDS_sc} parameters for the cases $p=1$ and $p=2$.}
\end{table}
}

Our Randomized Directional Derivative method for strongly convex problems (RDDsc)  is listed as Algorithm~\ref{CDS_sc}.
\begin{algorithm}
	\caption{Randomized Directional Derivative method for strongly convex functions  (RDDsc)}\label{CDS_sc}
	\begin{algorithmic}[1]
		\REQUIRE $x_0$~---starting point s.t. $\| x_0 - x_* \|_p^2 \leq R_p^2$; $K \geqslant 1$~--- number of iterations; $\mu_p$ -- strong convexity parameter.
		\ENSURE point $u_K$.		
		\STATE Set 
			\begin{equation}
				\label{eq:N0Def2}
				N_0 = \left\lceil \frac{8 a L_2\Omega_p }{\mu_p}\right\rceil, 
			\end{equation}
		where $a = 384n\rho_n$.
		\FOR{$k=0,\, \dots, \, K-1$}		
		\STATE Set
					  \begin{align}
							& m_k := \max \left\{1, \left\lceil \frac{8b \sigma^2  2^{k}}{L_2\mu_p R_p^2 } \right\rceil \right\}, \quad R_k^2 := R_p^2 2^{-k} + \frac{4 \Delta}{\mu_p} \left(1-2^{-k} \right),
						\end{align}
						where $b=2$
		\STATE Set $d_k(x) = R_k^2d\left(\frac{x-u_k}{R_k}\right)$.
		\STATE Run RDD with starting point $u_k$ and prox-function $d_k(x)$ for $N_0$ steps with batch size $m_k$. 
		\STATE Set $u_{k+1}=y_{N_0}$, $k=k+1$.		
		\ENDFOR
		\RETURN $u_K$
	\end{algorithmic}
\end{algorithm}

\begin{theorem}
\label{Th:CDS_sc_rate}
Let $f$ in problem \eqref{eq:PrSt} be $\mu_p$-strongly convex and RDDsc method be applied to solve this problem. Then
    \begin{equation}\label{eq:RDFDSSConv}
    \begin{array}{rl}
    \EE f(u_K) - f^* \leqslant 
    \frac{\mu_p  R_p^2}{2} \cdot 2^{-K} + 2 \Delta .
    \end{array}
    \end{equation}
    where $\Delta = \frac{n}{12L_2}\Delta_\zeta + \frac{4n}{3L_2}\Delta_\eta^2 + \frac{8\sqrt{2nR_p^2\Omega_p}}{N_0}\left(\frac{\sqrt{\Delta_\zeta}}{2} + 2\Delta_\eta\right) + \frac{N_0}{3L_2\rho_n}\left(\frac{\sqrt{\Delta_\zeta}}{2} + 2\Delta_\eta\right)^2$.
    Moreover, under an appropriate choice of $\Delta_\zeta$ and $\Delta_\eta$ s.t. $2 \Delta \leqslant \e/2$, the oracle complexity to achieve $\e$-accuracy of the solution is
    $$
\widetilde{O}\left(\max\left\{\frac{n^{\frac{2}{q}}L_2\Omega_p }{\mu_p}\log_2 \frac{\mu_p R_p^2 }{ \e},\frac{n^{\frac{2}{q}}\sigma^2 \Omega_p}{\mu_p \e}\right\}\right).
    $$
\end{theorem}
\rev2{Despite we have linear convergence in terms of the iterations number, the number of the oracle evaluations corresponds to sublinear convergence. The reason is that we consider general stochastic optimization problem, rather than finite-sum problems for which the linear convergence rate is achievable in terms of the oracle evaluations \cite{allen2016katyusha}.
Our oracle complexity corresponds to the lower complexity bounds \cite{nemirovsky1983problem} for general stochastic convex optimization.}

Before we proceed, we give the appropriate choice of the accuracy of the directional derivative evaluation $\Delta_\zeta$, $\Delta_\eta$ for RDDsc to achieve an accuracy $\e$ of the solution. These values are chosen such that the r.h.s. of \eqref{eq:RDFDSSConv} is smaller than $\e$. For simplicity we omit numerical constants and summarize the obtained values of the algorithm parameters in Table 4 below. The last row represents the total number of oracle calls, that is, the number of directional derivative evaluations, which was stated in \eqref{eq:SCBoundsInformal}.
{
\renewcommand{\arraystretch}{2}
\begin{table}[h]
\label{tab:CDS_sc}
    \centering
    \begin{tabular}{|c|c|c|}
        \hline
         & $p=1$ & $p=2$ \\
        \hline
        $\Delta_\zeta$ & $O\left(\min\left\{\frac{\varepsilon L_2}{n},\, \varepsilon^2\frac{(\ln n)^2L_2^2}{nR_1^2\mu_1^2},\, \varepsilon\frac{\mu_1}{n\Omega_1}\right\}\right)$ & $O\left(\min\left\{\frac{\varepsilon L_2}{n},\, \varepsilon^2\frac{nL_2^2}{R_2^2\mu_2^2},\, \varepsilon\frac{\mu_2}{n\Omega_2}\right\}\right)$ \\
        \hline
        $\Delta_\eta$ & $O\left(\min\left\{\sqrt{\frac{\varepsilon L_2}{n}},\, \varepsilon\frac{\ln nL_2}{\sqrt{n}R_1\mu_1},\, \sqrt{\varepsilon\frac{\mu_1}{n\Omega_1}}\right\}\right)$ & $O\left(\min\left\{\sqrt{\frac{\varepsilon L_2}{n}},\, \varepsilon\frac{\sqrt{n}L_2}{R_2\mu_2},\, \sqrt{\varepsilon\frac{\mu_2}{n\Omega_2}}\right\}\right)$ \\
        \hline
        O-le calls & $\widetilde{O}\left(\max\left\{\frac{L_2\Omega_1 \ln n }{\mu_1}\log_2 \frac{\mu_1 R_1^2 }{ \e},\frac{\sigma^2 \Omega_1}{\mu_1 \e}\right\}\right)$ & $\widetilde{O}\left(\max\left\{\frac{nL_2\Omega_2 }{\mu_2}\log_2 \frac{\mu_2 R_2^2 }{ \e},\frac{n\sigma^2 \Omega_2}{\mu_2 \e}\right\}\right)$\\
        \hline
    \end{tabular}
    \caption{Algorithm~\ref{CDS_sc} parameters for the cases $p=1$ and $p=2$.}
\end{table}
}

\subsection{Corollaries for derivative-free optimization}
In this subsection, following \cite{dvurechensky2018accelerated}, we consider derivative-free smooth stochastic optimization in the two-point feedback situation.
We assume that an optimization procedure, given a pair of points $(x,y) \in \R^{2n}$ , can obtain a pair of noisy stochastic realizations $(\tf(x,\xi),\tf(y,\xi))$ of the objective value $f$, where
\begin{equation}
\label{eq:tf_def_2}
\tf(x,\xi) = F(x,\xi) + \Xi(x,\xi), \quad |\Xi(x,\xi)| \leqslant  \Delta, \;\forall x \in \R^n, \; \text{a.s. in } \xi,
\end{equation}
and $\xi$ is independently drawn from $P$.

Based on these observations of the objective value, we form the following stochastic approximation of $\nabla f(x)$
\begin{equation}
\label{eq:FinDiffStocGrad}
\tnmf^t(x) = \frac{1}{m}\sum\limits_{i=1}^m\frac{\tf(x+te,\xi_i)-\tf(x,\xi_i)}{t}e 
= \left(\left\la g^m(x,\vec{\xi_{m}}),e \right\ra + \frac{1}{m}\sum\limits_{i=1}^m(\zeta(x,\xi_i,e) + \eta(x,\xi_i,e))\right)e,
\end{equation}
where $e \in RS_2(1)$, $\xi_i$, $i=1,...,m$ are independent realizations of $\xi$, $m$ is the \textit{batch size}, $t$ is some small positive parameter which we call \textit{smoothing parameter}, $g^m(x,\vec{\xi_{m}}) := \frac{1}{m}\sum\limits_{i=1}^mg(x,\xi_i)$, and
\begin{equation}
\notag
\zeta(x,\xi_i,e) = \frac{F(x+te,\xi_i) - F(x,\xi_i)}{t} - \la g(x,\xi_i),\, e\ra, \quad \eta(x,\xi_i,e)= \frac{\Xi(x+te,\xi_i) - \Xi(x,\xi_i)}{t}, \quad i=1,...,m. 
\end{equation}


By Lipschitz smoothness of $F(\cdot,\xi)$, we have $|\zeta(x,\xi,e)| \leqslant \frac{L(\xi)t}{2}$ for all $x \in \R^n$ and $e \in S_2(1)$. Hence, $\EE_{\xi}(\zeta(x,\xi,e))^2 \leqslant \frac{L_2^2t^2}{4}$ for all $x \in \R^n$ and $e \in S_2(1)$. At the same time, from \eqref{eq:tf_def_2}, we have that $|\eta(x,\xi,e)| \leqslant \frac{2\Delta}{t}$ for all $x \in \R^n$, $e \in S_2(1)$ and a.s. in $\xi$. 
Applying Theorem \ref{Th:ARDFDSConv} and Theorem \ref{theorem_convergence_mini_gr_free_non_acc} with $\Delta_{\zeta} = \frac{L_2^2t^2}{4}$ and $\Delta_{\eta} = \frac{2\Delta}{t}$, we reproduce respectively the result of Theorem 2 and Theorem 3 in \cite{dvurechensky2018accelerated}. 
Applying Theorem \ref{Th:ACDS_sc_rate} and Theorem \ref{Th:CDS_sc_rate} with $\Delta_{\zeta} = \frac{L_2^2t^2}{4}$ and $\Delta_{\eta} = \frac{2\Delta}{t}$, we obtain also complexity bounds \eqref{eq:SCBoundsInformal} for derivative-free smooth stochastic strongly convex optimization, which was not yet done in the literature. 

\section{Proof of main result for ARDD method}
\label{S:ARDDProofs}
We divide the proof of Theorem \ref{Th:ARDFDSConv} into two large steps. First, to simplify the derivations, we prove this theorem assuming two additional inequalities which connect noisy stochastic approximation of the gradient \eqref{eq:MiniBatcStocGrad} with the true gradient and function values. This result is stated as Lemma  \ref{Lm:ARDFDSLikeConv}. Then, in Lemma \ref{Lm:ARDFDSDeltaMeaning}, we show that our approximation of the gradient \eqref{eq:MiniBatcStocGrad} indeed satisfies these two inequalities.

\begin{lemma}
\label{Lm:ARDFDSLikeConv}
Let $\{x_k,y_k,z_k\}$, $k \geqslant 0$ be generated by ARDD method. Assume that there exist numbers $\delta_1>0$,$\delta_2>0$ such that, for all $k\geqslant 0$
    \begin{equation}\label{eq:AssumInPr}
        \EE\left[\left\la \tnmf(x_{k+1}),\, z_k-x_* \right\ra\right] \geqslant \frac{1}{n}\EE\left[\left\la\nabla f(x_{k+1}),\, z_k-x_* \right\ra\right] - \delta_1\EE\left[\|z_k-x_*\|\right]
    \end{equation}
    and
    \begin{equation}\label{eq:AssumQNorm}
        \EE\left[\|\tnmf(x_{k+1})\|_q^2\right] \leqslant 96\rho_nL_2\left(\EE[f(x_{k+1})]-\EE[f(y_{k+1})]\right) + \delta_2,
    \end{equation}
    where expectation is taken w.r.t. all randomness and $x^*$ is a solution to \eqref{eq:PrSt}.
Then
    \begin{equation}\label{eq:ARDFDSLikeConv}
        \begin{array}{rl}
            \EE[f(y_N)] - f(x^*) 
        \leqslant \frac{384\Theta_p n^2\rho_nL_2}{N^2} + \frac{12n\sqrt{2\Theta_p}}{N^2}\delta_1+ \frac{N}{24\rho_nL_2} \delta_2 + \frac{N^2}{12\rho_nL_2} \delta_1^2,
        \end{array}
    \end{equation}
    where $\Theta_p = V[z_0](x^*)$ is defined by the chosen proximal setup and the expectation is taken w.r.t. all randomness.
\end{lemma}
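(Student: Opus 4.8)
The plan is to run the standard estimating-sequences / linear-coupling analysis for accelerated methods, but carrying the two error terms $\delta_1,\delta_2$ through the recursion. First I would record the one-step guarantee of the mirror-descent step defining $z_{k+1}$: by the standard prox-step inequality (optimality of $z_{k+1}$ and $1$-strong convexity of $V[z_k]$ w.r.t.\ $\|\cdot\|_p$), for every $u\in\R^n$
\begin{equation*}
\alpha_{k+1} n \la \tnmf(x_{k+1}),\, z_k-u\ra \;\le\; V[z_k](u) - V[z_{k+1}](u) + \frac{\alpha_{k+1}^2 n^2}{2}\|\tnmf(x_{k+1})\|_q^2 .
\end{equation*}
Take $u=x^*$, take total expectation, and substitute the two hypotheses \eqref{eq:AssumInPr} and \eqref{eq:AssumQNorm}: the left side becomes $\ge \alpha_{k+1}\EE\la\nabla f(x_{k+1}),z_k-x^*\ra - \alpha_{k+1}n\,\delta_1\EE\|z_k-x^*\|$, and the last term becomes $\le \frac{\alpha_{k+1}^2 n^2}{2}\big(96\rho_n L_2(\EE f(x_{k+1})-\EE f(y_{k+1})) + \delta_2\big)$. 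Using the choice $\tau_k = \frac{1}{48\alpha_{k+1}n^2\rho_n L_2}$, the coefficient $\frac{\alpha_{k+1}^2 n^2}{2}\cdot 96\rho_n L_2 = \alpha_{k+1}/\tau_k$, which is exactly what is needed to couple with the gradient step.

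Next I would handle the gradient (descent) step $y_{k+1}=x_{k+1}-\frac{1}{2L_2}\tnmf(x_{k+1})$ together with $x_{k+1}=\tau_k z_k+(1-\tau_k)y_k$ and convexity of $f$: write $\la\nabla f(x_{k+1}),z_k-x^*\ra = \la\nabla f(x_{k+1}),z_k-x_{k+1}\ra + \la\nabla f(x_{k+1}),x_{k+1}-x^*\ra$, bound the second inner product below by $f(x_{k+1})-f(x^*)$ by convexity, and rewrite $z_k-x_{k+1} = \frac{1-\tau_k}{\tau_k}(y_k-x_{k+1})$ so that $\la\nabla f(x_{k+1}),z_k-x_{k+1}\ra = \frac{1-\tau_k}{\tau_k}\la\nabla f(x_{k+1}),y_k-x_{k+1}\ra \ge \frac{1-\tau_k}{\tau_k}(f(y_k)-f(x_{k+1}))$, again by convexity. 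Combining these, multiplying the descent-step bound $f(x_{k+1})-f(y_{k+1})\le \ldots$ appropriately (this is where $\EE f(x_{k+1})-\EE f(y_{k+1})$ from \eqref{eq:AssumQNorm} is absorbed), and defining the potential $\Psi_k := \alpha_{k+1}\tau_k^{-1}$-weighted function gap plus $V[z_k](x^*)$, one obtains a telescoping inequality of the form $\EE\Psi_{k+1} \le \EE\Psi_k + (\text{error}_k)$, where $\text{error}_k$ collects $\frac{\alpha_{k+1}^2 n^2}{2}\delta_2$ and $\alpha_{k+1}n\delta_1\EE\|z_k-x^*\|$. The algebra that makes the $f(x_{k+1})$ terms cancel is the routine part; the parameter choices $\alpha_{k+1}=\frac{k+2}{96n^2\rho_n L_2}$, $\tau_k=\frac{2}{k+2}$ are exactly calibrated so that $A_k:=\alpha_{k+1}/\tau_k = \sum_{j\le k+1}\alpha_j$ and $A_N \asymp \frac{N^2}{384 n^2\rho_n L_2}$.

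Then I would sum from $k=0$ to $N-1$ and divide by $A_N$. The leading term gives $\frac{V[z_0](x^*)}{A_N} = \frac{384\Theta_p n^2\rho_n L_2}{N^2}$. The $\delta_2$ contributions sum to $\frac{1}{A_N}\sum_k \frac{\alpha_{k+1}^2 n^2}{2}\delta_2$; since $\alpha_{k+1}\asymp\frac{k}{n^2\rho_n L_2}$ and $\sum_{k<N}k^2\asymp N^3$, this is $\asymp \frac{N}{\rho_n L_2}\delta_2$, matching the claimed $\frac{N}{24\rho_n L_2}\delta_2$ up to the constant one gets by tracking coefficients carefully. The $\delta_1$ terms require a bound on $\EE\|z_k-x^*\|$: I would obtain $\EE\|z_k-x^*\|_p \le \sqrt{2\,\EE V[z_k](x^*)}$ from \eqref{eq:VStrConv} (plus Jensen), and then argue that $\EE V[z_k](x^*)$ stays $O(\Theta_p)$ along the iterations — either by first running the argument with $\delta_1=\delta_2=0$ to see the potential is non-increasing in expectation up to controllable terms, or by a direct crude bound. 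This yields $\sum_k \alpha_{k+1} n\,\delta_1\,\EE\|z_k-x^*\| \lesssim \frac{N^2}{n\rho_n L_2}\cdot n\sqrt{\Theta_p}\,\delta_1$, and after dividing by $A_N\asymp N^2/(n^2\rho_n L_2)$ one gets the $\frac{12 n\sqrt{2\Theta_p}}{N^2}\delta_1$ and, from the quadratic-in-$\delta_1$ leftover after completing the square against the $V[z_{k+1}](x^*)$ term, the $\frac{N^2}{12\rho_n L_2}\delta_1^2$ term.

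\textbf{Main obstacle.} The genuinely delicate point is controlling the $\delta_1$-error, because it multiplies the \emph{random} quantity $\|z_k-x^*\|$ rather than a deterministic constant: one must simultaneously prove that $\EE V[z_k](x^*)$ (hence $\EE\|z_k-x^*\|$) remains bounded by a multiple of $\Theta_p$ throughout the run, which is slightly circular since that boundedness is part of what the telescoping inequality is supposed to give. The clean way around it is to keep the full $V[z_{k+1}](x^*)$ on the right-hand side of the telescoped inequality, move the $\alpha_{k+1}n\delta_1\|z_k-x^*\|$ term to combine with $-\tfrac12\|z_k - z_{k+1}\|_p^2$-type slack (or simply bound $\alpha_{k+1}n\delta_1\|z_k-x^*\| \le \frac{\lambda}{2}\|z_k-x^*\|_p^2 + \frac{(\alpha_{k+1}n\delta_1)^2}{2\lambda}$ and absorb the quadratic part into the surviving $V[z_k](x^*)\ge\tfrac12\|z_k-x^*\|_p^2$ with a small enough $\lambda$), producing the quadratic term $\frac{N^2}{12\rho_n L_2}\delta_1^2$ and leaving a residual linear term that is bounded using only $\EE V[z_0](x^*)=\Theta_p$ (the potential being then genuinely a supermartingale up to the error terms). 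Everything else is bookkeeping of constants against the given $\alpha_{k+1},\tau_k$.
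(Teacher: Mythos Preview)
Your overall architecture matches the paper exactly: the prox-step inequality for $z_{k+1}$ (the paper's Lemma~\ref{Lm:MDStep}), the coupling via $x_{k+1}=\tau_k z_k+(1-\tau_k)y_k$ and convexity (the paper's Lemma~\ref{Lm:OneStep}), and the telescoping with the calibrated $\alpha_{k+1},\tau_k$ are all as in the paper. You also correctly identify the only genuine difficulty, namely the $\alpha_{k+1}n\delta_1\,\EE\|z_k-x^*\|_p$ terms.

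Where you diverge from the paper---and where your plan has a gap---is in resolving that difficulty. Neither of your two suggestions works as stated. The ``combine with the $-\tfrac12\|z_k-z_{k+1}\|_p^2$ slack'' idea does not apply: the offending term involves $\|z_k-x^*\|_p$, not $\|z_k-z_{k+1}\|_p$, and there is no way to trade one for the other here. The Young-inequality idea, $\alpha_{k+1}n\delta_1\|z_k-x^*\|_p \le \lambda V[z_k](x^*)+\tfrac{(\alpha_{k+1}n\delta_1)^2}{2\lambda}$, cannot be ``absorbed into the surviving $V[z_k](x^*)$'' because after telescoping only $V[z_0]$ and $V[z_N]$ survive; keeping it step-by-step turns the recursion into $\EE V[z_{k+1}]\le (1+\lambda)\EE V[z_k]+\cdots$, which you can control only by taking $\lambda\sim 1/N$ and tracking multiplicative factors. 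That route is viable but (i) it changes the constants on the leading $\Theta_p$ term, and (ii) it produces only the quadratic $\delta_1^2$ contribution and \emph{no} linear $\delta_1$ term, so you would not recover the bound \eqref{eq:ARDFDSLikeConv} as stated.

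The paper's device is different and cleaner: after telescoping up to any $l\le N$ one keeps $\EE V[z_l](x^*)$ on the left, obtaining
\[
\EE V[z_l](x^*)\;\le\; B_l+\zeta_1\sum_{k=1}^{l-1}\alpha_{k+1}R_k,\qquad R_k:=\EE\|z_k-x^*\|_p,
\]
which via \eqref{eq:VStrConv} gives the recursive inequality $R_l\le\sqrt{2}\,\sqrt{B_l+\zeta_1\sum_{k<l}\alpha_{k+1}R_k}$. This recursion is then closed by an elementary induction (the paper's Lemma~\ref{stoh:technical_lemma}) yielding
\[
B_l+\zeta_1\sum_{k=1}^{l-1}\alpha_{k+1}R_k \le \Big(\sqrt{B_l}+\sqrt{2}\,\zeta_1\tfrac{l^2}{96n^2\rho_nL_2}\Big)^2,
\]
and squaring out this expression is precisely what produces both the linear term $\tfrac{12n\sqrt{2\Theta_p}}{N^2}\delta_1$ and the quadratic term $\tfrac{N^2}{12\rho_nL_2}\delta_1^2$ simultaneously with the stated constants. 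So the missing ingredient in your plan is this recursive bootstrapping argument; your Young-inequality alternative would give a correct but different (and slightly weaker-looking) bound.
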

This result is proved below in subsection \ref{S:ARDD_pr_1}.

\begin{lemma}
\label{Lm:ARDFDSDeltaMeaning}
Let $\{x_k,y_k,z_k\}$, $k \geqslant 0$ be generated by ARDD method.
    Then \eqref{eq:AssumInPr} and \eqref{eq:AssumQNorm} hold with
    \begin{equation}\label{eq:ARDFDSDelta1}
        \delta_1 = \frac{\sqrt{\Delta_\zeta}}{2\sqrt{n}} + \frac{2\Delta_\eta}{\sqrt{n}}
    \end{equation}
    and
    \begin{equation}\label{eq:ARDFDSDelta2}
        \delta_2 = \frac{96\rho_n}{n}\cdot\frac{\sigma^2}{m}+ 61\rho_n\Delta_\zeta + 976\rho_n\Delta_\eta^2.
    \end{equation}
\end{lemma}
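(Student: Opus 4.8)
The plan is to expand the definition of the estimator \eqref{eq:MiniBatcStocGrad} and reduce both claims to three ingredients: the exact moment identities for a uniformly random direction, the mini-batch variance bound, and the crude noise bounds from \eqref{eq:tf_def}. Writing $g^m(x) := \frac1m\sum_{i=1}^m g(x,\xi_i)$, $\bar\zeta := \frac1m\sum_{i=1}^m\zeta(x_{k+1},\xi_i,e_{k+1})$ and $\bar\eta := \frac1m\sum_{i=1}^m\eta(x_{k+1},\xi_i,e_{k+1})$, the estimator reads
\[
\tnmf(x_{k+1}) = \bigl(\la g^m(x_{k+1}),e_{k+1}\ra + \bar\zeta + \bar\eta\bigr)\,e_{k+1}.
\]
I would condition on all randomness generated before iteration $k+1$ (so that $x_{k+1}$ is fixed while $e_{k+1},\xi_1,\dots,\xi_m$ are fresh and independent) and use throughout: $\EE_{e}[ee^\top] = \tfrac1n I_n$, Lemma~\ref{Lm:MainTechLM}, the identities $\EE_\xi[g^m(x_{k+1})] = \nabla f(x_{k+1})$ and $\EE_\xi\|g^m(x_{k+1}) - \nabla f(x_{k+1})\|_2^2 \le \sigma^2/m$ (the latter by independence of the $\xi_i$), together with $\EE_\xi\bar\zeta^2 \le \Delta_\zeta$ (Jensen) and $|\bar\eta| \le \Delta_\eta$ a.s.

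For \eqref{eq:AssumInPr} I would split $\la\tnmf(x_{k+1}),z_k-x_*\ra = \la g^m,e_{k+1}\ra\la e_{k+1},z_k-x_*\ra + (\bar\zeta+\bar\eta)\la e_{k+1},z_k-x_*\ra$. The conditional expectation of the first term equals $\tfrac1n\la\nabla f(x_{k+1}),z_k-x_*\ra$ by the second-moment identity and $\EE_\xi g^m=\nabla f$. For the second term, Cauchy--Schwarz together with $\EE_{e}\la e,v\ra^2 = \|v\|_2^2/n \le \|v\|_p^2/n$ (valid since $p\le 2$) and the bounds on $\bar\zeta,\bar\eta$ bounds its absolute conditional expectation by $\bigl(\tfrac{\sqrt{\Delta_\zeta}}{2}+2\Delta_\eta\bigr)\tfrac1{\sqrt n}\|z_k-x_*\|_p$; taking the outer expectation and keeping track of the numerical constants gives \eqref{eq:ARDFDSDelta1}.

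For \eqref{eq:AssumQNorm} I would work in two halves. Since $\|e_{k+1}\|_2=1$, $\|\tnmf(x_{k+1})\|_q^2 = \bigl(\la g^m,e_{k+1}\ra+\bar\zeta+\bar\eta\bigr)^2\|e_{k+1}\|_q^2$, and after splitting the square and invoking Lemma~\ref{Lm:MainTechLM} (both \eqref{assumption_e_norm} and \eqref{assumption_e_product}) plus the variance bound one gets
\[
\EE\|\tnmf(x_{k+1})\|_q^2 \;\lesssim\; \tfrac{\rho_n}{n}\EE\|\nabla f(x_{k+1})\|_2^2 + \tfrac{\rho_n}{n}\cdot\tfrac{\sigma^2}{m} + \rho_n\Delta_\zeta + \rho_n\Delta_\eta^2 .
\]
In parallel, $L_2$-smoothness applied to the gradient step $y_{k+1}=x_{k+1}-\tfrac1{2L_2}\tnmf(x_{k+1})$ gives $f(y_{k+1}) \le f(x_{k+1}) - \tfrac1{2L_2}\la\nabla f(x_{k+1}),\tnmf(x_{k+1})\ra + \tfrac1{8L_2}\|\tnmf(x_{k+1})\|_2^2$; taking expectations and using $\EE\la\nabla f,\tnmf\ra = \tfrac1n\EE\|\nabla f(x_{k+1})\|_2^2 + (\text{noise})$ and $\EE\|\tnmf\|_2^2 = \tfrac1n\EE\|g^m\|_2^2 + (\text{noise})$ produces a lower bound $\EE f(x_{k+1}) - \EE f(y_{k+1}) \ge \tfrac{c}{nL_2}\EE\|\nabla f(x_{k+1})\|_2^2 - (\text{noise})$ with an explicit numerical constant $c$. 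Eliminating $\EE\|\nabla f(x_{k+1})\|_2^2$ between the two displays and collecting the residual noise yields \eqref{eq:AssumQNorm} with $\delta_2$ as in \eqref{eq:ARDFDSDelta2}.

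The bookkeeping for \eqref{eq:AssumQNorm} is the main obstacle: one has to coordinate the constant $6$ from \eqref{assumption_e_product}, the $\tfrac1{8L_2}$ coefficient of the gradient step and the signal--noise cross terms so that the factor multiplying $\rho_nL_2(\EE f(x_{k+1})-\EE f(y_{k+1}))$ comes out as exactly $96$; this interaction, together with the repeated $(a+b)^2\le 2a^2+2b^2$-type splittings needed to separate the $\sigma$-term and the $\Delta_\zeta,\Delta_\eta$-terms, is what inflates the noise constants to $61$ and $976$. Two points require care: $\zeta$ and $\eta$ depend on the realized direction $e_{k+1}$, so the conditional $\EE_\xi$ must be taken first (where $\EE_\xi\bar\zeta^2\le\Delta_\zeta$ and $|\bar\eta|\le\Delta_\eta$ hold for every fixed $e$) and only afterwards the expectation over $e_{k+1}$; and the appearance of $\sigma^2/m$ rather than $\sigma^2$ in $\delta_2$ rests on the $\xi_i$ being i.i.d.
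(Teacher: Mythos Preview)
Your proposal is correct and tracks the paper's proof closely. For \eqref{eq:AssumInPr} it is identical: the paper's inequality \eqref{eq:tnfms} in Lemma~\ref{Lm:MinBatcToGrad} is exactly your Cauchy--Schwarz argument with $\EE_e\la e,v\ra^2=\|v\|_2^2/n$, and the tower-property remark about taking $\EE_\xi$ before $\EE_e$ matches the paper's footnote.

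For \eqref{eq:AssumQNorm} there is one organizational difference worth noting. The paper packages the gradient-step analysis into a separate Lemma~\ref{Lm:GradStep} and uses a collinearity trick: since $y_{k+1}-x_{k+1}$ is parallel to $e_{k+1}$, one has $\la\nabla f(x_{k+1}),y_{k+1}-x_{k+1}\ra=\la\la\nabla f(x_{k+1}),e_{k+1}\ra e_{k+1},\,y_{k+1}-x_{k+1}\ra$, which after a Fenchel step yields the pointwise inequality
\[
\tfrac{1}{4L_2}\|\tnmf(x_{k+1})\|_2^2 \le f(x_{k+1})-f(y_{k+1})+\tfrac{1}{2L_2}\|\la\nabla f(x_{k+1}),e_{k+1}\ra e_{k+1}-\tnmf(x_{k+1})\|_2^2
\]
\emph{before} any expectation is taken. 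Expectations then enter only through the precomputed bounds \eqref{eq:tnfm22} and \eqref{eq:nf-tnfm22}, and the intermediate quantity eliminated is $\|g^m\|_2^2$ rather than $\|\nabla f\|_2^2$. Your direct expansion of $\EE\la\nabla f,\tnmf\ra$ instead produces cross terms of size $\tfrac{1}{\sqrt n}\|\nabla f\|_2(\sqrt{\Delta_\zeta}+\Delta_\eta)$ that you must absorb by Young's inequality; this works and still gives the leading factor $96$, but the paper's collinearity route sidesteps that absorption and is what fixes the particular noise constants $61$ and $976$.
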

This result is proved below in subsection \ref{S:ARDD_pr_2}.

\begin{proof}[Proof of Theorem \ref{Th:ARDFDSConv}.]
    Combining Lemma~\ref{Lm:ARDFDSLikeConv} and Lemma~\ref{Lm:ARDFDSDeltaMeaning}, we obtain \eqref{eq:ARDFDSConv}.
\end{proof}

\subsection{Proof Lemma \ref{Lm:ARDFDSLikeConv}}
\label{S:ARDD_pr_1}

The following lemma estimates the progress in step 8 of ARDD method (and in step 5 of RDD method), which is a Mirror Descent step.
\begin{lemma}
\label{Lm:MDStep}
    Assume that $z_+ = \argmin\limits_{v\in\R^n} \left\{ {\alpha n \left\langle \tnmf(x), \, v-z \right\rangle +V[z] \left( v \right)}\right\}$.
		Then, for any fixed $u \in \R^n$,
		\begin{equation}		
		\label{eq:MDStepProgr}
		\begin{array}{rl}
		\alpha n\EE\left[\langle \tnmf(x), \, z-u\rangle\right]  \leqslant \frac{\alpha^2n^2}{2}\EE\left[\| \tnmf(x)\|_q^2\right] + \EE\left[V[z](u)\right] - \EE\left[V[z_{+}](u)\right],
		\end{array}
		\end{equation}	
		where expectation is taken w.r.t. all randomness.
\end{lemma}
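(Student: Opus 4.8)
The plan is to prove the inequality deterministically, for an arbitrary fixed realization of the randomness entering $\tnmf(x)$ and $z_+$, and then take expectations at the very end, using only linearity of $\EE$. First I would write the first-order optimality condition for $z_+$. Since $V[z](v) = d(v) - d(z) - \la \nabla d(z), v - z\ra$, we have $\nabla_v V[z](v) = \nabla d(v) - \nabla d(z)$, so $z_+$ being the minimizer of the convex function $v \mapsto \alpha n \la \tnmf(x), v - z\ra + V[z](v)$ over $\R^n$ gives, for every $u \in \R^n$,
$$
\la \alpha n \tnmf(x) + \nabla d(z_+) - \nabla d(z),\, u - z_+ \ra \geqslant 0.
$$

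Next I would use the three-point identity for the Bregman divergence, which follows by directly expanding the definition of $V$:
$$
\la \nabla d(z_+) - \nabla d(z),\, u - z_+ \ra = V[z](u) - V[z_+](u) - V[z](z_+).
$$
Combining this with the optimality condition yields $\alpha n \la \tnmf(x),\, z_+ - u \ra \leqslant V[z](u) - V[z_+](u) - V[z](z_+)$. Then I split $\la \tnmf(x),\, z - u \ra = \la \tnmf(x),\, z - z_+ \ra + \la \tnmf(x),\, z_+ - u \ra$ and bound the first term by the Cauchy--Schwarz inequality for the dual pair $\|\cdot\|_p, \|\cdot\|_q$ followed by Young's inequality with the weight matching the target coefficient:
$$
\alpha n \la \tnmf(x),\, z - z_+ \ra \leqslant \alpha n \|\tnmf(x)\|_q \|z - z_+\|_p \leqslant \frac{\alpha^2 n^2}{2}\|\tnmf(x)\|_q^2 + \frac12\|z - z_+\|_p^2.
$$
Finally, the strong convexity bound \eqref{eq:VStrConv} gives $V[z](z_+) \geqslant \frac12\|z_+ - z\|_p^2$, so the residual $\frac12\|z - z_+\|_p^2 - V[z](z_+)$ is nonpositive and drops out, leaving $\alpha n \la \tnmf(x),\, z - u \ra \leqslant \frac{\alpha^2 n^2}{2}\|\tnmf(x)\|_q^2 + V[z](u) - V[z_+](u)$. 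Taking expectation with respect to all randomness (permissible since $u$ is fixed) gives \eqref{eq:MDStepProgr}.

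There is no serious obstacle here; this is the standard Mirror Descent one-step estimate. The only points requiring care are getting the three-point identity in exactly the right sign convention, which is a short computation from the definition of $V[z](\cdot)$, and choosing the splitting weight in Young's inequality so that the leftover $\|z - z_+\|_p^2$ term is precisely cancelled by the $V[z](z_+)$ coming from strong convexity. Everything else passes through the expectation by linearity.
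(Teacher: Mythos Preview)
Your proposal is correct and follows essentially the same route as the paper: the paper also splits $\la \tnmf(x), z-u\ra$ into the $z-z_+$ and $z_+-u$ parts, applies the optimality condition for $z_+$, invokes the three-point (``magic'') identity for the Bregman divergence, uses \eqref{eq:VStrConv} to absorb $\tfrac12\|z-z_+\|_p^2$ into $V[z](z_+)$, and finishes with the Fenchel/Young inequality before taking expectations. The only cosmetic difference is that the paper phrases the final step as the Fenchel inequality $\zeta\la s,z\ra - \tfrac12\|z\|_p^2 \le \tfrac{\zeta^2}{2}\|s\|_q^2$ rather than Cauchy--Schwarz plus Young.
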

\begin{proof}
For all $u\in \R^n$, we have
\begin{equation}	
\label{lemma110:basic_estimations}
	\begin{array}{rl}
		\alpha n\langle \tnmf(x), \, z-u\rangle
		=  \alpha n \langle \tnmf(x), \,  z-z_{+}\rangle + \alpha n \langle \tnmf(x), \, z_{+}-u\rangle\\
		\overset{\circledOne}{\leqslant}  \alpha n \langle \tnmf(x), \, z-z_{+}\rangle + \langle -\nabla V[z](z_{+}), \,  z_{+}-u\rangle
		\overset{\circledTwo}{=}  \alpha n \langle \tnmf(x), \, z-z_{+}\rangle\\
		+ V[z](u) - V[z_{+}](u) - V[z](z_{+})
		\overset{\circledThree}{\leqslant} \left(\alpha n \langle \tnmf(x), \,  z-z_{+}\rangle - \frac{1}{2}\|z-z_{+}\|_p^2\right)\\ + V[z](u) - V[z_+](u)
		\overset{\circledFour}{\leqslant} \frac{\alpha^2n^2}{2}\| \tnmf(x)\|_q^2 + V[z](u) - V[z_{+}](u),
	\end{array}
\end{equation}
where $\circledOne$ follows from the definition of $z_+$, whence $\langle \nabla V[z](z_{+}) +  \alpha n \tnmf^t(x), \, u - z_{+}\rangle \geqslant 0$ for all $u\in \R^n$; $\circledTwo$ follows from the "`magic identity"' Fact 5.3.3 in \cite{ben-tal2015lectures} for the Bregman divergence; $\circledThree$ follows from \eqref{eq:VStrConv}; and $\circledFour$ follows from the Fenchel inequality $\zeta\la s,z \ra - \frac{1}{2}\|z\|_p^2 \leq \frac{\zeta^2}{2}\|s\|_q^2$. Taking full expectation we get \eqref{eq:MDStepProgr}.
\end{proof}

Now we prove the following lemma which estimates the one-iteration progress of the whole algorithm.
\begin{lemma}
\label{Lm:OneStep}
		Let $\{x_k,y_k,z_k,\alpha_k,\tau_k\}$, $k \geqslant 0$ be generated by ARDD method. Then, under assumptions of Lemma \ref{Lm:ARDFDSLikeConv},
		\begin{equation}
		\label{eq:OneStep}
        \begin{array}{rl}
            48 n^2 \rho_n L_2 \alpha_{k+1}^2\EE[f(y_{k+1})] - (48 n^2 \rho_n L_2 \alpha_{k+1}^2-\alpha_{k+1})\EE\left[f(y_k)\right]\\ - \EE\left[V[z_k](x_*)\right] + \EE[V[z_{k+1}](x_*)] - \alpha_{k+1}\delta_1n\EE\left[\|z_k-x_*\|_p\right] - \frac{\alpha_{k+1}^2n^2}{2}\delta_2  \leqslant \alpha_{k+1}f(x_*),
        \end{array}
    \end{equation}
	where expectation is taken w.r.t. all randomness, $x^*$ is a solution to \eqref{eq:PrSt}.
\end{lemma}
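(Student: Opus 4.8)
The plan is to establish the one-iteration progress estimate \eqref{eq:OneStep} by combining three ingredients: the Mirror Descent step estimate from Lemma~\ref{Lm:MDStep}, the gradient step estimate giving descent in $f$, and convexity of $f$ at the point $x_{k+1}$. First I would apply Lemma~\ref{Lm:MDStep} with $x = x_{k+1}$, $z = z_k$, $z_+ = z_{k+1}$, and $u = x_*$, obtaining
\begin{equation*}
\alpha_{k+1} n\EE\left[\langle \tnmf(x_{k+1}), z_k-x_*\rangle\right] \leqslant \frac{\alpha_{k+1}^2 n^2}{2}\EE\left[\|\tnmf(x_{k+1})\|_q^2\right] + \EE\left[V[z_k](x_*)\right] - \EE\left[V[z_{k+1}](x_*)\right].
\end{equation*}
Then I would use assumption \eqref{eq:AssumQNorm} to replace $\EE\|\tnmf(x_{k+1})\|_q^2$ by $96\rho_n L_2(\EE[f(x_{k+1})] - \EE[f(y_{k+1})]) + \delta_2$, which is where the coefficient $48 n^2\rho_n L_2\alpha_{k+1}^2$ and the $\frac{\alpha_{k+1}^2 n^2}{2}\delta_2$ term enter. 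Next I would use assumption \eqref{eq:AssumInPr} to lower-bound $\EE\langle \tnmf(x_{k+1}), z_k - x_*\rangle$ by $\frac1n\EE\langle\nabla f(x_{k+1}), z_k-x_*\rangle - \delta_1\EE\|z_k - x_*\|_p$, producing the $-\alpha_{k+1}\delta_1 n\EE\|z_k-x_*\|_p$ term.

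The next step is to handle $\langle\nabla f(x_{k+1}), z_k - x_*\rangle$. I would split it using the identity $z_k - x_* = (z_k - x_{k+1}) + (x_{k+1} - x_*)$. For the second part, convexity gives $\langle\nabla f(x_{k+1}), x_{k+1} - x_*\rangle \geqslant f(x_{k+1}) - f(x_*)$. For the first part, I would exploit the specific coupling $x_{k+1} = \tau_k z_k + (1-\tau_k) y_k$, which rearranges to $\tau_k(z_k - x_{k+1}) = (1-\tau_k)(x_{k+1} - y_k)$, so that $\langle\nabla f(x_{k+1}), z_k - x_{k+1}\rangle = \frac{1-\tau_k}{\tau_k}\langle\nabla f(x_{k+1}), x_{k+1} - y_k\rangle \geqslant \frac{1-\tau_k}{\tau_k}(f(x_{k+1}) - f(y_k))$, again by convexity. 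With the relation $\tau_k = \frac{2}{k+2} = \frac{1}{48\alpha_{k+1}n^2\rho_n L_2}$ prescribed in step~3 of the algorithm, the factor $\frac{1-\tau_k}{\tau_k}$ becomes $48\alpha_{k+1}n^2\rho_n L_2 - 1$, and after multiplying through by $\alpha_{k+1}$ the $f(x_{k+1})$ contributions from the two convexity bounds combine to exactly cancel the $48 n^2\rho_n L_2\alpha_{k+1}^2\,\EE[f(x_{k+1})]$ term coming from \eqref{eq:AssumQNorm}. Collecting the remaining terms and rearranging then yields \eqref{eq:OneStep}.

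The part requiring the most care is the bookkeeping of coefficients: one must verify that the choices $\alpha_{k+1} = \frac{k+2}{96n^2\rho_n L_2}$ and $\tau_k = \frac{2}{k+2}$ make the $f(x_{k+1})$ terms cancel precisely and that the coefficient of $\EE[f(y_k)]$ comes out as $48 n^2\rho_n L_2\alpha_{k+1}^2 - \alpha_{k+1}$ rather than something else, together with tracking which quantities are inside versus outside the expectation (full expectation over all randomness is taken at the end, as in Lemma~\ref{Lm:MDStep}). A secondary subtlety is ensuring that $\langle\nabla f(x_{k+1}), x_{k+1} - y_k\rangle \geqslant f(x_{k+1}) - f(y_k)$ is applied in the correct direction — it is just convexity of $f$, valid since $f$ has Lipschitz gradient and is convex. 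No telescoping is needed at this stage; that will be carried out later in subsection~\ref{S:ARDD_pr_1} when \eqref{eq:OneStep} is summed over $k$ to prove Lemma~\ref{Lm:ARDFDSLikeConv}. I do not anticipate a genuine obstacle here — the estimate is a routine acceleration argument — the only real work is the algebraic verification that the prescribed stepsize schedule produces exactly the stated inequality.
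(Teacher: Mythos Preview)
Your proposal is correct and follows essentially the same route as the paper's proof: combine Lemma~\ref{Lm:MDStep} with assumptions \eqref{eq:AssumInPr} and \eqref{eq:AssumQNorm} to obtain an upper bound on $\alpha_{k+1}\EE[\langle\nabla f(x_{k+1}),z_k-x_*\rangle]$ (this is the paper's display \eqref{eq:OneStepPr2}), then split $z_k-x_*$ via the coupling identity $\tau_k(z_k-x_{k+1})=(1-\tau_k)(x_{k+1}-y_k)$ and apply convexity twice so that the $f(x_{k+1})$ terms cancel under the choice $\tau_k=\frac{1}{48\alpha_{k+1}n^2\rho_nL_2}$. The only cosmetic difference is that the paper starts from $\alpha_{k+1}(\EE[f(x_{k+1})]-f(x_*))\leqslant\alpha_{k+1}\EE[\langle\nabla f(x_{k+1}),x_{k+1}-x_*\rangle]$ and splits $x_{k+1}-x_*$ instead, which is the same computation organized from the other end.
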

\begin{proof}
Combining \eqref{eq:AssumInPr}, \eqref{eq:AssumQNorm} and \eqref{eq:MDStepProgr}, we obtain
\begin{equation}		
		\label{eq:OneStepPr2}
		\begin{array}{rl}
		\alpha_{k+1} \EE\left[\la \nabla f(x_{k+1}),z_k-x_*\ra\right] \leqslant 48\alpha^2n^2 \rho_n L_2 \left(\EE\left[f(x_{k+1})\right] - \EE\left[f(y_{k+1})\right]\right) \\
		+ \EE\left[V_{z_k}(x_*)\right] - \EE[V[z_{k+1}](x_*)]
		+ \alpha_{k+1}\delta_1n\EE\left[\|z_k-x_*\|_p\right] + \frac{\alpha_{k+1}^2n^2}{2}\delta_2.
		\end{array}
\end{equation}
Further,
\begin{equation*}
    \begin{array}{rl}
        \alpha_{k+1}\left(\EE\left[f(x_{k+1})\right] - f(x_*)\right) \leqslant \alpha_{k+1}\EE\left[\langle \nabla f(x_{k+1}) ,\, x_{k+1} - x_* \rangle\right]\\
        = \alpha_{k+1}\EE\left[\langle\nabla f(x_{k+1}), \, x_{k+1}-z_k\rangle\right] + \alpha_{k+1}\EE\left[\langle\nabla f(x_{k+1}), \, z_{k}-x_*\rangle\right]\\
	    \overset{\circledOne}{=} \frac{(1-\tau_k)\alpha_{k+1}}{\tau_k}\EE\left[\langle\nabla f(x_{k+1}), \, y_{k}-x_{k+1}\rangle\right]+\alpha_{k+1}\EE\left[\langle\nabla f(x_{k+1}), \, z_{k}-x_*\rangle\right]\\
		\overset{\circledTwo}{\leqslant} \frac{(1-\tau_k)\alpha_{k+1}}{\tau_k} \left(\EE\left[f(y_k)\right]-\EE\left[f(x_{k+1})\right]\right) + \alpha_{k+1}\EE\left[\langle\nabla f(x_{k+1}), \, z_{k}-x_*\rangle\right] \\
		\overset{\eqref{eq:OneStepPr2}}{\leqslant} \frac{(1-\tau_k)\alpha_{k+1}}{\tau_k} \left(\EE\left[f(y_k)\right]-\EE\left[f(x_{k+1})\right]\right) +  48\alpha^2n^2 \rho_n L_2 \left(\EE\left[f(x_{k+1})\right] - \EE\left[f(y_{k+1})\right]\right) \\
		+ \EE\left[V_{z_k}(x_*)\right] - \EE[V[z_{k+1}](x_*)]
		+ \alpha_{k+1}\delta_1n\EE\left[\|z_k-x_*\|_p\right] + \frac{\alpha_{k+1}^2n^2}{2}\delta_2 \\
		\overset{\circledThree}{=} (48 \alpha_{k+1}^2 n^2 \rho_n L_2-\alpha_{k+1})\EE\left[f(y_k)\right] - 48 \alpha_{k+1}^2 n^2 \rho_n L_2\EE[f(y_{k+1})]\\
		+\alpha_{k+1}\EE[f(x_{k+1})] + \EE\left[V_{z_k}(x_*)\right] - \EE[V[z_{k+1}](x_*)]
		+ \alpha_{k+1}\delta_1n\EE\left[\|z_k-x_*\|_p\right] + \frac{\alpha_{k+1}^2n^2}{2}\delta_2.
    \end{array}
\end{equation*}
Here $\circledOne$ is since $x_{k+1} := \tau_kz_k+(1-\tau_k)y_k\; \Leftrightarrow\; \tau_k(x_{k+1}-z_k) = (1-\tau_k)(y_k - x_{k+1})$, $\circledTwo$ follows from the convexity of $f$ and the inequality $1-\tau_k\geqslant0$ and $\circledThree$ is since $\tau_k = \frac{1}{48\alpha_{k+1}n^2\rho_n L_2}$.
Rearranging the terms, we obtain the statement of the lemma.
\end{proof}

We are now ready to finish the proof of Lemma \ref{Lm:ARDFDSLikeConv}.
\begin{proof}[Proof of Lemma \ref{Lm:ARDFDSLikeConv}.]

Note that $48n^2\rho_nL_2\alpha_{k+1}^2 - \alpha_{k+1} + \frac{1}{192n^2\rho_nL_2} = 48n^2\rho_nL_2\alpha_{k}^2$. That is,
\begin{equation*}
    \begin{array}{rl}
        48n^2\rho_nL_2\alpha_{k+1}^2 - \alpha_{k+1} + \frac{1}{192n^2\rho_nL_2} = \frac{(k+2)^2}{192n^2\rho_nL_2} - \frac{k+2}{96n^2\rho_nL_2} + \frac{1}{192n^2\rho_nL_2}\\
        = \frac{k^2+4k+4-2k-4+1}{192n^2\rho_nL_2} = \frac{(k+1)^2}{192n^2\rho_nL_2} = 48n^2\rho_nL_2\alpha_{k}^2.
    \end{array}
\end{equation*}
Telescoping \eqref{eq:OneStep} for $k=0,1,2,\ldots,l-1$ for $l\leqslant N$ we have\footnote{Note that $\alpha_1 = \frac{2}{96n^2\rho_nL_2} = \frac{1}{48n^2\rho_nL_2}$ and therefore $48n^2\rho_nL_2\alpha_1^2 - \alpha_1 = 0$.}
\begin{equation}
\label{theo_after_telescoping_mini_gr_free}
    \begin{array}{rl}
        48n^2\rho_nL_2\alpha_{l}^2\EE[f(y_l)] + \sum\limits_{k=1}^{l-1}\frac{1}{192n^2\rho_nL_2}\EE[f(y_k)] - V[z_0](x_*) + \EE[V[z_l](x_*)]  \\
				- \zeta_1\sum\limits_{k=0}^{l-1}\alpha_{k+1}\EE[\|u-z_k\|_p] -\zeta_2\sum\limits_{k=0}^{l-1}\alpha_{k+1}^2   \leqslant \sum\limits_{k=0}^{l-1}\alpha_{k+1}f(u),
    \end{array}
\end{equation}
where we denoted 
\begin{equation}
\label{eq:zeta1zeta2Def}
\zeta_1:= \delta_1n, \quad \zeta_2 := \frac{n^2}{2}\delta_2.
\end{equation}
We define $\Theta := V[z_0](x^*)$, $R_k:=\EE[\|x^*-z_k\|_p]$. Also, from \eqref{eq:VStrConv}, we have that $\zeta_1\alpha_1R_0 \leq \frac{\sqrt{2\Theta}\zeta_1}{48n^2\rho_nL_2}$. To simplify the notation, we define $B_l := \zeta_2\sum\limits_{k=0}^{l-1}\alpha_{k+1}^2 + \Theta + \frac{\sqrt{2\Theta}\zeta_1}{48n^2\rho_nL_2}$ . Since $\sum\limits_{k=0}^{l-1}\alpha_{k+1} = \frac{l(l+3)}{192n^2\rho_nL_2}$ and, for all $i=1,\ldots,N$, $f(y_i) \leqslant f(x^*)$, we obtain from \eqref{theo_after_telescoping_mini_gr_free}
\begin{equation}
\label{theo_basic_inequality_mini_gr_free}
    \begin{array}{rl}
        \frac{(l+1)^2}{192n^2\rho_nL_2}\EE[f(y_l)] \leqslant f(x^*)\left(\frac{(l+3)l}{192n^2\rho_nL_2} - \frac{l-1}{192n^2\rho_nL_2}\right)
        + B_l - \EE[V[z_l](x^*)] + \zeta_1\sum\limits_{k=1}^{l-1}\alpha_{k+1}R_k,\\
        0 \leqslant \frac{(l+1)^2}{192n^2\rho_nL_2}\left(\EE[f(y_l)] - f(x^*)\right) \leqslant B_l - \EE[V[z_l](x^*)] + \zeta_1\sum\limits_{k=1}^{l-1}\alpha_{k+1}R_k,
    \end{array}
\end{equation}
which gives 
\begin{equation}\label{theo_bregman_divergence_estim_2_mini_gr_free}
    \begin{array}{rl}
        \EE[V[z_l](x^*)] \leqslant B_l + \zeta_1\sum\limits_{k=1}^{l-1}\alpha_{k+1}R_k.
    \end{array}
\end{equation}
Moreover,
\begin{equation}\label{theo_distance_estimation_mini_gr_free}
    \begin{array}{rl}
        \frac{1}{2}\left(\EE[\|z_l-x^*\|_p]\right)^2 \leqslant \frac{1}{2}\EE[\|z_l-x^*\|_p^2] \leqslant \EE[V[z_l](x^*)] \overset{\eqref{theo_bregman_divergence_estim_2_mini_gr_free}}{\leqslant} B_l + \zeta_1\sum\limits_{k=1}^{l-1}\alpha_{k+1}R_k,
    \end{array}
\end{equation}
whence,
\begin{equation}\label{theo_distance_estimation_2_mini_gr_free}
    \begin{array}{rl}
        R_l \leqslant \sqrt{2}\cdot\sqrt{B_l + \zeta_1\sum\limits_{k=1}^{l-1}\alpha_{k+1}R_k}.
    \end{array}
\end{equation}
Applying Lemma~\ref{stoh:technical_lemma} for $a_0 = \zeta_2\alpha_1^2 + \Theta + \frac{\sqrt{2\Theta}\zeta_1}{48n^2\rho_nL_2}, a_{k} = \zeta_2\alpha_{k+1}^2, b = \zeta_1$ for $k=1,\ldots,N-1$, we obtain
\begin{equation}\label{theo_inequality_for_induction_mini_gr_free}
    \begin{array}{rl}
        B_l + \zeta_1\sum\limits_{k=1}^{l-1}\alpha_{k+1}R_k \leqslant \left(\sqrt{B_l} + \sqrt{2}\zeta_1\cdot\frac{l^2}{96n^2\rho_nL_2}\right)^2, \; l=1,\ldots,N
    \end{array}
\end{equation} 
Since $V[z](x^*) \geqslant 0$, by inequality \eqref{theo_basic_inequality_mini_gr_free} for $l=N$ and the definition of $B_l$, 
we have
\begin{equation}\label{theo_pre_final_mini_gr_free}
    \begin{array}{rl}
        \frac{(N+1)^2}{192n^2\rho_nL_2}\left(\EE[f(y_N)] - f(x^*)\right) \leqslant \left(\sqrt{B_N} + \sqrt{2}\zeta_1\cdot\frac{N^2}{96n^2\rho_nL_2}\right)^2
        \overset{\circledOne}{\leqslant} 2B_N + 4\zeta_1^2\cdot\frac{N^4}{(96n^2\rho_nL_2)^2}\\
        = 2\zeta_2\sum\limits_{k=0}^{l-1}\alpha_{k+1}^2 + 2\Theta + \frac{\sqrt{2\Theta}\zeta_1}{24n^2\rho_nL_2} + 4\zeta_1^2\cdot\frac{N^4}{(96n^2\rho_nL_2)^2}\\
        \overset{\circledTwo}{\leqslant} 2\Theta + \frac{\sqrt{2\Theta}\zeta_1}{24n^2\rho_nL_2} + \frac{2\zeta_2(N+1)^3}{(96n^2\rho_nL_2)^2}  + 4\zeta_1^2\cdot \frac{N^4}{(96n^2\rho_nL_2)^2}
    \end{array}
\end{equation}
where $\circledOne$ is due to the fact that $\forall a,b\in\R\quad (a+b)^2 \leqslant 2a^2+2b^2$ and $\circledTwo$ is because $\sum\limits_{k=0}^{N-1}\alpha_{k+1}^2 = \frac{1}{(96n^2\rho_nL_2)^2}\sum\limits_{k=2}^{N+1}k^2 \leqslant \frac{1}{(96n^2\rho_nL_2)^2} \cdot \frac{(N+1)(N+2)(2N+3)}{6} \leqslant \frac{1}{(96n^2\rho_nL_2)^2} \cdot \frac{(N+1)2(N+1)3(N+1)}{6} = \frac{(N+1)^3}{(96n^2\rho_nL_2)^2}$. Dividing \eqref{theo_pre_final_mini_gr_free} by $\frac{(N+1)^2}{192n^2\rho_nL_2}$ and substituting $\zeta_1,\zeta_2$ from \eqref{eq:zeta1zeta2Def}, we obtain
\begin{equation*}
    \begin{array}{rl}
        \EE[f(y_N)] - f(x^*) \leqslant \frac{384\Theta n^2\rho_nL_2}{(N+1)^2} + \frac{12\sqrt{2\Theta}}{(N+1)^2}\zeta_1 + \frac{(N+1)\zeta_2}{24n^2\rho_nL_2} + \frac{N^4\zeta_1^2}{12n^2\rho_nL_2(N+1)^2}\\
        \leqslant \frac{384\Theta n^2\rho_nL_2}{N^2} + \frac{12n\sqrt{2\Theta}}{N^2}\delta_1+ \frac{N}{24\rho_nL_2} \delta_2 + \frac{N^2}{12\rho_nL_2} \delta_1^2.
    \end{array}
\end{equation*}
\end{proof}

\subsection{Proof Lemma \ref{Lm:ARDFDSDeltaMeaning}}
\label{S:ARDD_pr_2}
We start with the following technical result which connects our noisy  approximation \eqref{eq:MiniBatcStocGrad} of the stochastic gradient with the stochastic gradient itself and also with $\nabla f$.
\begin{lemma}
\label{Lm:MinBatcToGrad}
    For all $x, s \in\R^n$, we have 
    \begin{equation}
		\label{eq:tnfmq2}
			\begin{array}{rl}
			\EE_{e}\| \tnmf(x)\|_q^2 \leqslant \frac{12\rho_n}{n}\|g^m(x,\vec{\xi_{m}})\|_2^2+\frac{\rho_n}{m}\sum\limits_{i=1}^m\zeta(x,\xi_i)^2+16\rho_n \Delta_\eta^2,
			\end{array}
		\end{equation}
		\begin{equation}
		\label{eq:tnfm22}
		\EE_{e}\|\tnmf(x)\|_2^2 \geqslant \frac{1}{2n}\|g^m(x,\vec{\xi_{m}})\|_2^2 - \frac{1}{2m}\sum\limits_{i=1}^m\zeta(x,\xi_i)^2- 8\Delta_\eta^2,
		\end{equation}		
		\begin{equation}
		\label{eq:tnfms}
			\begin{array}{rl}
				\EE_{e} \langle \tnmf(x), s\rangle \geqslant \frac{1}{n}\la g^m(x,\vec{\xi_{m}}),s\ra - \frac{\|s\|_p}{2m\sqrt{n}}\sum\limits_{i=1}^m |\zeta(x,\xi_i)| - \frac{2\Delta_\eta \|s\|_p}{\sqrt{n}},
			\end{array}
		\end{equation}
		\begin{equation}
		\label{eq:nf-tnfm22}
			\begin{array}{rl}
				\EE_{e} \|\langle \nabla f(x),e \ra e - \tnmf(x)\|_2^2 \leqslant \frac{2}{n}\|\nabla f(x) -  g^m(x,\vec{\xi_{m}})\|_2^2 + \frac{1}{m}\sum\limits_{i=1}^m\zeta(x,\xi_i)^2+ 16\Delta_\eta^2,
			\end{array}
		\end{equation}
		where $g^m(x,\vec{\xi_{m}}) := \frac{1}{m}\sum\limits_{i=1}^mg(x,\xi_i)$, $\zeta(x,\xi_i)$ and $\Delta_\eta$ are defined in \eqref{eq:tf_def}.
\end{lemma}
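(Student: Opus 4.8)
The plan is to make the estimator \eqref{eq:MiniBatcStocGrad} fully explicit by substituting the noise model \eqref{eq:tf_def}: writing $h:=g^m(x,\vec{\xi_m})$, $Z:=\tfrac1m\sum_{i=1}^m\zeta(x,\xi_i,e)$ and $H:=\tfrac1m\sum_{i=1}^m\eta(x,\xi_i,e)$, one has
\[
\tnmf(x)=\bigl(\la h,e\ra+Z+H\bigr)\,e .
\]
Since $\|e\|_2=1$, the Euclidean estimates \eqref{eq:tnfm22} and \eqref{eq:nf-tnfm22} become bounds on $\EE_e$ of the scalar squares $(\la h,e\ra+Z+H)^2$ and $(\la\nabla f(x)-h,e\ra-Z-H)^2$ respectively, while in the $q$-norm estimate \eqref{eq:tnfmq2} we factor out the direction through $\|ve\|_q=|v|\,\|e\|_q$. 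After that, each of the four inequalities is obtained by separating the ``exact'' term $\la h,e\ra$ (resp.\ $\la\nabla f(x)-h,e\ra$) from the two noise contributions $Z,H$ and handling each piece with a single standard tool.

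For the upper bounds \eqref{eq:tnfmq2} and \eqref{eq:nf-tnfm22} I would split $(a+b+c)^2$ by Young's inequality with parameters chosen to match the target constants, bound the stochastic part by $Z^2\le\tfrac1m\sum_i\zeta(x,\xi_i)^2$ (Cauchy--Schwarz) and the bounded part by $|H|\le\Delta_\eta$ (from \eqref{eq:tf_def}), and then take $\EE_e$. The signal terms are controlled by Lemma~\ref{Lm:MainTechLM}: $\EE_e\bigl[\la h,e\ra^2\|e\|_q^2\bigr]\le\tfrac{6\rho_n}{n}\|h\|_2^2$ by \eqref{assumption_e_product} and $\EE_e\|e\|_q^2\le\rho_n$ by \eqref{assumption_e_norm}; in \eqref{eq:nf-tnfm22}, where there is no $\|e\|_q$ factor, one only needs the elementary identity $\EE_e[ee^{\top}]=\tfrac1n I_n$, i.e.\ $\EE_e\la v,e\ra^2=\tfrac1n\|v\|_2^2$. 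Collecting the pieces gives \eqref{eq:tnfmq2} and \eqref{eq:nf-tnfm22}.

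For the lower bounds \eqref{eq:tnfm22} and \eqref{eq:tnfms} the same decomposition is used, but now a constant fraction of the signal must survive. For \eqref{eq:tnfm22} I would use $(A+B)^2\ge\tfrac12A^2-B^2$ (which is just $\tfrac12(A+2B)^2\ge0$) with $A=\la h,e\ra$, $B=Z+H$, then $\EE_eA^2=\tfrac1n\|h\|_2^2$ and $B^2\le2Z^2+2H^2\le\tfrac2m\sum_i\zeta(x,\xi_i)^2+2\Delta_\eta^2$. For \eqref{eq:tnfms} I would split $\la\tnmf(x),s\ra=(\la h,e\ra+Z+H)\la e,s\ra$, use $\EE_e[\la h,e\ra\la e,s\ra]=\tfrac1n\la h,s\ra$ (again $\EE_e[ee^{\top}]=\tfrac1n I_n$) for the exact part, and for the remainder bound $\bigl|\EE_e[(Z+H)\la e,s\ra]\bigr|\le\EE_e\bigl[(|Z|+|H|)\,|\la e,s\ra|\bigr]$ with $\EE_e|\la e,s\ra|\le\sqrt{\EE_e\la e,s\ra^2}=\tfrac{\|s\|_2}{\sqrt n}\le\tfrac{\|s\|_p}{\sqrt n}$ (using $p\in[1,2]$), $|Z|\le\tfrac1m\sum_i|\zeta(x,\xi_i)|$ and $|H|\le\Delta_\eta$.

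The computations above are essentially mechanical; the two points where some care is needed are: choosing the parameters in the $(a\pm b\pm c)^2$ splittings (and keeping the bookkeeping of $\zeta(x,\xi_i)$ and $\Delta_\eta$ consistent) so as to land exactly on the stated numerical constants, and --- in \eqref{eq:tnfm22} and \eqref{eq:tnfms} --- absorbing the signal--noise cross terms without destroying the leading $\tfrac1n\|\cdot\|_2^2$ coefficient. I do not expect any conceptual difficulty beyond this once the estimator is written in the displayed product form.
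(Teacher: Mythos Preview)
Your proposal is correct and follows essentially the same route as the paper. The paper writes $\tnmf(x)=\bigl(\la g^m(x,\vec{\xi_m}),e\ra+\tfrac1m\sum_i\theta(x,\xi_i,e)\bigr)e$ with $\theta=\zeta+\eta$ bundled together and then invokes $|\theta|\le|\zeta|+\Delta_\eta$, whereas you keep $Z$ and $H$ separate from the start; after that both arguments use exactly the same tools you list --- the splits $\|a+b\|^2\le 2\|a\|^2+2\|b\|^2$ and $(A+B)^2\ge\tfrac12A^2-B^2$, Lemma~\ref{Lm:MainTechLM} for the $q$-norm moments, the identity $\EE_e[ee^{\top}]=\tfrac1n I_n$ (cited in the paper as Lemma~B.10 of \cite{bogolubsky2016learning}) for the Euclidean pieces, and Jensen/Cauchy--Schwarz for $\EE_e|\la e,s\ra|\le\|s\|_2/\sqrt{n}$ and $(\tfrac1m\sum_i a_i)^2\le\tfrac1m\sum_i a_i^2$.
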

\begin{proof}
First of all, we rewrite $\tnmf(x)$ as follows
\begin{equation}
\notag
\tnmf(x) = \left(\left\la g^m(x,\vec{\xi_{m}}),e \right\ra + \frac{1}{m}\sum\limits_{i=1}^m\theta(x,\xi_i,e)\right)e,
\end{equation}
where
\begin{equation}
\notag
\theta(x,\xi_i,e) = \zeta(x,\xi_i) + \eta(x,\xi_i,e), \quad i=1,...,m. 
\end{equation}
By \eqref{eq:tf_def}, we have
\begin{equation}
\label{eq:thetaEst}
|\theta(x,\xi_i,e)| \leq |\zeta(x,\xi_i)| + \Delta_\eta.
\end{equation}

\textbf{Proof of \eqref{eq:tnfmq2}}.
\begin{equation}
	\begin{array}{rl}
		\EE_{e}\| \tnmf(x)\|_q^2
		= \EE_{e}\Big\|\left(\left\la g^m(x,\vec{\xi_{m}}),e \right\ra + \frac{1}{m}\sum\limits_{i=1}^m\theta(x,\xi_i,e)\right)e\Big\|_q^2 \\
		\overset{\circledOne}{\leqslant} 2\EE_{e}\|\la g^m(x,\vec{\xi_{m}}),e \ra e\|_q^2 + 2\EE_{e}\left\|\frac{1}{m}\sum\limits_{i=1}^m\theta(x,\xi_i,e)e\right\|_q^2 \\
		\overset{\circledTwo}{\leqslant} \frac{12\rho_n}{n}\|g^m(x,\vec{\xi_{m}})\|_2^2+\frac{2\rho_n}{m}\sum\limits_{i=1}^m\left(|\zeta(x,\xi_i)| + \Delta_\eta\right)^2 \leqslant \frac{12\rho_n}{n}\|g^m(x,\vec{\xi_{m}})\|_2^2+\frac{\rho_n}{m}\sum\limits_{i=1}^m\zeta(x,\xi_i)^2+16\rho_n \Delta_\eta^2,
	\end{array}
\end{equation}
where $\circledOne$ holds since $\|x+y\|_q^2 \leqslant 2\|x\|_q^2 + 2\|y\|_q^2, \forall x,y\in\R^n$; $\circledTwo$ follows from inequalities \eqref{assumption_e_norm},\eqref{assumption_e_product}, \eqref{eq:thetaEst} and the fact that, for any $a_1,a_2,\ldots,a_m > 0$, it holds that $\left(\sum\limits_{i=1}^ma_i\right)^2 \leqslant m\sum\limits_{i=1}^ma_i^2$.

\textbf{Proof of \eqref{eq:tnfm22}}.
\begin{equation}
	\begin{array}{rl}
		\EE_{e}\| \tnmf(x)\|_2^2
		= \EE_{e}\Big\|\left(\left\la g^m(x,\vec{\xi_{m}}),e \right\ra + \frac{1}{m}\sum\limits_{i=1}^m\theta(x,\xi_i,e)\right)e\Big\|_2^2 \\
		\overset{\circledOne}{\geqslant} \frac{1}{2}\EE_{e}\|\la g^m(x,\vec{\xi_{m}}),e \ra e\|_2^2 - \frac{1}{m}\sum\limits_{i=1}^m\left(|\zeta(x,\xi_i)| + \Delta_\eta\right)^2 
		\overset{\circledTwo}{\geqslant} \frac{1}{2n}\|g^m(x,\vec{\xi_{m}})\|_2^2 - \frac{1}{2m}\sum\limits_{i=1}^m\zeta(x,\xi_i)^2-8 \Delta_\eta^2,
	\end{array}
\end{equation}
where $\circledOne$ follows from \eqref{eq:thetaEst} and inequality $\|x+y\|_2^2 \geqslant \frac{1}{2}\|x\|_2^2 - \|y\|_2^2, \forall x,y\in\R^n$; $\circledTwo$ follows from $e \in S_2(1)$ and Lemma~B.10 in \cite{bogolubsky2016learning}, stating that, for any $s \in \R^n$, $\EE\la s,e\ra^2 = \frac{1}{n} \|s\|_2^2$.

\textbf{Proof of \eqref{eq:tnfms}}.
\begin{equation}
	\begin{array}{rl}
		\EE_{e} \langle \tnmf(x), s\rangle  = \EE_{e} \langle \la g^m(x,\vec{\xi_{m}}),e\ra e, s\rangle + \EE_{e}  \frac{1}{m}\sum\limits_{i=1}^m\theta(x,\xi_i,e) \la e, s \ra \\
		\overset{\circledOne}{\geqslant} \frac{1}{n}\la g^m(x,\vec{\xi_{m}}),s\ra - \frac{1}{m}\sum\limits_{i=1}^m\left(|\zeta(x,\xi_i)| + \Delta_\eta\right) \EE_{e}  |\la e,s \ra | \\
		\overset{\circledTwo}{\geqslant} \frac{1}{n}\la g^m(x,\vec{\xi_{m}}),s\ra - \frac{\|s\|_p}{2m\sqrt{n}}\sum\limits_{i=1}^m |\zeta(x,\xi_i)| - \frac{2\Delta_\eta \|s\|_p}{\sqrt{n}}
	\end{array}
\end{equation}
where $\circledOne$ follows from $\EE_{e}[n\la g, e \ra e] = g,\, \forall g\in\R^n$ and \eqref{eq:thetaEst}; 
$\circledTwo$ follows from Lemma~B.10 in \cite{bogolubsky2016learning}, since $\EE|\la s,e\ra| \leq \sqrt{\EE\la s,e\ra^2}$, and the fact that $\|x\|_2 \leqslant \|x\|_p$ for $p\leqslant2$.

\textbf{Proof of \eqref{eq:nf-tnfm22}}.
\begin{equation}
	\begin{array}{rl}
		\EE_{e} \|\langle \nabla f(x),e \ra e - \tnmf(x)\|_2^2 = \EE_{e} \left\|\la \nabla f(x),e \ra e - \la g^m(x,\vec{\xi_{m}}),e \ra e - \frac{1}{m}\sum\limits_{i=1}^m\theta(x,\xi_i,e)  e\right\|_2^2 \\
		\overset{\circledOne}{\leqslant}  2\EE_{e} \left\|\la \nabla f(x) -  g^m(x,\vec{\xi_{m}}),e \ra e \right\|_2^2 + 
		2\EE_{e} \left\| \frac{1}{m}\sum\limits_{i=1}^m\theta(x,\xi_i,e)  e\right\|_2^2  \\
		\overset{\circledTwo}{\leqslant} \frac{2}{n}\|\nabla f(x) -  g^m(x,\vec{\xi_{m}})\|_2^2 + \frac{1}{m}\sum\limits_{i=1}^m\zeta(x,\xi_i)^2+16 \Delta_\eta^2,
	\end{array}
\end{equation}
where $\circledOne$ holds since $\|x+y\|_2^2 \leqslant 2\|x\|_2^2 + 2\|y\|_2^2, \forall x,y\in\R^n$; 
$\circledTwo$ follows from $e \in S_2(1)$ and Lemma~B.10 in \cite{bogolubsky2016learning}, and \eqref{eq:thetaEst}.

\end{proof}

We continue by proving the following lemma which estimates the progress in step 7 of ARDD, which is a gradient step.
\begin{lemma}
\label{Lm:GradStep}
    Assume that $y = x - \frac{1}{2L_2}\tnmf(x)$.
		Then, 
		\begin{equation}		
		\label{eq:GradStepProgr}
		\begin{array}{rl}
		    \| g^m(x,\vec{\xi_{m}})\|_2^2 \leq 8nL_2 (f(x) - \EE_ef(y)) + 8 \|\nabla f(x) -  g^m(x,\vec{\xi_{m}})\|_2^2\\ + \frac{5n}{m}\sum\limits_{i=1}^m\zeta(x,\xi_i)^2+ 80n \Delta_\eta^2,
		\end{array}
		\end{equation}	
		where $g^m(x,\vec{\xi_{m}})$ is defined in Lemma \ref{Lm:MinBatcToGrad}, $\zeta(x,\xi_i)$ and $\Delta_\eta$ are defined in \eqref{eq:tf_def}.
\end{lemma}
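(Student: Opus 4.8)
The plan is to combine the $L_2$-smoothness of $f$ with the observation that $\tnmf(x)$ is collinear with the random direction $e$, and then to reduce the resulting cross term to the already established estimate \eqref{eq:nf-tnfm22}. Throughout I write $g := g^m(x,\vec{\xi_{m}})$ for brevity.

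First I would invoke the standard descent inequality coming from $L_2$-Lipschitz continuity of $\nabla f$, applied to the gradient step $y = x - \frac{1}{2L_2}\tnmf(x)$; substituting $y-x = -\frac{1}{2L_2}\tnmf(x)$ it reads
$$f(y) \leqslant f(x) - \frac{1}{2L_2}\la \nabla f(x), \tnmf(x)\ra + \frac{1}{8L_2}\|\tnmf(x)\|_2^2 .$$
Since $\tnmf(x)$ is a scalar multiple of $e$, we have $\la \nabla f(x), \tnmf(x)\ra = \la \la \nabla f(x), e\ra e, \tnmf(x)\ra$, so the polarization identity $\la a,b\ra = \tfrac12(\|a\|_2^2 + \|b\|_2^2 - \|a-b\|_2^2)$ turns this inner product into
$$\la \nabla f(x), \tnmf(x)\ra = \tfrac12\la \nabla f(x), e\ra^2 + \tfrac12\|\tnmf(x)\|_2^2 - \tfrac12\big\|\la \nabla f(x), e\ra e - \tnmf(x)\big\|_2^2 .$$
Taking $\EE_e$ and using $\EE_e \la \nabla f(x), e\ra^2 = \tfrac1n\|\nabla f(x)\|_2^2$ (Lemma~B.10 in \cite{bogolubsky2016learning}), the two $\|\tnmf(x)\|_2^2$ contributions combine into a single nonpositive term $-\tfrac{1}{8L_2}\EE_e\|\tnmf(x)\|_2^2$, which I would simply discard; this leaves
$$\EE_e f(y) \leqslant f(x) - \frac{1}{4nL_2}\|\nabla f(x)\|_2^2 + \frac{1}{4L_2}\EE_e\big\|\la \nabla f(x), e\ra e - \tnmf(x)\big\|_2^2 .$$

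Next I would substitute the bound \eqref{eq:nf-tnfm22} for the last expectation, rearrange to isolate $\|\nabla f(x)\|_2^2$ on the left, and multiply through by $4nL_2$, obtaining
$$\|\nabla f(x)\|_2^2 \leqslant 4nL_2\big(f(x) - \EE_e f(y)\big) + 2\|\nabla f(x) - g\|_2^2 + \frac{n}{m}\sum_{i=1}^m \zeta(x,\xi_i)^2 + 16 n \Delta_\eta^2 .$$
Finally, passing from $\|\nabla f(x)\|_2^2$ to $\|g\|_2^2$ via $\|g\|_2^2 \leqslant 2\|\nabla f(x)\|_2^2 + 2\|\nabla f(x) - g\|_2^2$ yields
$$\|g\|_2^2 \leqslant 8nL_2\big(f(x) - \EE_e f(y)\big) + 6\|\nabla f(x) - g\|_2^2 + \frac{2n}{m}\sum_{i=1}^m \zeta(x,\xi_i)^2 + 32 n \Delta_\eta^2 ,$$
whose coefficients $6$, $\tfrac{2n}{m}$, $32n$ on the nonnegative error terms are dominated by the coefficients $8$, $\tfrac{5n}{m}$, $80n$ in the statement, while the term $8nL_2(f(x)-\EE_e f(y))$ is common to both; hence the claimed inequality follows.

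No step here is genuinely difficult; the one idea worth isolating is exploiting the collinearity of $\tnmf(x)$ with $e$, which lets the cross term $\la\nabla f(x),\tnmf(x)\ra$ be rewritten via a squared Euclidean defect and absorbed by \eqref{eq:nf-tnfm22}, so that the whole argument stays in the $2$-norm and incurs no $p$-norm loss. The only care required is in the bookkeeping: discarding the correct nonpositive $\|\tnmf(x)\|_2^2$ term and verifying that the final numerical constants fit under those stated.
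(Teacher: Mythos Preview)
Your proof is correct and follows the same overall line as the paper's: apply the $L_2$-smoothness descent inequality to the step $y=x-\tfrac{1}{2L_2}\tnmf(x)$, exploit the collinearity of $\tnmf(x)$ with $e$ to rewrite the cross term, and then invoke \eqref{eq:nf-tnfm22}. The one technical difference is that the paper handles the cross term via the Fenchel inequality $\la s,z\ra - \tfrac{L_2}{2}\|z\|_2^2 \leq \tfrac{1}{2L_2}\|s\|_2^2$ and then lower-bounds $\EE_e\|\tnmf(x)\|_2^2$ by \eqref{eq:tnfm22}, landing directly on $\|g^m\|_2^2$; you instead use the polarization identity to extract $\la\nabla f(x),e\ra^2$ explicitly, discard the leftover $-\tfrac{1}{8L_2}\|\tnmf(x)\|_2^2$, and convert $\|\nabla f(x)\|_2^2$ to $\|g^m\|_2^2$ at the end. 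Your route avoids \eqref{eq:tnfm22} altogether and produces the sharper constants $6,\,2n/m,\,32n$ in place of $8,\,5n/m,\,80n$, so it is a modest simplification; the paper's route has the mild advantage of arriving at $\|g^m\|_2^2$ in one step without the final triangle-inequality doubling.
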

\begin{proof}
Since $\tnmf(x)$ is collinear to $e$, we have that, for some $\gamma \in \R$, $y-x = \gamma e$. Then, since $\|e\|_2=1$,
\begin{equation*}
    \begin{array}{rl}
	    \langle \nabla f(x), y-x \rangle = \langle \nabla f(x), e \rangle \gamma = \langle \nabla f(x), e \rangle \langle e, y-x \rangle = \langle \langle \nabla f(x),e \rangle e, \, y-x\rangle.
	\end{array}
\end{equation*}
From this and $L_2$-smoothness of $f$ we obtain
\begin{equation*}
	\begin{array}{rl}
	    f(y) \leqslant f(x) + \langle \langle \nabla f(x),e \rangle e, \, y-x\rangle + \frac{L_2}{2}||y-x||_2^2 \\
	    \leqslant f(x) + \langle  \tnmf(x), \, y-x\rangle + L_2||y-x||_2^2  + \langle \langle \nabla f(x),e \rangle e - \tnmf(x), \, y-x\rangle - \frac{L_2}{2}||y-x||_2^2 \\
        \overset{\circledOne}{\leqslant} f(x) + \langle \tnmf(x), \, y-x\rangle + L_2||y-x||_2^2  + \frac{1}{2L_2}\| \langle \nabla f(x),e \rangle e - \tnmf(x)\|_2^2, 
	\end{array}
\end{equation*}
where $\circledOne$ follows form the Fenchel inequality $\la s,z \ra - \frac{\zeta}{2}\|z\|_2^2 \leq \frac{1}{2\zeta}\|s\|_2^2$.
Using $y = x-\frac{1}{2L_2}\tnmf(x)$, we get
\begin{equation*}
    \begin{array}{rl}
        \frac{1}{4L_2} \|\tnmf(x)\|_2^2 \leqslant f(x)-f(y) + \frac{1}{2L_2}\|\langle \nabla f(x),e \rangle e - \tnmf(x)\|_2^2
    \end{array}
\end{equation*}
Taking the expectation in $e$ and applying \eqref{eq:tnfm22}, \eqref{eq:nf-tnfm22}, we obtain
\begin{equation*}
    \begin{array}{rl}
    \frac{1}{4L_2} \left(\frac{1}{2n}\|g^m(x,\vec{\xi_{m}})\|_2^2 - \frac{1}{2m}\sum\limits_{i=1}^m\zeta(x,\xi_i)^2-8 \Delta_\eta^2\right)   \leqslant \frac{1}{4L_2} \EE_e\|\tnmf(x)\|_2^2 \\
		\leqslant f(x)-\EE_ef(y) + \frac{1}{2L_2}\EE_e\|\langle \nabla f(x),e \rangle e - \tnmf(x)\|_2^2 \\
		 \leqslant f(x)-\EE_ef(y) + \frac{1}{2L_2} \left( \frac{2}{n}\|\nabla f(x) -  g^m(x,\vec{\xi_{m}})\|_2^2 + \frac{t^2}{m}\sum\limits_{i=1}^m\zeta(x,\xi_i)^2+16 \Delta_\eta^2\right),
    \end{array}
\end{equation*}
Rearranging the terms, we obtain the statement of the lemma.
\end{proof}

We are now ready to finish the proof of Lemma \ref{Lm:ARDFDSDeltaMeaning}.
\begin{proof}[Proof of Lemma \ref{Lm:ARDFDSDeltaMeaning}.]
    Taking the expectation w.r.t. all randomness\footnote{Note that we use $s = z_k-x_*$ which does not depend on $\xi_1,\xi_2,\ldots,\xi_m$ from the $(k+1)$-th iterate and it does not depend on $e_{k+1}$. Therefore we can use tower property of mathematical expectation and take firstly conditional expectation w.r.t. $\xi_1,\ldots,\xi_m$ and after that take full expectation.} of \eqref{eq:tnfms} and using inequality $$\EE[|\zeta(x,\xi_i)|] \leqslant \sqrt{\EE[|\zeta(x,\xi_i)|^2]} \overset{\eqref{eq:tf_def}}{\leqslant} \sqrt{\Delta_\zeta},$$ we obtain inequality \eqref{eq:AssumInPr} with $\delta_1 = \frac{\sqrt{\Delta_\zeta}}{2\sqrt{n}} + \frac{2\Delta_\eta}{\sqrt{n}}$. Combining \eqref{eq:tnfmq2} and \eqref{eq:GradStepProgr}, taking the full  expectation and using $\EE[\|\nabla f(x) - g^m(x,\xi)\|_2^2] \leqslant\frac{\sigma^2}{m}$, which follows from \eqref{stoch_assumption_on_variance}, we obtain \eqref{eq:AssumQNorm} with $\delta_2 = \frac{96\rho_n}{n}\cdot\frac{\sigma^2}{m}+ 61\rho_n\Delta_\zeta + 976\rho_n\Delta_\eta^2$.
\end{proof}

\section{Proof of main result for RDD method}
\label{sec:rdfds}
As in the previous section, we divide the proof of Theorem \ref{theorem_convergence_mini_gr_free_non_acc} into large steps. First, to simplify the derivations, we prove this theorem assuming two additional inequalities which connect or noisy stochastic approximation of the gradient \eqref{eq:MiniBatcStocGrad} with the true gradient and function values. Then we show that our approximation of the gradient \eqref{eq:MiniBatcStocGrad} indeed satisfies these two inequalities.

\begin{lemma}
\label{Lm:RDSLikeConv}
    Let $\{x_k,y_k,z_k\}$, $k \geqslant 0$ be generated by RDD method. Assume that there exist numbers $\delta_1>0$,$\delta_2>0$ such that, for all $k\geqslant 0$
    \begin{equation}\label{eq:AssumDelta1_nonAcc}
        \EE\left[\left\la\tnmf(x_{k}), x_k-x_*\right\ra\right] \geqslant\frac{1}{n}\EE\left[\left\la\nabla f(x_k),\, x_k-x_* \right\ra\right] - \delta_1\EE\left[\|x_k-x_*\|_p\right]
    \end{equation}
    \begin{equation}\label{eq:AssumDelta2_nonAcc}
        \EE\left[\|\tnmf(x_k)\|_q^2\right] \leqslant \frac{48\rho_nL_2}{n}\left(\EE\left[f(x_{k})\right] - f(x_*)\right) + \delta_2,
    \end{equation}
    where expectation is taken w.r.t. all randomness and $x_*$ is a solution to \eqref{eq:PrSt}.
    Then
    \begin{equation}\label{eq:RDSLikeConv}
        \begin{array}{rl}
        \EE[f(\bar{x}_N)] - f(x_*) \leqslant \frac{384n\rho_nL_2\Theta_p}{N} + \frac{n}{12\rho_nL_2}\delta_2 + \frac{8n\sqrt{2\Theta_p}}{N}\delta_1 + \frac{nN}{3L_2\rho_n}\delta_1^2,
        \end{array}
    \end{equation}
    where $\Theta_p = V[z_0](x^*)$ is defined by the chosen proximal setup and the expectation is taken w.r.t. all randomness.
\end{lemma}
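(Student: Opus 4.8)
The plan is to follow the same two-stage scheme as in the proof of Lemma~\ref{Lm:ARDFDSLikeConv}, specialized to the plain Mirror Descent iteration of the RDD method. The starting point is Lemma~\ref{Lm:MDStep} applied to step~5 of RDD, i.e.\ with $z = x_k$, $z_{+} = x_{k+1}$ and $u = x_*$, which gives
\begin{equation*}
\alpha n\,\EE[\langle \tnmf(x_k),\, x_k - x_*\rangle] \leqslant \frac{\alpha^2 n^2}{2}\,\EE[\|\tnmf(x_k)\|_q^2] + \EE[V[x_k](x_*)] - \EE[V[x_{k+1}](x_*)].
\end{equation*}
I would lower-bound the left-hand side by assumption \eqref{eq:AssumDelta1_nonAcc} together with convexity, $\langle\nabla f(x_k),x_k-x_*\rangle\geqslant f(x_k)-f(x_*)$, and upper-bound the first term on the right by \eqref{eq:AssumDelta2_nonAcc}. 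The key is the choice $\alpha=\frac{1}{48n\rho_nL_2}$, for which $\alpha n\cdot 48\rho_nL_2 = 1$: the term proportional to $\EE[f(x_k)]-f(x_*)$ generated on the right by \eqref{eq:AssumDelta2_nonAcc} equals $\frac{\alpha}{2}(\EE[f(x_k)]-f(x_*))$ and cancels exactly half of the identical term on the left. Writing $R_k := \EE[\|x_k-x_*\|_p]$, recalling $\Theta_p = V[x_0](x_*)$, and rearranging, one obtains the one-step inequality
\begin{equation*}
\frac{\alpha}{2}(\EE[f(x_k)]-f(x_*)) + \EE[V[x_{k+1}](x_*)] \leqslant \alpha n\delta_1 R_k + \frac{\alpha^2 n^2}{2}\delta_2 + \EE[V[x_k](x_*)].
\end{equation*}

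This one-step inequality is then used twice. Telescoping it over $k=0,\dots,N-1$, discarding the nonnegative $\EE[V[x_N](x_*)]$, and applying Jensen's inequality $f(\bar x_N)\leqslant\frac1N\sum_{k=0}^{N-1}f(x_k)$ yields
\begin{equation*}
\frac{\alpha N}{2}(\EE[f(\bar x_N)]-f(x_*)) \leqslant \Theta_p + \alpha n\delta_1\sum_{k=0}^{N-1}R_k + \frac{\alpha^2 n^2 N}{2}\delta_2 .
\end{equation*}
Telescoping the \emph{same} inequality but instead discarding the nonnegative terms $\EE[f(x_k)]-f(x_*)$ gives $\EE[V[x_l](x_*)]\leqslant \Theta_p + \alpha n\delta_1\sum_{k=0}^{l-1}R_k + \frac{\alpha^2 n^2 l}{2}\delta_2$ for every $l\leqslant N$; combined with the strong-convexity estimate $\frac12 R_l^2\leqslant\frac12\EE[\|x_l-x_*\|_p^2]\leqslant\EE[V[x_l](x_*)]$ this produces the self-referential recursion
\begin{equation*}
R_l^2 \leqslant 2\Theta_p + \alpha^2 n^2 N\delta_2 + 2\alpha n\delta_1\sum_{k=0}^{l-1}R_k , \qquad l = 1,\dots,N .
\end{equation*}

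To close the argument I would peel off the $k=0$ term via $R_0\leqslant\sqrt{2\Theta_p}$ (which follows from \eqref{eq:VStrConv}) and absorb it, together with the $\Theta_p$ and $\delta_2$ contributions, into a single free term $\widetilde\Theta := \Theta_p + \alpha n\delta_1\sqrt{2\Theta_p} + \frac{\alpha^2 n^2 N}{2}\delta_2$, so that $R_l^2\leqslant 2\widetilde\Theta + 2\alpha n\delta_1\sum_{k=1}^{l-1}R_k$. Applying to this truncated recursion the elementary lemma already used in the proof of Lemma~\ref{Lm:ARDFDSLikeConv} (Lemma~\ref{stoh:technical_lemma}) bounds the whole tail $\alpha n\delta_1\sum_{k=1}^{N-1}R_k$ by an expression of the form $(\sqrt{\widetilde\Theta} + c\,\alpha n\delta_1 N)^2$; expanding via $(a+b)^2\leqslant 2a^2+2b^2$ turns this into a multiple of $\widetilde\Theta$ plus a pure $\delta_1^2$ term, which is exactly why the only linear-in-$\delta_1$ contribution surviving into the final estimate is the one originating from $R_0$ — hence the $\frac1N$-scaling of the term $\frac{8n\sqrt{2\Theta_p}}{N}\delta_1$ in \eqref{eq:RDSLikeConv}. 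Substituting the resulting bound on $\sum_{k=0}^{N-1}R_k$ into the first telescoped inequality, dividing by $\frac{\alpha N}{2}$, inserting $\alpha=\frac1{48n\rho_nL_2}$ and regrouping with the crude inequalities $\sqrt{a+b}\leqslant\sqrt a+\sqrt b$ and $2ab\leqslant a^2+b^2$ produces the four terms of \eqref{eq:RDSLikeConv}, with some slack in the absolute constants. The only genuinely delicate step is this last one — unwinding the self-referential dependence of $R_l$ on $\sum_{k<l}R_k$ while keeping the $\delta_1$, $\delta_1^2$ and $\delta_2$ contributions cleanly separated; everything else parallels the accelerated case almost verbatim.
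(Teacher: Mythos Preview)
Your proposal is correct and follows essentially the same route as the paper: apply Lemma~\ref{Lm:MDStep} to the RDD step, combine with the two assumptions and convexity to obtain a one-step inequality, telescope, derive the self-referential recursion on $R_l = \EE\|x_l-x_*\|_p$ via $\frac12 R_l^2\leqslant\EE[V[x_l](x_*)]$, close it with a technical lemma, and substitute back. One small correction: the paper invokes the constant-stepsize analogue Lemma~\ref{stoh:technical_lemma_non_acc} rather than Lemma~\ref{stoh:technical_lemma}, since the latter has the accelerated weights $\alpha_{k+1}=\frac{k+2}{96n^2\rho_nL_2}$ hard-wired into its statement and does not literally apply here; the constant-$\alpha$ version you need is the same induction with $\alpha_{k+1}$ replaced by $\alpha$ throughout, yielding $\sum_{k=0}^{l-1}a_k+b\alpha\sum_{k=1}^{l-1}R_k\leqslant\bigl(\sqrt{\sum_{k=0}^{l-1}a_k}+\sqrt2\,b\alpha l\bigr)^2$. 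Your computation of the cancellation coefficient as $\frac{\alpha}{2}$ is in fact sharper than the paper's labelled $\frac{\alpha}{4}$; either way the stated constants in \eqref{eq:RDSLikeConv} are attainable with the slack you acknowledge.
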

This result is proved below in subsection \ref{S:RDD_pr_1}.

\begin{lemma}
\label{Lm:RDFDSDeltaMeaning}
    Let $\{x_k,y_k,z_k\}$, $k \geqslant 0$ be generated by RDD method.
    Then \eqref{eq:AssumDelta1_nonAcc} and \eqref{eq:AssumDelta2_nonAcc} hold with
    \begin{equation}\label{eq:RDFDSDelta1}
        \delta_1 = \frac{\sqrt{\Delta_\zeta}}{2\sqrt{n}} + \frac{2\Delta_\eta}{\sqrt{n}}
    \end{equation}
    and
    \begin{equation}\label{eq:RDFDSDelta2}
        \delta_2 = \frac{24\rho_n}{n}\cdot\frac{\sigma^2}{m}+ \rho_n\Delta_\zeta + 16\rho_n\Delta_\eta^2.
    \end{equation}
\end{lemma}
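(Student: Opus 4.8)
The plan is to follow exactly the template of the proof of Lemma \ref{Lm:ARDFDSDeltaMeaning}, reusing the single toolbox Lemma \ref{Lm:MinBatcToGrad}, but with one substitution: since RDD performs no gradient step, the gradient-step estimate of Lemma \ref{Lm:GradStep} is not available, and its role must be played by a direct smoothness--convexity inequality for $f$.

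For \eqref{eq:AssumDelta1_nonAcc}, I would fix $k$ and apply the estimate \eqref{eq:tnfms} of Lemma \ref{Lm:MinBatcToGrad} at the point $x=x_k$ with the choice $s=x_k-x_*$. The key measurability remark is that $x_k$ depends only on $e_1,\dots,e_k$ and on the realizations of $\xi$ used in the first $k$ iterations, hence is independent of $e_{k+1}$ and of $\xi_1,\dots,\xi_m$ drawn at iteration $k+1$; therefore one may first take the conditional expectation over the iteration-$(k+1)$ randomness (treating $s$ as fixed, as in the footnote of the accelerated proof) and then the full expectation. Using $\EE_\xi\, g^m(x_k,\vec{\xi_{m}})=\nabla f(x_k)$ together with $\EE[|\zeta(x_k,\xi_i)|]\leqslant\sqrt{\EE[\zeta(x_k,\xi_i)^2]}\leqslant\sqrt{\Delta_\zeta}$ (from \eqref{eq:tf_def}) and $\|s\|_2\leqslant\|s\|_p$ yields \eqref{eq:AssumDelta1_nonAcc} with $\delta_1$ as in \eqref{eq:RDFDSDelta1}, identically to the accelerated case.

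For \eqref{eq:AssumDelta2_nonAcc}, I would start from \eqref{eq:tnfmq2} of Lemma \ref{Lm:MinBatcToGrad} at $x=x_k$, which bounds $\EE_e\|\tnmf(x_k)\|_q^2$ by $\tfrac{12\rho_n}{n}\|g^m(x_k,\vec{\xi_{m}})\|_2^2$ plus $\tfrac{\rho_n}{m}\sum_i\zeta(x_k,\xi_i)^2+16\rho_n\Delta_\eta^2$. Then split $\|g^m(x_k,\vec{\xi_{m}})\|_2^2\leqslant 2\|\nabla f(x_k)\|_2^2+2\|g^m(x_k,\vec{\xi_{m}})-\nabla f(x_k)\|_2^2$, take full expectation, and use: (i) $\EE[\|g^m(x_k,\vec{\xi_{m}})-\nabla f(x_k)\|_2^2]\leqslant\sigma^2/m$, which follows from \eqref{stoch_assumption_on_variance} by independence of the $\xi_i$; (ii) the standard inequality for an $L_2$-smooth convex function with minimizer $x_*$, $\|\nabla f(x_k)\|_2^2\leqslant 2L_2\bigl(f(x_k)-f(x_*)\bigr)$; and (iii) $\EE[\zeta(x_k,\xi_i)^2]\leqslant\Delta_\zeta$. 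Collecting terms gives $\EE[\|\tnmf(x_k)\|_q^2]\leqslant\tfrac{48\rho_nL_2}{n}(\EE[f(x_k)]-f(x_*))+\tfrac{24\rho_n}{n}\cdot\tfrac{\sigma^2}{m}+\rho_n\Delta_\zeta+16\rho_n\Delta_\eta^2$, i.e. \eqref{eq:AssumDelta2_nonAcc} with $\delta_2$ as in \eqref{eq:RDFDSDelta2}.

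The only non-routine point --- the ``obstacle'' --- is step (ii): in the accelerated method $\|g^m\|_2^2$ was absorbed into the telescoping difference $f(x_{k+1})-f(y_{k+1})$, whereas here the right-hand side of \eqref{eq:AssumDelta2_nonAcc} must instead feature $f(x_k)-f(x_*)$ directly, which is precisely what $\|\nabla f(x_k)\|_2^2\leqslant 2L_2(f(x_k)-f(x_*))$ provides; the factor-two loss from splitting $\|g^m\|_2^2$ and the factor two inside the smoothness inequality together explain the constants $48$ and $24$ in the statement. Everything else is bookkeeping with the four estimates of Lemma \ref{Lm:MinBatcToGrad}, and the argument is uniform in $k\geqslant 0$.
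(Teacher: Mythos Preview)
Your proposal is correct and follows essentially the same route as the paper: for \eqref{eq:AssumDelta1_nonAcc} the paper likewise takes full expectation in \eqref{eq:tnfms} with $s=x_k-x_*$ via the tower property, and for \eqref{eq:AssumDelta2_nonAcc} it combines \eqref{eq:tnfmq2} with the split $\|g^m\|_2^2\leqslant 2\|\nabla f\|_2^2+2\|\nabla f-g^m\|_2^2$, the smoothness--convexity bound $\|\nabla f(x_k)\|_2^2\leqslant 2L_2(f(x_k)-f(x_*))$, and $\EE\|\nabla f-g^m\|_2^2\leqslant\sigma^2/m$. Your identification of the ``obstacle'' and the resulting constants $48$ and $24$ is exactly right.
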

This result is proved below in subsection \ref{S:RDD_pr_2}.

\begin{proof}[Proof of Theorem \ref{theorem_convergence_mini_gr_free_non_acc}.]
    Combining Lemma~\ref{Lm:RDSLikeConv} and Lemma~\ref{Lm:RDFDSDeltaMeaning}, we obtain \eqref{theo_main_result_mini_gr_free}.
\end{proof}

\subsection{Proof Lemma \ref{Lm:RDSLikeConv}}
\label{S:RDD_pr_1}



    Combining \eqref{eq:MDStepProgr}, \eqref{eq:AssumDelta1_nonAcc} and \eqref{eq:AssumDelta2_nonAcc} we get
    \begin{equation*}
        \begin{array}{rl}
            \alpha\EE\left[\left\la\nabla f(x_k),\, x_k-x_*\right\ra\right] \leqslant 24\alpha^2n\rho_nL_2\left(\EE\left[f(x_{k})\right] - f(x_*)\right) + \alpha\delta_1n\EE\left[\|x_k-x_*\|_p\right] + \frac{\alpha^2n^2}{2}\delta_2\\
            + \EE\left[V[x_k](x_*)\right] - \EE\left[V[x_{k+1}](x_*)\right],
        \end{array}
    \end{equation*}
whence due to convexity of $f$ we have
\begin{equation}\label{theo_ds_der_free:pre_final}
    \begin{array}{rl}
        \underbrace{(\alpha - 24\alpha^2n\rho_nL_2)}_{\frac{\alpha}{4}}\left(\EE[f(x_k)] - f(x_*)\right) \leqslant \alpha\delta_1n\EE\left[\|x_k-x_*\|_p\right] + \frac{\alpha^2n^2}{2}\delta_2\\
        + \EE[V[x_k](x_*)] - \EE[V[x_{k+1}](x_*)],
    \end{array} 
\end{equation}
because $\alpha = \frac{1}{48n\rho_nL_2}$. Summing \eqref{theo_ds_der_free:pre_final} for $k=0,\ldots,l-1$, where $l \leqslant N$ we get
\begin{equation}\label{theo_ds_der_free:after_telescoping}
    \begin{array}{rl}
        0 \leqslant \frac{N\alpha}{4}\left(\EE[f(\bar{x}_l)] - f(x_*)\right) \leqslant \frac{\alpha^2n^2l}{2}\delta_2 + \alpha\delta_1n\sum\limits_{k=0}^{l-1}\EE[\|x_k-x_*\|_p]\\
        + \underbrace{V[x_0](x_*)}_{\Theta_p} - \EE[V[x_{l}](x^*)],
    \end{array}
\end{equation}
where $\bar{x}_l \overset{\text{def}}{=} \frac{1}{l}\sum\limits_{k=0}^{l-1}x_k$.
From the previous inequality we get
\begin{equation}\label{theo_ds_der_free:distance_estimation}
    \begin{array}{rl}
        \frac{1}{2}\left(\EE[\|x_l-x_*\|_p]\right)^2 \leqslant \frac{1}{2}\EE[\|x_l-x_*\|_p^2] \leqslant \EE[V[x_l](x_*)]\\
        \leqslant \Theta_p + l\cdot\frac{\alpha^2n^2}{2}\delta_2
        +\alpha\delta_1n\sum\limits_{k=0}^{l-1}\EE[\|x_k-x_*\|_p],
    \end{array}
\end{equation}
whence $\forall l\leqslant N$ we obtain
\begin{equation}
    \EE[\|x_k-x_*\|_p] \leqslant \sqrt{2}\sqrt{\Theta_p + l\cdot\frac{\alpha^2n^2}{2}\delta_2
    +\alpha\delta_1n\sum\limits_{k=0}^{l-1}\EE[\|x_k-x_*\|_p]}.
\end{equation}
Denote $R_k = \EE[\|x^*-x_k\|_p]$ for $k=0,\ldots,N$. Applying Lemma~\ref{stoh:technical_lemma_non_acc} for $a_0 = \Theta_p + \alpha\delta_1n\EE[\|x_0-x_*\|_p] \leqslant \Theta_p + \alpha n\sqrt{2\Theta_p}\delta_1, a_{k} = \frac{\alpha^2n^2}{2}\delta_2, b = n\delta_1$ for $k=1,\ldots,N-1$ we have for $l=N$
\begin{equation*}
    \begin{array}{rl}
        \frac{N\alpha}{4}\left(\EE[f(\bar{x}_N)] - f(x_*)\right)\\ 
        \leqslant \left(\sqrt{\Theta_p+N\cdot\frac{\alpha^2n^2}{2}\delta_2 + \alpha n\sqrt{2\Theta_p}\delta_1} + \sqrt{2}n\delta_1\alpha N\right)^2\\
        \overset{\circledOne}{\leqslant} 2\Theta_p + N\alpha^2n^2\delta_2 + 2\alpha n\sqrt{2\Theta_p}\delta_1 + 4n^2\delta_1^2\alpha^2N^2,
    \end{array}
\end{equation*}
whence
\begin{equation*}
    \begin{array}{rl}
        \EE[f(\bar{x}_N)] - f(x_*) \leqslant \frac{384n\rho_nL_2\Theta_p}{N} + \frac{n}{12\rho_nL_2}\delta_2 + \frac{8n\sqrt{2\Theta_p}}{N}\delta_1 + \frac{nN}{3L_2\rho_n}\delta_1^2,
    \end{array}
\end{equation*}
because $\alpha = \frac{1}{48n\rho_nL_2}$. 
\qed

\subsection{Proof Lemma \ref{Lm:RDFDSDeltaMeaning}}
\label{S:RDD_pr_2}

    Taking mathematical expectation w.r.t. all randomness from the \eqref{eq:tnfms} we obtain\footnote{Note that we use $s = x_k-x_*$ which does not depend on $\xi_1,\xi_2,\ldots,\xi_m$ from the $(k+1)$-th iterate and it does not depend on $e_{k+1}$. Therefore we can use tower property of mathematical expectation and take firstly conditional expectation w.r.t. $\xi_1,\ldots,\xi_m$ and after that take full expectation.} inequality \eqref{eq:AssumDelta1_nonAcc} with $\delta_1 = \frac{\sqrt{\Delta_\zeta}}{2\sqrt{n}} + \frac{2\Delta_\eta}{\sqrt{n}}$, because $\EE[|\zeta(x,\xi_i)|] \leqslant \sqrt{\EE[|\zeta(x,\xi_i)|^2]} \overset{\eqref{eq:tf_def}}{\leqslant} \sqrt{\Delta_\zeta}$. Combining \eqref{eq:tnfmq2} and
    \begin{equation*}
        \begin{array}{rl}
            \|g^m(x,\vec{\xi}_m)\|_2^2 \leqslant 2\|\nabla f(x)\|_2^2 + 2\|\nabla f(x) - g^m(x,\vec{\xi}_m)\|_2^2 \leqslant 4L_2\left(\EE[f(x)] - f(x_*)\right) + 2\|\nabla f(x) - g^m(x,\vec{\xi}_m)\|_2^2,\\
            \EE[\|\nabla f(x) - g^m(x,\vec{\xi}_m)\|_2^2] \leqslant \frac{\sigma^2}{m}
        \end{array}
    \end{equation*}
    and taking full mathematical expectation we obtain \eqref{eq:AssumDelta2_nonAcc} with $\delta_2 = \frac{24\rho_n}{n}\cdot\frac{\sigma^2}{m}+ \rho_n\Delta_\zeta + 16\rho_n\Delta_\eta^2$.
    \qed


\section{Proofs for strongly convex problems}
\label{S:SCProofs}



\subsection{Accelerated algorithm}

\begin{lemma}
\label{Lm:ACDS_sc_auxil}
Assume that we start ARDD Algorithm \ref{Alg:ARDS} from a random point $x_0$ such that $\EE_{x_0} \|x^*-x_0\|_p^2 \leqslant R_p^2$,
use the function $R_p^2 d \left( \frac{x-x_0}{R_p} \right)$ as the prox-function and run ARDD for $N_0$ iterations. 
Then
\begin{align}
& \EE[f(y_{N_0})] - f^* \leqslant \frac{aL_2R_p^2\Omega_p  }{N_0^2} + \frac{b \sigma^2 N_0}{mL_2} +  \Delta, \notag
\end{align}
where $a= 384 n^2 \rho_n$, $b=\frac{4}{n}$, $$\Delta = \frac{61N_0}{24L_2}\Delta_\zeta + \frac{122N_0}{3L_2}\Delta_\eta^2
        + \frac{12\sqrt{2nR_p^2\Omega_p}}{N_0^2}\left(\frac{\sqrt{\Delta_\zeta}}{2}+ 2\Delta_\eta\right)
        + \frac{N_0^2}{12n\rho_nL_2} \left(\frac{\sqrt{\Delta_\zeta}}{2} 
        + 2\Delta_\eta\right)^2$$ and the expectation is taken with respect to all the randomness.
\end{lemma}

\begin{proof}
Note that $R_p^2 d \left( \frac{x-x_0}{R_p} \right)$ is strongly convex with constant 1 w.r.t $\|\cdot\|_p$.
Since $0 = \arg \min d(x)$, we have, for the prox-function $\bar{d}(x) = R_p^2 d \left( \frac{x-x_0}{R_p} \right)$ and corresponding Bregman divergence $\bar{V}[x_0](x)$,
$$
\Theta_p = \bar{V}[x_0](x_*) = \bar{d}(x_{*}) - \bar{d}(x_0) - \la \nabla \bar{d}(x_0), x_{*} - x_0 \ra = \bar{d}(x_{*}) \leq \frac{R_p^2\Omega_p}{2}.
$$
Applying Theorem \ref{Th:ARDFDSConv} an taking additional expectation w.r.t to $x_0$, we finish the proof of the lemma.

\end{proof}

\begin{proof}[Proof of Theorem \ref{Th:ACDS_sc_rate}]
We prove by induction that
\begin{equation}
\EE \|u_{k}- x^*\|_p^2 \leq R_k^2 = R_p^2 2^{-k} + \frac{4 \Delta}{\mu_p}\left(1-2^{-k}\right).
\end{equation}
For $k=0$, this inequality obviously holds. Let us assume that it holds for some $k \geq 0$ and prove the induction step.
Applying Lemma \ref{Lm:ACDS_sc_auxil} at the step $k$ of Algorithm \ref{ACDS_sc}, we obtain that
$$
\EE f(u_{k+1}) - f^* = \EE f(y_{N_0}) - f^* \leqslant \frac{aL_2R_k^2\Omega_p  }{N_0^2} + \frac{b \sigma^2 N_0}{m_kL_2} +  \Delta.
$$
By definition of $N_0$, we have
$$
\frac{aL_2R_k^2\Omega_p  }{N_0^2} \leqslant \frac{aL_2R_k^2\Omega_p}{\frac{8 a L_2\Omega_p }{\mu_p}} = \frac{\mu_p R_k^2}{8}.
$$
By definition of $m_k$, we have
\begin{align}
& m_{k} \geqslant \frac{8b \sigma^2 N_0}{L_2 \mu_p R_p^2 2^{-k}} \geqslant \frac{8b \sigma^2 N_0}{L_2 \mu_p \left( R_p^2 2^{-k} + \frac{4 \Delta}{\mu_p}\left(1-2^{-k}\right) \right) }   = \frac{8b \sigma^2 N_0}{L_2 \mu_p R_k^2 } \notag
\end{align}
and
$$
\frac{b \sigma^2 N_0}{m_kL_2} \leqslant \frac{b \sigma^2 N_0}{L_2\frac{8b \sigma^2 N_0}{L_2 \mu_p R_k^2 }} = \frac{\mu_p R_k^2}{8}.
$$
Hence,
\begin{equation*}
    \begin{array}{rl}
        \EE f(u_{k+1}) - f^* \leqslant \frac{\mu_p R_k^2}{4} + \Delta = \frac{\mu_p }{4} \left( R_p^2 2^{-k} + \frac{4 \Delta}{\mu_p}\left(1-2^{-k}\right) \right) + \Delta\\
        = \frac{\mu_p }{2} \left( R_p^2 2^{-(k+1)} + \frac{4 \Delta}{\mu_p}\left(1-2^{-(k+1)}\right) \right) = \frac{\mu_p R_{k+1}^2 }{2}.
    \end{array}
\end{equation*}
Since $f$ is strongly convex, we have
$$
\EE \|u_{k+1}- x^*\|_p^2 \leqslant \frac{2}{\mu_p} (\EE f(u_{k+1}) - f^* ) \leqslant R_{k+1}^2.
$$
This finishes the induction step and, as a byproduct, we obtain inequality \eqref{eq:ARDFDSSConv}.

It remains to estimate the complexity. To make the right hand side of \eqref{eq:ARDFDSSConv} smaller than $\e$ it is sufficient to choose  $K=\left\lceil\log_2 \frac{\mu_p R_p^2}{\e}\right\rceil$.
To estimate the total number of oracle calls, we write
\begin{align}
\text{Number of calls} = \sum_{k=0}^{K-1}{N_0 m_k} &\leqslant \sum_{k=0}^{K-1}{N_0 \left(1 + \frac{8b \sigma^2 N_0 2^{k}}{L_2\mu_p R_p^2 } \right)}  \leqslant K N_0+\frac{8b \sigma^2 N_0^2 2^{K}}{L_2\mu_p R_p^2  }  \notag \\
& \leqslant \sqrt{\frac{8 a L_2\Omega_p }{\mu_p}} \log_2 \frac{\mu_p R_p^2 }{\e} + \frac{8b \sigma^2 }{L_2\mu_p R_p^2 } \cdot \frac{8 a L_2\Omega_p }{\mu_p} \cdot  \frac{\mu_p R_p^2 }{ \e} \notag \\
& \leqslant\sqrt{\frac{8 a L_2\Omega_p }{\mu_p}}\log_2 \frac{\mu_p R_p^2 }{ \e} + \frac{64 ab \sigma^2 \Omega_p  }{\mu_p \e }\notag \\
&= \widetilde{O}\left(\max\left\{n^{\frac12+\frac{1}{q}}\sqrt{\frac{L_2\Omega_p }{\mu_p}}\log_2 \frac{\mu_p R_p^2 }{ \e},\frac{n^{\frac{2}{q}}\sigma^2 \Omega_p}{\mu_p \e}\right\}\right), \notag
\end{align}
where we used that $a= 384 n^2 \rho_n$, $b=\frac{4}{n}$ and $\rho_n$ is given in Lemma \ref{Lm:MainTechLM}.

\end{proof}

\subsection{Non-accelerated algorithm}
\begin{lemma}
\label{Lm:CDS_sc_auxil}
Assume that we start RDD Algorithm \ref{Alg:RDFDS} from a random point $x_0$ such that $\EE_{x_0} \|x^*-x_0\|_p^2 \leqslant R_p^2$,
use the function $R_p^2 d \left( \frac{x-x_0}{R_p} \right)$ as the prox-function and run RDD for $N_0$ iterations. 
Then
\begin{align}
& \EE[f(y_{N_0})] - f^* \leqslant \frac{aL_2R_p^2\Omega_p  }{N_0} + \frac{b \sigma^2 }{mL_2} +  \Delta, \notag
\end{align}
where $a= 192 n \rho_n$, $b=2$, $\Delta = \frac{n}{12L_2}\Delta_\zeta + \frac{4n}{3L_2}\Delta_\eta^2 + \frac{8\sqrt{2nR_p^2\Omega_p}}{N_0}\left(\frac{\sqrt{\Delta_\zeta}}{2} + 2\Delta_\eta\right) + \frac{N_0}{3L_2\rho_n}\left(\frac{\sqrt{\Delta_\zeta}}{2} + 2\Delta_\eta\right)^2$ and the expectation is taken with respect to all the randomness.
\end{lemma}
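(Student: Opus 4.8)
The plan is to repeat, almost verbatim, the argument used for Lemma~\ref{Lm:ACDS_sc_auxil}, only replacing the accelerated convergence bound of Theorem~\ref{Th:ARDFDSConv} by its non-accelerated counterpart, Theorem~\ref{theorem_convergence_mini_gr_free_non_acc}. First I would verify that the shifted and rescaled prox-function $\bar{d}(x) := R_p^2 d\left(\frac{x-x_0}{R_p}\right)$ is still $1$-strongly convex with respect to $\|\cdot\|_p$: writing $u_x := (x-x_0)/R_p$, one has $\bar{d}(y) - \bar{d}(x) - \la \nabla \bar{d}(x), y-x\ra = R_p^2\left(d(u_y) - d(u_x) - \la \nabla d(u_x), u_y - u_x\ra\right) \geqslant \frac{R_p^2}{2}\|u_y - u_x\|_p^2 = \frac{1}{2}\|y-x\|_p^2$, using the $1$-strong convexity of $d$ and $\|u_y-u_x\|_p = \|y-x\|_p/R_p$.

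Next, since $0 = \arg\min_x d(x)$ we have $\nabla d(0) = 0$, so the Bregman divergence of $\bar d$ centered at the starting point $x_0$ collapses to $\bar V[x_0](x) = \bar d(x) = R_p^2 d\left(\frac{x-x_0}{R_p}\right)$, and hence $\Theta_p = \bar V[x_0](x_*) = R_p^2 d\left(\frac{x_*-x_0}{R_p}\right)$. Taking expectation over the random starting point $x_0$ and invoking \eqref{eq:expdUpBound} together with the hypothesis $\EE_{x_0}\|x_*-x_0\|_p^2 \leqslant R_p^2$ gives $\EE_{x_0}\Theta_p \leqslant \frac{1}{2} R_p^2\Omega_p$; combined with Jensen's inequality this also yields $\EE_{x_0}\sqrt{\Theta_p} \leqslant \sqrt{\frac{1}{2} R_p^2\Omega_p} \leqslant \sqrt{R_p^2\Omega_p}$.

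I would then apply Theorem~\ref{theorem_convergence_mini_gr_free_non_acc} to the run of RDD started at $x_0$ with prox-function $\bar d$ and $N = N_0$ iterations, and take the additional expectation with respect to $x_0$ (the tower property is legitimate because the algorithm's internal randomness is drawn independently of $x_0$). Substituting $\EE_{x_0}\Theta_p \leqslant \frac{1}{2} R_p^2\Omega_p$ and $\EE_{x_0}\sqrt{\Theta_p} \leqslant \sqrt{R_p^2\Omega_p}$ into \eqref{theo_main_result_mini_gr_free} turns the leading term $\frac{384 n\rho_n L_2\Theta_p}{N_0}$ into $\frac{192 n\rho_n L_2 R_p^2\Omega_p}{N_0}$, i.e.\ $\frac{aL_2R_p^2\Omega_p}{N_0}$ with $a = 192 n\rho_n$; it turns $\frac{2}{L_2}\frac{\sigma^2}{m}$ into $\frac{b\sigma^2}{mL_2}$ with $b = 2$; and it turns the remaining three terms (namely $\frac{n}{12L_2}\Delta_\zeta$, $\frac{4n}{3L_2}\Delta_\eta^2$, the $\frac{8\sqrt{2n\Theta_p}}{N_0}\left(\frac{\sqrt{\Delta_\zeta}}{2}+2\Delta_\eta\right)$ term bounded via $\EE_{x_0}\sqrt{\Theta_p}\leqslant\sqrt{R_p^2\Omega_p}$, and the $\frac{N_0}{3L_2\rho_n}\left(\frac{\sqrt{\Delta_\zeta}}{2}+2\Delta_\eta\right)^2$ term) into precisely the quantity $\Delta$ written in the statement. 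Collecting the pieces yields the claimed inequality.

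The argument carries no real difficulty; the only points that need a little care are the affine scaling rule for strong convexity (and hence for the Bregman divergence), the correct averaging out of the extra randomness in $x_0$ via the tower property and Jensen's inequality, and the bookkeeping of numerical constants so that they match $a = 192 n\rho_n$, $b = 2$ and the displayed expression for $\Delta$.
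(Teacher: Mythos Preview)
Your proposal is correct and follows essentially the same route as the paper's own proof: verify that the rescaled prox-function is $1$-strongly convex, observe that $\bar V[x_0](x_*)=\bar d(x_*)$ because $\nabla d(0)=0$, bound this by $R_p^2\Omega_p/2$ via \eqref{eq:expdUpBound}, and then apply Theorem~\ref{theorem_convergence_mini_gr_free_non_acc} followed by an outer expectation over $x_0$. If anything, you are slightly more careful than the paper in distinguishing the expectation bound $\EE_{x_0}\Theta_p\leqslant R_p^2\Omega_p/2$ from a pointwise one and in invoking Jensen for the $\sqrt{\Theta_p}$ term.
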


\begin{proof}
Note that $R_p^2 d \left( \frac{x-x_0}{R_p} \right)$ is strongly convex with constant 1 w.r.t $\|\cdot\|_p$.
Since $0 = \arg \min d(x)$, we have, for the prox-function $\bar{d}(x) = R_p^2 d \left( \frac{x-x_0}{R_p} \right)$ and corresponding Bregman divergence $\bar{V}[x_0](x)$,
$$
\Theta_p = \bar{V}[x_0](x_*) = \bar{d}(x_{*}) - \bar{d}(x_0) - \la \nabla \bar{d}(x_0), x_{*} - x_0 \ra = \bar{d}(x_{*}) \leq \frac{R_p^2\Omega_p}{2}.
$$
Applying Theorem \ref{theorem_convergence_mini_gr_free_non_acc} an taking additional expectation w.r.t to $x_0$, we finish the proof of the lemma.

\end{proof}

\begin{proof}[Proof of Theorem \ref{Th:CDS_sc_rate}]
We prove by induction that
\begin{equation}
\EE \|u_{k}- x^*\|_p^2 \leq R_k^2 = R_p^2 2^{-k} + \frac{4 \Delta}{\mu_p}\left(1-2^{-k}\right).
\end{equation}
For $k=0$, this inequality obviously holds. Let us assume that it holds for some $k \geq 0$ and prove the induction step.
Applying Lemma \ref{Lm:CDS_sc_auxil} at the step $k$ of Algorithm \ref{CDS_sc}, we obtain that
$$
\EE f(u_{k+1}) - f^* = \EE f(y_{N_0}) - f^* \leqslant \frac{aL_2R_k^2\Omega_p  }{N_0} + \frac{b \sigma^2 }{m_kL_2} +  \Delta.
$$
By definition of $N_0$, we have
$$
\frac{aL_2R_k^2\Omega_p  }{N_0} \leqslant \frac{aL_2R_k^2\Omega_p}{\frac{8 a L_2\Omega_p }{\mu_p}} = \frac{\mu_p R_k^2}{8}.
$$
By definition of $m_k$, we have
\begin{align}
& m_{k} \geqslant \frac{8b \sigma^2 }{L_2 \mu_p R_p^2 2^{-k}} \geqslant \frac{8b \sigma^2 }{L_2 \mu_p \left( R_p^2 2^{-k} + \frac{4 \Delta}{\mu_p}\left(1-2^{-k}\right) \right) }   = \frac{8b \sigma^2 }{L_2 \mu_p R_k^2 } \notag
\end{align}
and
$$
\frac{b \sigma^2 }{m_kL_2} \leqslant \frac{b \sigma^2 }{L_2\frac{8b \sigma^2 }{L_2 \mu_p R_k^2 }} = \frac{\mu_p R_k^2}{8}.
$$
Hence,
\begin{equation*}
    \begin{array}{rl}
        \EE f(u_{k+1}) - f^* \leqslant \frac{\mu_p R_k^2}{4} + \Delta = \frac{\mu_p }{4} \left( R_p^2 2^{-k} + \frac{4 \Delta}{\mu_p}\left(1-2^{-k}\right) \right) + \Delta\\
        = \frac{\mu_p }{2} \left( R_p^2 2^{-(k+1)} + \frac{4 \Delta}{\mu_p}\left(1-2^{-(k+1)}\right) \right) = \frac{\mu_p R_{k+1}^2 }{2}.
    \end{array}
\end{equation*}
Since $f$ is strongly convex, we have
$$
\EE \|u_{k+1}- x^*\|_p^2 \leqslant \frac{2}{\mu_p} (\EE f(u_{k+1}) - f^* ) \leqslant R_{k+1}^2.
$$
This finishes the induction step and, as a byproduct, we obtain inequality \eqref{eq:RDFDSSConv}.

It remains to estimate the complexity. To make the right hand side of \eqref{eq:RDFDSSConv} smaller than $\e$ it is sufficient to choose  $K=\left\lceil\log_2 \frac{\mu_p R_p^2}{\e}\right\rceil$.
To estimate the total number of oracle calls, we write
\begin{align}
\text{Number of calls} = \sum_{k=0}^{K-1}{N_0 m_k} &\leqslant \sum_{k=0}^{K-1}{N_0 \left(1 + \frac{8b \sigma^2  2^{k}}{L_2\mu_p R_p^2 } \right)}   \leqslant K N_0+\frac{8b \sigma^2 N_0 2^{K}}{L_2\mu_p R_p^2  }  \notag \\
& \leqslant \frac{8 a L_2\Omega_p }{\mu_p} \log_2 \frac{\mu_p R_p^2 }{\e} + \frac{8b \sigma^2 }{L_2\mu_p R_p^2 } \cdot \frac{8 a L_2\Omega_p }{\mu_p} \cdot  \frac{\mu_p R_p^2 }{ \e} \notag \\
& \leqslant\frac{8 a L_2\Omega_p }{\mu_p}\log_2 \frac{\mu_p R_p^2 }{ \e} + \frac{64 ab \sigma^2 \Omega_p  }{\mu_p \e }\notag\\
&= \widetilde{O}\left(\max\left\{\frac{n^{\frac{2}{q}}L_2\Omega_p }{\mu_p}\log_2 \frac{\mu_p R_p^2 }{ \e},\frac{n^{\frac{2}{q}}\sigma^2 \Omega_p}{\mu_p \e}\right\}\right), \notag
\end{align}
where we used that $a= 192 n \rho_n$, $b=2$ and $\rho_n$ is given in Lemma \ref{Lm:MainTechLM}.

\end{proof}

{
\section{Numerical experiments}\label{sec:numerical_experiments}
In this section we numerically test our methods on the ``worst in the world'' function from \cite{nesterov2004introduction} and least squares problem. In these problems there is no noise of type $\eta(x,\xi,e)$ from \eqref{eq:tf_def} since one can compute directional derivatives with machine precision. Moreover, for both examples one can compute exact functional values, therefore, using small enough smoothing parameter $t$ (see \eqref{eq:FinDiffStocGrad}) it is possible to approximate directional derivatives via finite differences with high enough accuracy. That is, for the problems we consider in this section the difference between directional derivative oracle and derivative-free oracle is negligible to influence the behaviour of our methods. Taking it into account we consider only derivative-free oracle in the experiments and compare our methods with RSGF from \cite{ghadimi2013stochastic}.

\subsection{Nesterov's function}\label{sec:nesterov_func}
We start with numerical tests on Nesterov's function
\begin{equation}
    f(x) = \frac{L}{8}\left(x_1^2+\sum\limits_{i=0}^{n-1}\left(x_i - x_{i+1}\right)^2 + x_i^2\right) - \frac{L}{4}x_1\label{eq:nesterov_func}
\end{equation}
which is convex, $L$-smooth and attains its minimal value $f^* = \frac{L}{8}\left(-1+\frac{1}{n+1}\right)$ at such $x^* = (x_1^*,\ldots, x_n^*)^\top$ that $x_i^* = 1 - \frac{i}{n+1}$ for $i=1,\ldots,n$ \cite{nesterov2004introduction}. We take the starting point $x_0$ such that all coordinates expect the first one coincides with corresponding coordinates of $x^*$ and we take $10$ as the first coordinate of $x_0$. We also choose $L = 10$, $t = 10^{-8}$ and consider $n=100,1000,5000$. The results can be found in Figure~\ref{fig:nesterov_comparison}.
\begin{figure}[h]
    \centering
    \includegraphics[width=0.45\textwidth]{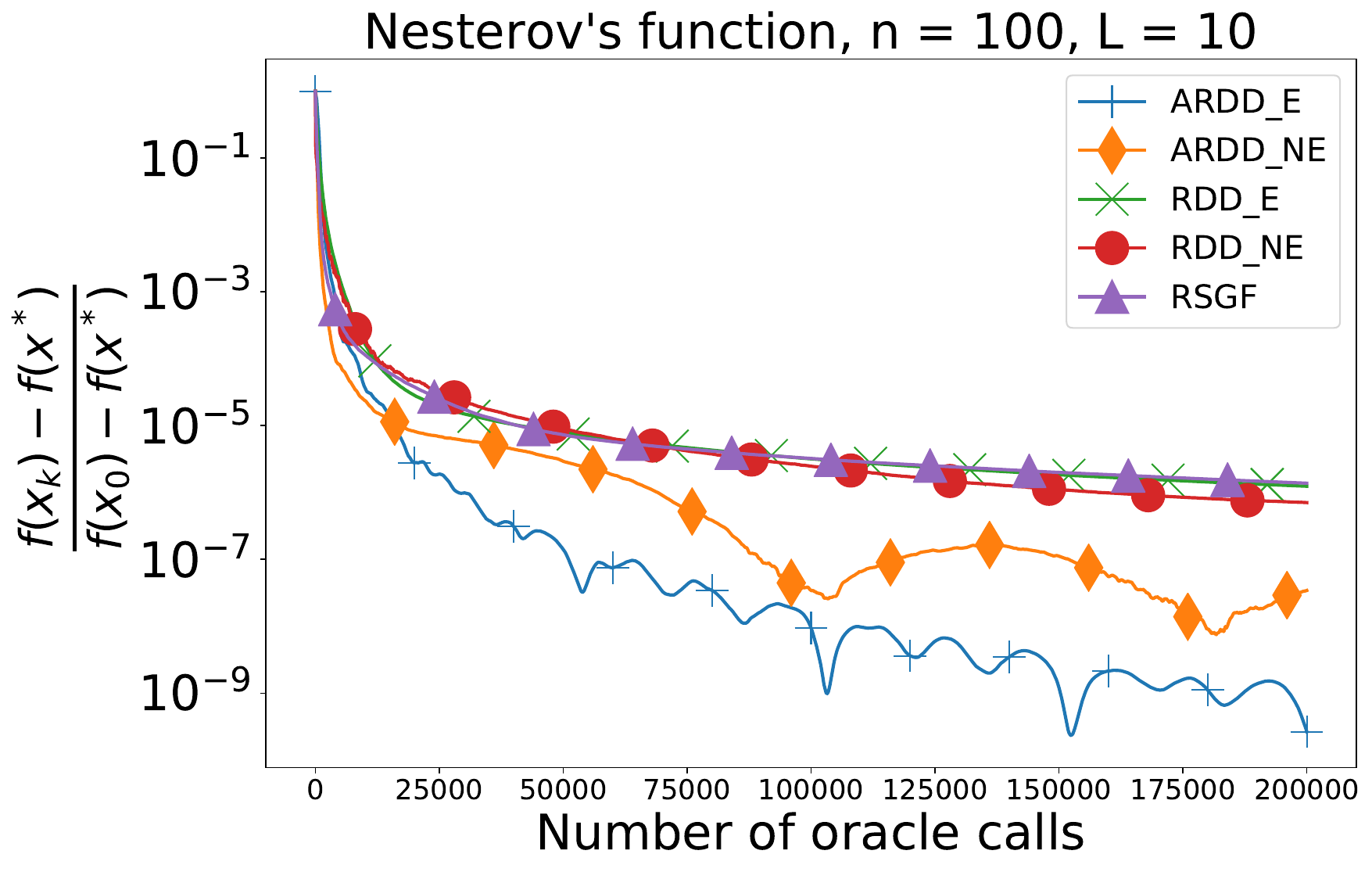}
    \includegraphics[width=0.45\textwidth]{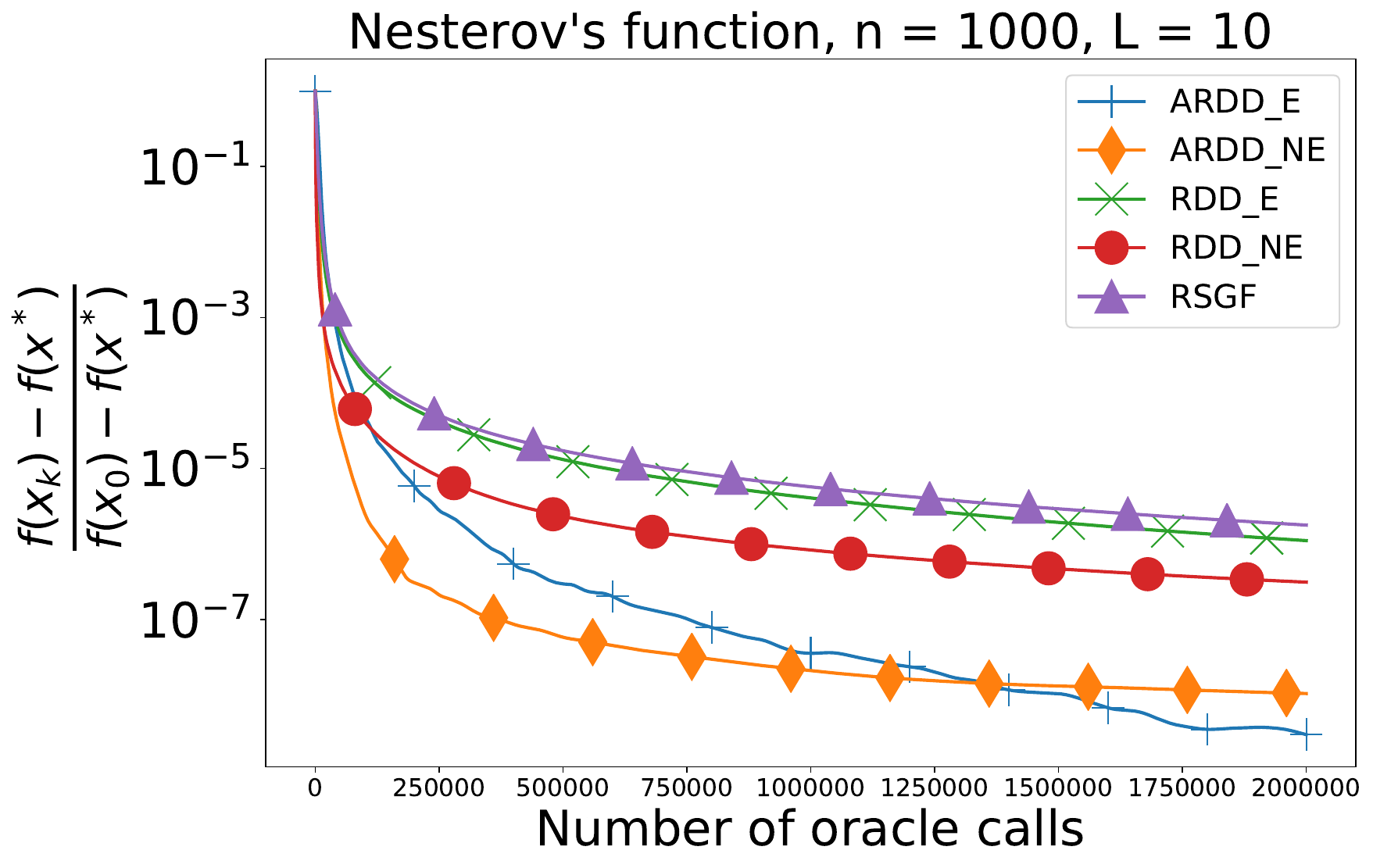}
    \includegraphics[width=0.45\textwidth]{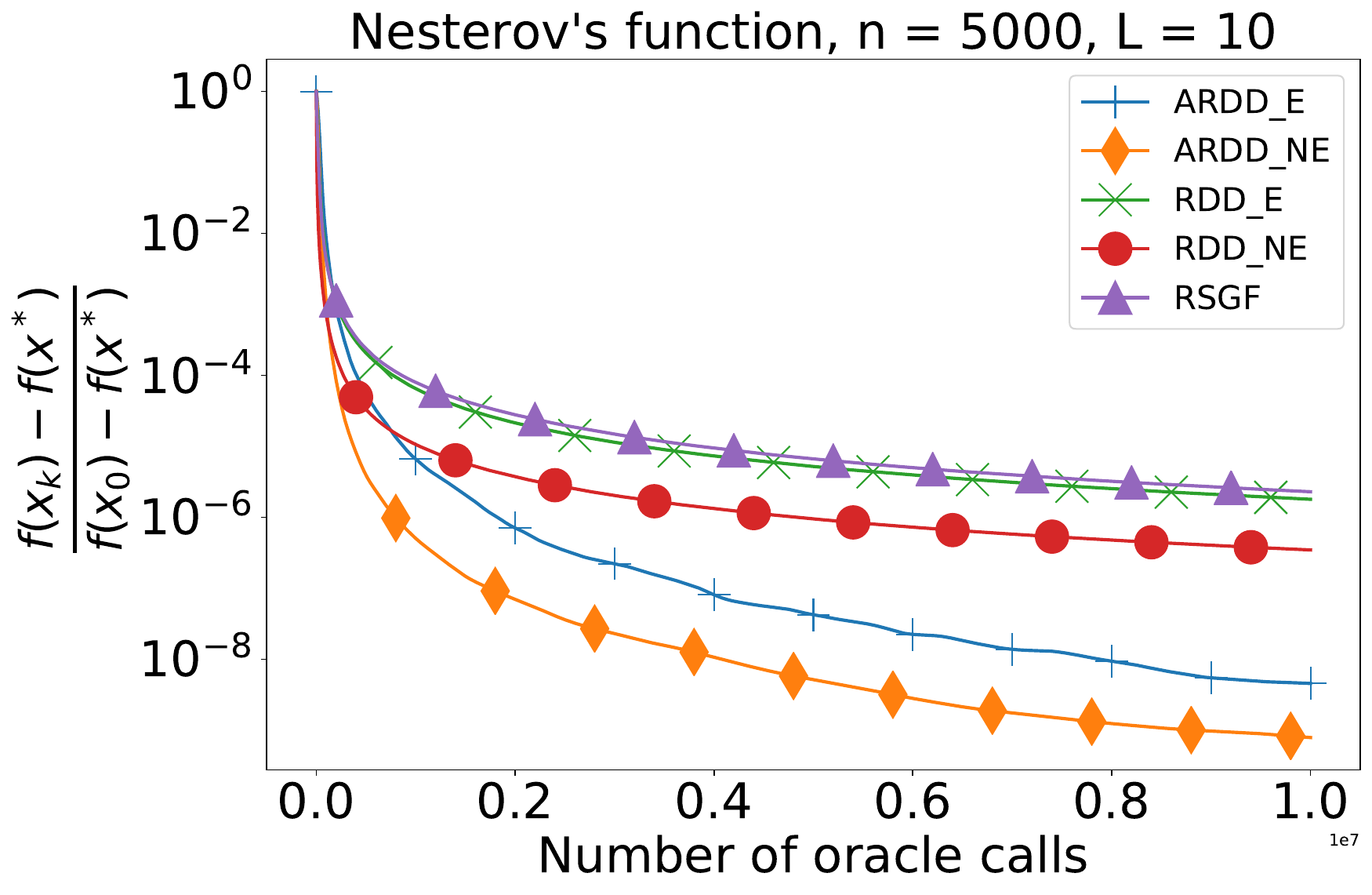}
    \caption{ARDD, RDD and RSGF applied to minimize Nesterov's function \eqref{eq:nesterov_func}. We use {\_}E and {\_}NE to define $\ell_2$ and $\ell_1$ proximal setups respectively (see \eqref{eq:dp1} and \eqref{eq:dp2} for the details). In the plot for $n=5000$ number of oracle calls is divided by $10^7$.}
    \label{fig:nesterov_comparison}
\end{figure}
In these settings $\|x_0 - x^*\|_1 = \|x_0 - x^*\|_2$ and our theory establishes (see Tables~\ref{tab:ARDD}~and~\ref{tab:RDD}) better complexity bounds for the case when $p=1$ then for the Euclidean case especially for big $n$. The experiments confirm this claim: as one can see in Figure~\ref{fig:nesterov_comparison}, the choice of $\ell_1$ proximal setup becomes more beneficial than standard Euclidean setup for $n = 1000$ and $n=5000$ to reach good enough accuracy. Indeed, our choice of the starting point and $L$ implies that $f(x_0) - f(x^*) \approx 200$ and for $n=1000$ and $n=5000$ ARDD with $\ell_1$ proximal setup (ARDD{\_}NE in Figure~\ref{fig:nesterov_comparison}) make $f(x_N) - f(x^*)$ of order $10^{-3}-10^{-5}$ faster than ARDD with $p=2$ (ARDD{\_}E in Figure~\ref{fig:nesterov_comparison}) and RDD with $p=1$ (RDD{\_}NE in Figure~\ref{fig:nesterov_comparison}) finds such $x^N$ that $f(x^N) - f(x^*)$ is of order $10^{-3}$ faster than its Euclidean counterpart (RDD{\_}E in Figure~\ref{fig:nesterov_comparison}). Finally, all of our methods outperform RSGF on the considered problem.

To perform mirror descent step for $p=1$ we apply relations obtained in Appendix B from \cite{dvurechensky2018accelerated}. See other details connected with parameters tuning in \ref{sec:param_tunning} of this work.

\subsection{Least squares problem}\label{sec:least_squares}
In this subsection we consider least squares problem:
\begin{equation}
    \min\limits_{x\in\R^n}\left\{f(x) = \frac{1}{2r}\|Ax - b\|_2^2 = \frac{1}{r}\sum\limits_{i=1}^r\frac{1}{2}(A_ix - b_i)^2\right\}. \label{eq:least_squares_pr}
\end{equation}
Here $A$ is $r\times n$ real matrix, $b\in\R^r$ and $A_i$ denotes the $i$-th row of $A$. Clearly, $f(x)$ is convex and smooth function. Moreover, each summand $f_i(x) = \frac{1}{2}(A_ix - b_i)^2$ is also convex and $L_{2.i}$-smooth function with $L_{2,i} = \|A_i\|_2^2$. One can consider \eqref{eq:least_squares_pr} as \eqref{eq:PrSt} with $F(x,\xi) = f_{\xi}(x) = \frac{1}{2}(A_{\xi}x - b_{\xi})^2$ where $\xi$ is uniformly distributed on $\{1,2,\ldots,r\}$. Then, by definition of $L_2$ we have
\begin{equation}
    L_2 = \sqrt{\EE_\xi L_{2,\xi}^2} = \sqrt{\frac{1}{r}\sum\limits_{i=1}^r \|A_i\|_2^2} = \frac{\|A\|_F}{\sqrt{r}}\label{eq:L_2_least_squares}
\end{equation}
where $\|A\|_F$ denotes Frobenius norm of matrix $A$.

In our preliminary experiments elements of $A$ and $b$ were sampled independently from the standard normal distribution and then matrix $A$ was normalized by its $\ell_2$-norm. In particular, we choose $r=300$ and $n=400$ which implies that $f(x)$ is just convex but not strongly convex and $f(x^*) = 0$. Moreover, we compute the solution $x^*$ as $A^{+}b$ where $A^{+}$ denotes Moore-Penrose inverse of $A$ and choose the starting point $x_0$ as $x^*$ and $100$ to the first component. In our tests the suboptimality of the starting point, i.e.\ $f(x_0) - f(x^*)$, was approximately $3$. The results can be found in Figure~\ref{fig:ls_comparison}.
\begin{figure}[h]
    \centering
    \includegraphics[width=0.7\textwidth]{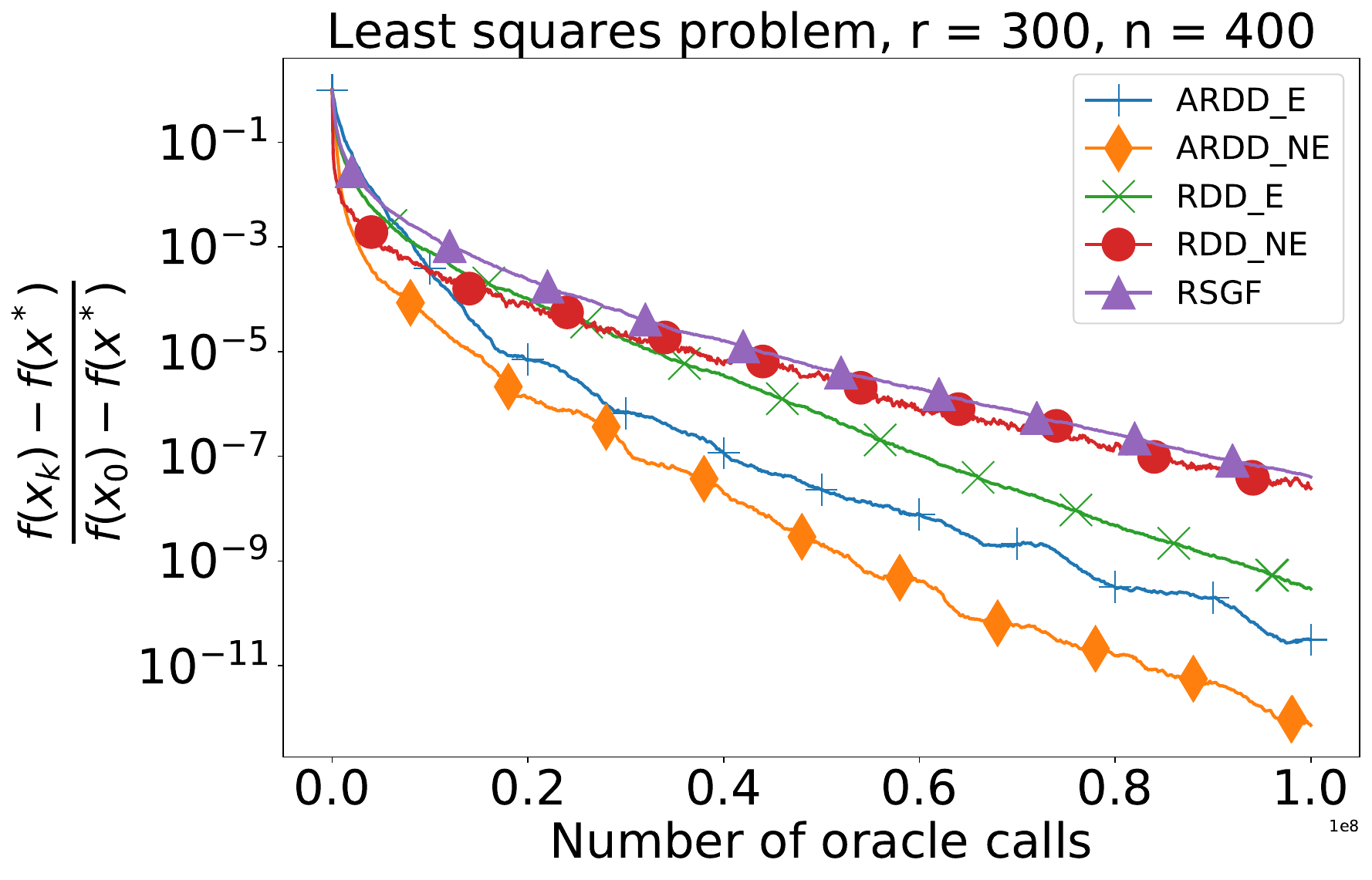}
    \caption{ARDD, RDD and RSGF applied to solve least squares problem \eqref{eq:least_squares_pr}. We use {\_}E and {\_}NE to define $\ell_2$ and $\ell_1$ proximal setups respectively (see \eqref{eq:dp1} and \eqref{eq:dp2} for the details). For all methods batch size $m$ equals $50$. By oracle call we mean one computation of functional value of a summand. Number of oracle calls is divided by $10^8$.}
    \label{fig:ls_comparison}
\end{figure}
We want to notice that in these preliminary experiments with stochasticity in functional values in experiments with ARDD it was needed to tune not only $\alpha_{k+1}$ that appears in the mirror descent step, but also the stepsize for the gradient step, see the details in \ref{sec:param_tunning}.

\section{Conclusion}
In this paper we propose four novel directional derivative methods for smooth stochastic convex and strongly convex optimization with corollaries for derivative-free optimization. These methods are able to work with Euclidean and non-Euclidean proximal setups. We prove complexity results showing that in non-Euclidean case complexities of our methods outperform state-of-the-art results for directional derivative and derivative-free methods in terms of the dependence on the dimension of the problem under assumption that $\ell_1$ and $\ell_2$ norms of $x_0 - x^*$ are close to each other, e.g.\ when $x_0 = 0$ and $x^*$ is sparse. Moreover, we analyze our methods under general assumptions on the noisy oracle and provide bounds for the admissible noise levels. Since we use mini-batches, we are able to separate iteration complexity and sample complexity, the former being up to a dimension-dependent factor the same as for accelerated gradient method in the standard deterministic full-gradient setting. This makes our methods amenable to parallel computation setting \cite{dvurechensky2018parallel} and leads to acceleration in this setting compared to standard stochastic gradient methods \cite{duchi2015optimal}. Finally, we conduct several experiments providing numerical justifications of the obtained results.

\rev2{Using an additional ``light-tail'' assumption that $ \EE_{\xi}[\exp(\|g(x,\xi) - \nabla f(x)\|_2^2/\sigma^2)] \leqslant \exp(1)$ and techniques of \cite{gorbunov2019optimal} our algorithms and analysis can be extended to obtain results in terms of probability of large deviations. For example, in the case of controlled noise levels $\Delta_\zeta, \Delta_\eta$ this means that an algorithm outputs a point $\hat{x}$ which satisfies $\mathbb{P} \{f(\hat{x}) - f(x^*) \leqslant \e\} \geqslant 1-\delta$, where $\delta \in (0,1)$ is the confidence level, for the price of extra $\ln \frac{1}{\delta}$ factor in $N$ and $m$.}
As directions of future research we would like to point a primal-dual extension for problems with linear constraints in the spirit of \cite{dvurechensky2016primal-dual,chernov2016fast,anikin2017dual,bayandina2018mirror,dvurechensky2018decentralize,dvinskikh2019primal,nesterov2020primal-dual}, an extension with line-search to adapt to an unknown value of $L_2$ using the techniques in \cite{cartis2018global,berahas2019global,dvinskikh2020line-search}, an extension for the case of intermediate smoothness \cite{nesterov2015universal,kamzolov2020universal} or interpolation between accelerated and non-accelerated methods \cite{gasnikov2016stochasticInter,dvurechensky2016stochastic}, as well as extension to a more general type of inexactness called inexact model of the objective \cite{stonyakin2019gradient,stonyakin2020inexact}.
}

\section*{Acknowledgements}
The research is supported by the Ministry of Science and Higher Education of the Russian Federation (Goszadaniye) No. 075-00337-20-03, project No. 0714-2020-0005.

 \appendix

\section{Proof of Lemma~\ref{Lm:MainTechLM}}\label{sec:MainTechLM_proof}
Here we prove that, for $e \in RS_2\left( 1 \right)$ 
\begin{equation}\label{lemm1:expect_q_norm}
        \EE[\|e\|_q^2] \leqslant \min\{q-1,\,16\ln n - 8\}n^{\frac{2}{q}-1},
    \end{equation}
    \begin{equation}\label{lemm1:expect_inner_product}
        \EE[\langle s,\, e\rangle^2\|e\|_q^2] \leqslant 6\|s\|_2^2\min\{q-1,16\ln n -8\}n^{\frac{2}{q}-2}.
\end{equation}

We start with proving the following inequality which could be rough for big $q$:
\begin{equation}\label{lemm1:rough_estimation_of_expectation_q_norm}
    \EE[\|e\|_q^2] \leqslant (q-1)n^{\frac{2}{q}-1},\quad 2\leqslant q < \infty.
\end{equation}
We have
\begin{equation}\label{lemm1:jensen}
    \begin{array}{rl}
        \EE[\|e\|_q^2] = \EE\left[\left(\sum\limits_{k=1}^{n}|e_k|^q\right)^{\frac{2}{q}} \right] \overset{\circledOne}{\leqslant} \left(\EE\left[\sum\limits_{k=1}^{n}|e_k|^q\right]\right)^{\frac{2}{q}} \overset{\circledTwo}{=} \left(n\EE[|e_2|^q]\right)^{\frac{2}{q}},
    \end{array}
\end{equation}
where $\circledOne$ is due to probabilistic version of Jensen's inequality (function $\varphi(x) = x^{\frac{2}{q}}$ is concave, because $q\geqslant2$) and $\circledTwo$ is because mathematical expectation is linear and components of vector $e$ are identically distributed.

Moreover, due to Poincare lemma, we have
\begin{equation}
    e \overset{d}{=} \frac{\xi}{\sqrt{\xi_1^2 + \dots + \xi_n^2}},
\end{equation}
where $\xi$ is Gaussian random vector which mathematical expectation is zero vector and covariance matrix is identical. Then
\begin{equation*}
    \begin{array}{rl}
        \EE[|e_2|^q] = \EE\left[\frac{|\xi_2|^q}{\left(\xi_1^2+\ldots+\xi_n^2\right)^{\frac{q}{2}}}\right]\\
        = \idotsint\limits_{\R^n}|x_2|^q\left(\sum\limits_{k=1}^{n}x_k^2\right)^{-\frac{q}{2}}\cdot\frac{1}{(2\pi)^{\frac{n}{2}}}\cdot \exp\left(-\frac{1}{2}\sum\limits_{k=1}^{n}x_k^2\right)dx_1\ldots dx_n.
    \end{array}
\end{equation*}
Consider spherical coordinates:
\begin{equation*}
    \begin{array}{rl}
        x_1 = r\cos\varphi\sin\theta_1\ldots\sin\theta_{n-2},\\
        x_2 = r\sin\varphi\sin\theta_1\ldots\sin\theta_{n-2},\\
        x_3 = r\cos\theta_1\sin\theta_2\ldots\sin\theta_{n-2},\\
        x_4 = r\cos\theta_2\sin\theta_3\ldots\sin\theta_{n-2},\\
        \ldots\\
        x_n = r\cos\theta_{n-2},\\
        r>0,\,\varphi \in [0,2\pi),\, \theta_i \in [0,\pi],\, i = \overline{1,n-2}. 
    \end{array}
\end{equation*}
The Jacobian of mapping is
\begin{equation*}
    \det\left(\frac{\partial(x_1,\ldots,x_n)}{\partial(r,\varphi,\theta_1,\theta_2,\ldots,\theta_{n-2})}\right) = r^{n-1}\sin\theta_1(\sin\theta_2)^2\ldots(\sin\theta_{n-2})^{n-2}.
\end{equation*}
Then mathematical expectation $\EE[|e_2|^q]$ could be rewritten in the following form:
\begin{equation*}
    \begin{array}{rl}
        \EE[|e_2|^q]\\
        = \idotsint\limits_{\substack{r>0,\,\varphi \in [0,2\pi),\\ \theta_i \in [0,\pi],\, i = \overline{1,n-2}}}r^{n-1}|\sin\varphi|^q|\sin\theta_1|^{q+1}|\sin\theta_2|^{q+2}\ldots|\sin\theta_{n-2}|^{q+n-2}\\
        \cdot\frac{e^{-\frac{r^2}{2}}}{(2\pi)^{\frac{n}{2}}}dr\ldots d\theta_{n-2}\\
        =\frac{1}{(2\pi)^{\frac{n}{2}}} I_r\cdot I_\varphi \cdot I_{\theta_1}\cdot I_{\theta_2}\cdot\ldots\cdot I_{\theta_{n-2}},
    \end{array}
\end{equation*}
where
\begin{equation*}
    \begin{array}{rl}
        I_r = \int\limits_{0}^{+\infty}r^{n-1}e^{-\frac{r^2}{2}}dr,\\
        I_\varphi = \int\limits_{0}^{2\pi}|\sin\varphi|^qd\varphi = 2\int\limits_{0}^{\pi}|\sin\varphi|^qd\varphi,\\
        I_{\theta_i} = \int\limits_{0}^{\pi}|\sin\theta_i|^{q+i}d\theta_i,\, i=\overline{1,n-2}.
    \end{array}
\end{equation*}
Now we are going to compute these integrals. Start with $I_r$:
\begin{equation*}
    \begin{array}{rl}
        I_r = \int\limits_{0}^{+\infty}r^{n-1}e^{-\frac{r^2}{2}}dr = /r = \sqrt{2t}/ = \int\limits_{0}^{+\infty}(2t)^{\frac{n}{2}-1}e^{-t}dt = 2^{\frac{n}{2}-1}\Gamma(\frac{n}{2}).
    \end{array}
\end{equation*}
To compute other integrals it is useful to consider the following integral $(\alpha > 0)$:
\begin{equation*}
    \begin{array}{rl}
        \int\limits_{0}^{\pi} |\sin\varphi|^\alpha d\varphi = 2\int\limits_{0}^{\frac{\pi}{2}}|\sin\varphi|^\alpha d\varphi = 2\int\limits_{0}^{\frac{\pi}{2}}(\sin^2\varphi)^{\frac{\alpha}{2}}d\varphi = /t = \sin^2\varphi /\\ 
        = \int\limits_{0}^{1}t^{\frac{\alpha-1}{2}}(1-t)^{-\frac{1}{2}}dt = B(\frac{\alpha+1}{2},\,\frac{1}{2}) = \frac{\Gamma(\frac{\alpha+1}{2})\Gamma(\frac{1}{2})}{\Gamma(\frac{\alpha+2}{2})} = \sqrt{\pi} \frac{\Gamma(\frac{\alpha+1}{2})}{\Gamma(\frac{\alpha+2}{2})}.
    \end{array}
\end{equation*}
From this we obtain
\begin{equation}\label{lemm1:expectation_component}
    \begin{array}{rl}
        \EE[|e_2|^q] = \frac{1}{(2\pi)^{\frac{n}{2}}} I_r\cdot I_\varphi \cdot I_{\theta_1}\cdot I_{\theta_2}\cdot\ldots\cdot I_{\theta_{n-2}}\\
        = \frac{1}{(2\pi)^{\frac{n}{2}}}\cdot 2^{\frac{n}{2}-1}\Gamma(\frac{n}{2})\cdot 2\sqrt{\pi}\frac{\Gamma(\frac{q+1}{2})}{\Gamma(\frac{q+2}{2})}\cdot\sqrt{\pi}\frac{\Gamma(\frac{q+2}{2})}{\Gamma(\frac{q+3}{2})}\cdot\sqrt{\pi}\frac{\Gamma(\frac{q+3}{2})}{\Gamma(\frac{q+4}{2})}\cdot\ldots\cdot\sqrt{\pi}\frac{\Gamma(\frac{q+n-1}{2})}{\Gamma(\frac{q+n}{2})}\\
        = \frac{1}{\sqrt{\pi}}\cdot\frac{\Gamma(\frac{n}{2})\Gamma(\frac{q+1}{2})}{\Gamma(\frac{q+n}{2})}.
    \end{array}
\end{equation}
Now, we want to show that $\forall\, q\geqslant 2$
\begin{equation}\label{lemm1:key_estimation}
    \frac{1}{\sqrt{\pi}}\cdot\frac{\Gamma(\frac{n}{2})\Gamma(\frac{q+1}{2})}{\Gamma(\frac{q+n}{2})} \leqslant \left(\frac{q-1}{n}\right)^{\frac{q}{2}}.
\end{equation}
At the beginning show that \eqref{lemm1:key_estimation} holds for $q=2$ (and arbitrary $n$):
\begin{equation*}
    \frac{1}{\sqrt{\pi}}\cdot\frac{\Gamma(\frac{n}{2})\Gamma(\frac{2+1}{2})}{\Gamma(\frac{2+n}{2})} - \frac{1}{n} = \frac{1}{\sqrt{\pi}} \cdot \frac{\Gamma(\frac{n}{2})\cdot\frac{1}{2}\Gamma(\frac{1}{2})}{\frac{n}{2}\Gamma(\frac{n}{2})} - \frac{1}{n} = \frac{1}{n}-\frac{1}{n} = 0 \leqslant 0.
\end{equation*}
Consider the function
\begin{equation*}
    f_n(q) = \frac{1}{\sqrt{\pi}}\cdot\frac{\Gamma(\frac{n}{2})\Gamma(\frac{q+1}{2})}{\Gamma(\frac{q+n}{2})} - \left(\frac{q-1}{n}\right)^{\frac{q}{2}}
\end{equation*}
where $q \geqslant 2$. Also consider $\psi(x) = \frac{d(\ln(\Gamma(x)))}{dx}$ with $x > 0$ which is called (\textit{digamma function}). For gamma function it holds
\begin{equation*}
    \Gamma(x+1) = x\Gamma(x),\, x>0.
\end{equation*}
Taking natural logarithm from it and taking derivative w.r.t. $x$:
\begin{equation*}
    \begin{array}{rl}
        \ln\Gamma(x+1) = \ln\Gamma(x) + \ln x,\\
        \frac{d(\ln(\Gamma(x+1)))}{dx} = \frac{d(\ln(\Gamma(x)))}{dx} + \frac{1}{x},
    \end{array}
\end{equation*}
which could be written in digamma-function-notation:
\begin{equation}\label{lemm1:digamma_recurrence}
    \psi(x+1) = \psi(x) + \frac{1}{x}.
\end{equation}
One can show that digamma function is monotonically increases when $x>0$. To prove this fact we are going to show that
\begin{equation}\label{lemm1:digamma_decrease}
    \left(\Gamma'(x)\right)^2 < \Gamma(x)\Gamma''(x).
\end{equation}
That is,
\begin{equation*}
    \begin{array}{rl}
        \left(\Gamma'(x)\right)^2 = \left(\int\limits_0^{+\infty}e^{-t}\ln t\cdot t^{x-1}dt\right)^2\\
        \overset{\circledOne}{<} \int\limits_0^{+\infty}\left(e^{-\frac{t}{2}}t^{\frac{x-1}{2}}\right)^2dt\cdot\int\limits_0^{+\infty}\left(e^{-\frac{t}{2}}t^{\frac{x-1}{2}}\ln t\right)^2dt = \underbrace{\int\limits_0^{+\infty}e^{-t}t^{x-1}dt}_{\Gamma(x)}\cdot\underbrace{\int\limits_{0}^{+\infty}e^tt^{x-1}\ln^2 tdt}_{\Gamma''(x)},
    \end{array}
\end{equation*}
where $\circledOne$ follows from Cauchy-Schwartz inequality (the equality cannot occur because functions $e^{-\frac{t}{2}}t^{\frac{x-1}{2}}$ and $e^{-\frac{t}{2}}t^{\frac{x-1}{2}}\ln t$ are linearly independent). From \eqref{lemm1:digamma_decrease} follows that
\begin{equation*}
    \frac{d^2(\ln\Gamma(x))}{dx^2} = \left(\frac{\Gamma'(x)}{\Gamma(x)}\right)' = \frac{\Gamma''(x)}{\Gamma(x)} - \frac{\left(\Gamma'(x)\right)^2}{\left(\Gamma(x)\right)^2} \overset{\eqref{lemm1:digamma_decrease}}{>} 0, 
\end{equation*}
which shows that digamma function increases.

Now we show that $f_n(q)$ decreases on the interval $[2,+\infty)$. To obtain it is sufficient to consider $\ln(f(q))$:
\begin{equation*}
    \begin{array}{rl}
        \ln(f_n(q))\\
        = \ln\left(\frac{\Gamma(\frac{n}{2})}{\sqrt{\pi}}\right) + \ln\left(\Gamma\left(\frac{q+1}{2}\right)\right) - \ln\left(\Gamma\left(\frac{q+n}{2}\right)\right) - \frac{q}{2}\left(\ln(q-1)-\ln n\right),\\
        \frac{d(\ln(f_n(q)))}{dq} = \frac{1}{2}\psi\left(\frac{q+1}{2}\right)-\frac{1}{2}\psi\left(\frac{q+n}{2}\right)-\frac{1}{2}\ln(q-1)-\frac{q}{2(q-1)} + \frac{1}{2}\ln n.
    \end{array}
\end{equation*}
We are going to show that $\frac{d(\ln(f_n(q)))}{dq} < 0$ for $q\geqslant 2$. Let $k = \lfloor\frac{n}{2}\rfloor$ (the closest integer which is no greater than $\frac{n}{2}$). Then $\psi\left(\frac{q+n}{2}\right) > \psi\left(k-1+\frac{q+1}{2}\right)$ and $\ln n \leqslant \ln(2k+1)$, whence
\begin{equation*}
    \begin{array}{rl}
        \frac{d(\ln(f_n(q)))}{dq}\\
        < \frac{1}{2}\left(\psi\left(\frac{q+1}{2}\right)-\psi\left(k-1+\frac{q+1}{2}\right)\right)-\frac{1}{2}\ln(q-1)-\frac{q}{2(q-1)} + \frac{1}{2}\ln(2k+1)\\
        \overset{\eqref{lemm1:digamma_recurrence}}{=}\frac{1}{2} \left(\psi\left(\frac{q+1}{2}\right) - \sum\limits_{i=1}^{k-1}\frac{1}{\frac{q+1}{2}+ k - i - 1} - \psi\left(\frac{q+1}{2}\right)\right) - \frac{q}{2(q-1)} + \frac{1}{2}\ln\left(\frac{2k+1}{q-1}\right)\\
        \overset{\circledOne}{\leqslant} -\frac{1}{2}\sum\limits_{i=1}^{k-1}\frac{2}{q-1+2k-2i} - \frac{1}{q-1} + \frac{1}{2}\ln\left(\frac{2k+1}{q-1}\right)\\
        = -\frac{1}{2}\left(\frac{2}{q-1} + \frac{2}{q+1} + \frac{2}{q+3}+ \ldots + \frac{2}{q+2k-3}\right) + \frac{1}{2}\ln\left(\frac{2k+1}{q-1}\right)\\
        \overset{\circledTwo}{<} -\frac{1}{2}\ln\left(\frac{q+2k-1}{q-1}\right)+\frac{1}{2}\ln\left(\frac{2k+1}{q-1}\right) \overset{\circledThree}{\leqslant}-\frac{1}{2}\ln\left(\frac{2k+1}{q-1}\right)+\frac{1}{2}\ln\left(\frac{2k+1}{q-1}\right) = 0,
    \end{array}
\end{equation*}
where $\circledOne$ and $\circledThree$ is because $q\geqslant2$, $\circledTwo$ is due to estimation of integral of $\frac{1}{x}$ by integral of $g(x) = \frac{1}{q-1+2i},\, x\in[q-1+2i,q-1+2i+2],\, i=\overline{0,2k-1}$ which is no less than $f(x)$: 
\begin{equation*}
    \frac{2}{q-1} + \frac{2}{q+1} + \frac{2}{q+3} + \ldots + \frac{2}{q+2k-3} > \int\limits_{q-1}^{q+2k-1}\frac{1}{x}dx = \ln\left(\frac{q+2k-1}{q-1}\right).
\end{equation*}

So, we shown that $\frac{d(\ln(f_n(q)))}{dq} < 0$ for $q\geqslant2$ arbitrary natural number $n$. Therefore for any fixed number $n$ the function $f_n(q)$ decreases as $q$ increase, which means that $f_n(q)\leqslant f_n(2) = 0$, i.e., \eqref{lemm1:key_estimation} holds. From this and \eqref{lemm1:jensen},\eqref{lemm1:expectation_component} we obtain that $\forall\, q\geqslant2$
\begin{equation}\label{lemm1:pre-final}
    \EE[||e||_q^2] \overset{\eqref{lemm1:jensen}}{\leqslant} \left(n\EE[|e_2|^q]\right)^{\frac{2}{q}} \overset{\eqref{lemm1:expectation_component},\eqref{lemm1:key_estimation}}{\leqslant} (q-1)n^{\frac{2}{q}-1}.
\end{equation}
However, inequality \eqref{lemm1:pre-final} is useless when $q$ is big (with respect to $n$). Consider left hand side of \eqref{lemm1:pre-final} as function of $q$ and find its minimum for $q\geqslant2$. Consider $h_n(q) = \ln(q-1) + \left(\frac{2}{q}-1\right)\ln n$ (it is logarithm of the right hand side of \eqref{lemm1:pre-final}). Derivative of $h(q)$ is
\begin{equation*}
    \begin{array}{rl}
        \frac{dh(q)}{dq} = \frac{1}{q-1} -\frac{2\ln n}{q^2},\\
        \frac{1}{q-1} -\frac{2\ln n}{q^2} = 0,\\
        q^2-2q\ln n + 2\ln n = 0.
    \end{array}
\end{equation*}
If $n \geqslant 8$, then the point where the function obtains its minimum on the set $[2,+\infty)$ is $q_0 = \ln n\left(1+\sqrt{1-\frac{2}{\ln n}}\right)$ (for the case $n\leqslant 7$ it turns out that $q_0 = 2$; further without loss of generality we assume $n\geqslant8$). Therefore for all $q>q_0$ it is more useful to use the following estimation:
\begin{equation}\label{lemm1:pre_final_big_q}
    \begin{array}{rl}
        \EE[||e||_q^2] \overset{\circledOne}{<} \EE[||e||_{q_0}^2] \overset{\eqref{lemm1:pre-final}}{\leqslant}(q_0-1)n^{\frac{2}{q_0}-1}\overset{\circledTwo}{\leqslant}(2\ln n -1)n^{\frac{2}{\ln n}-1}\\
        = (2\ln n -1)e^2\frac{1}{n}\leqslant(16\ln n -8)\frac{1}{n}\leqslant(16\ln n -8)n^{\frac{2}{q}-1},
    \end{array}
\end{equation}
where $\circledOne$ is due to $\|e\|_q < \|e\|_{q_0}$ for $q > q_0$, $\circledTwo$ follows from $q_0 \leqslant 2\ln n,\, q_0 \geqslant \ln n$. Putting estimations \eqref{lemm1:pre-final} and \eqref{lemm1:pre_final_big_q} together we obtain \eqref{lemm1:expect_q_norm}. 

Now we are going to prove \eqref{lemm1:expect_inner_product}. Firstly, we want to estimate $\sqrt{\EE[\|e\|_q^4]}$. Due to probabilistic Jensen's inequality ($q\geqslant2$)
\begin{equation*}
    \begin{array}{rl}
        \EE[||e||_q^4] = \EE\left[\left(\left(\sum\limits_{k=1}^{n}|e_k|^q\right)^2\right)^{\frac{2}{q}}\right] \leqslant \left(\EE\left[\left(\sum\limits_{k=1}^{n}|e_k|^q\right)^2\right]\right)^{\frac{2}{q}}\\
        \overset{\circledOne}{\leqslant} \left(\EE\left[\left(n\sum\limits_{k=1}^{n}|e_k|^{2q}\right)\right]\right)^{\frac{2}{q}} \overset{\circledTwo}{=} \left(n^2\EE[|e_2|^{2q}]\right)^{\frac{2}{q}}\\
        \overset{\eqref{lemm1:expectation_component},\eqref{lemm1:key_estimation}}{\leqslant} n^{\frac{4}{q}}\left(\left(\frac{2q-1}{n}\right)^{\frac{2q}{2}}\right)^{\frac{2}{q}} = (2q-1)^{2}n^{\frac{4}{q}-2},
    \end{array}
\end{equation*}
where $\circledOne$ is because $\left(\sum\limits_{k=1}^{n} x_k\right)^2 \leqslant n\sum\limits_{k=1}^{n} x_k^2$ for $x_1,x_2,\ldots,x_n\in\R$ and $\circledTwo$ follows from that mathematical expectation is linear and components of the random vector $e$ are identically distributed. From this we obtain
\begin{equation}\label{lemm1:pre_final_q_norm_power4}
    \sqrt{\EE[||e||_q^4]} \leqslant (2q-1)n^{\frac{2}{q}-1}.
\end{equation}
Consider the right hand side of the inequality \eqref{lemm1:pre_final_q_norm_power4} as a function of $q$ and find its minimum for $q\geqslant2$. Consider $h_n(q) = \ln(2q-1) + \left(\frac{2}{q}-1\right)\ln n$ (logarithm of the right hand side \eqref{lemm1:pre_final_q_norm_power4}). Derivative of $h(q)$ is
\begin{equation*}
    \begin{array}{rl}
        \frac{dh(q)}{dq} = \frac{2}{2q-1} -\frac{2\ln n}{q^2},\\
        \frac{2}{2q-1} -\frac{2\ln n}{q^2} = 0,\\
        q^2-2q\ln n + \ln n = 0.
    \end{array}
\end{equation*}
If $n \geqslant 3$, the the point where the function obtains its minimum on the set $[2,+\infty)$ is $q_0 = \ln n\left(1+\sqrt{1-\frac{1}{\ln n}}\right)$ (for the case $n\leqslant 2$ it turns out that $q_0 = 2$; further without loss of generality we assume that $n\geqslant3$). Therefore for all $q>q_0$:
\begin{equation}\label{lemm1:pre_final_big_q_power4}
    \begin{array}{rl}
        \sqrt{\EE[||e||_q^4]} \overset{\circledOne}{<} \sqrt{\EE[||e||_{q_0}^4]} \overset{\eqref{lemm1:pre_final_q_norm_power4}}{\leqslant}(2q_0-1)n^{\frac{2}{q_0}-1}\overset{\circledTwo}{\leqslant}(4\ln n -1)n^{\frac{2}{\ln n}-1}\\
        = (4\ln n -1)e^2\frac{1}{n}\leqslant(32\ln n -8)\frac{1}{n}\leqslant(32\ln n -8)n^{\frac{2}{q}-1},
    \end{array}
\end{equation}
where $\circledOne$ is due to $\|e\|_q < \|e\|_{q_0}$ for $q > q_0$, $\circledTwo$ follows from $q_0 \leqslant 2\ln n,\, q_0 \geqslant \ln n$. Putting estimations \eqref{lemm1:pre_final_q_norm_power4} and \eqref{lemm1:pre_final_big_q_power4} together we get inequality
\begin{equation}\label{lemm1:expect_q_norm_power4}
    \sqrt{\EE[||e||_q^4]}\leqslant \min\{2q-1,32\ln n -8\}n^{\frac{2}{q}-1}.
\end{equation}

Now we are going to find $\EE[\langle s,\,e\rangle^4]$, where $s\in\R^n$ is some vector. Let $S_n(r)$ be a surface area of $n$-dimensional Euclidean sphere with radius $r$ and $d\sigma(e)$ be unnormalized uniform measure on $n$-dimensional Euclidean sphere. From this it follows that $S_n(r) = S_n(1)r^{n-1},\, \frac{S_{n-1}(1)}{S_n(1)} = \frac{n-1}{n\sqrt{\pi}}\frac{\Gamma(\frac{n+2}{2})}{\Gamma(\frac{n+1}{2})}$. Besides, let $\varphi$ be the angle between $s$ and $e$.
Then
\begin{equation}\label{lemm1:inner_product_power4}
    \begin{array}{rl}
        \EE[\langle s,\, e\rangle^4] = \frac{1}{S_n(1)}\int\limits_{S}\langle s,\, e\rangle^4d\sigma(\varphi) = \frac{1}{S_n(1)}\int\limits_0^\pi||s||_2^4\cos^3\varphi S_{n-1}(\sin\varphi)d\varphi\\
        = ||s||_2^4\frac{S_{n-1}(1)}{S_n(1)}\int\limits_{0}^\pi\cos^4\varphi\sin^{n-2}\varphi d\varphi = ||s||_2^4\cdot\frac{n-1}{n\sqrt{\pi}}\frac{\Gamma(\frac{n+2}{2})}{\Gamma(\frac{n+1}{2})}\int\limits_{0}^\pi\cos^4\varphi\sin^{n-2}\varphi d\varphi.
    \end{array}
\end{equation}
Compute the integral:
\begin{equation*}
    \begin{array}{rl}
        \int\limits_0^\pi\cos^4\varphi\sin^{n-2}\varphi d\varphi = 2\int\limits_0^{\frac{\pi}{2}}\cos^4\varphi\sin^{n-2}\varphi d\varphi = / t=\sin^2\varphi/\\
        = \int\limits_0^{\frac{\pi}{2}}t^{\frac{n-3}{2}}(1-t)^{\frac{3}{2}}dt = B(\frac{n-1}{2},\frac{5}{2}) = \frac{\Gamma(\frac{5}{2})\Gamma(\frac{n-1}{2})}{\Gamma(\frac{n+4}{2})} = \frac{\frac{3}{2}\cdot\frac{1}{2}\Gamma(\frac{1}{2})\Gamma(\frac{n-1}{2})}{\frac{n+2}{2}\cdot\Gamma(\frac{n+2}{2})} = \frac{3}{n+2}\cdot\frac{\sqrt{\pi}\Gamma(\frac{n-1}{2})}{2\Gamma(\frac{n+2}{2})}.
    \end{array}
\end{equation*}
From this and \eqref{lemm1:inner_product_power4} we obtain
\begin{equation}\label{lemm1:inner_product_power4_final}
    \begin{array}{rl}
        \EE[\langle s,\, e\rangle^4] = ||s||_2^4\cdot\frac{n-1}{n\sqrt{\pi}}\frac{\Gamma(\frac{n+2}{2})}{\Gamma(\frac{n+1}{2})}\cdot\frac{3}{n+2}\cdot\frac{\sqrt{\pi}\Gamma(\frac{n-1}{2})}{2\Gamma(\frac{n+2}{2})}\\
        = ||s||_2^4\cdot\frac{3(n-1)}{2n(n+2)}\cdot\frac{\Gamma(\frac{n-1}{2})}{\frac{n-1}{2}\Gamma(\frac{n-1}{2})}
        = \frac{3||s||_2^4}{n(n+2)} \overset{\circledOne}{\leqslant} \frac{3||s||_2^4}{n^2}.
    \end{array}
\end{equation}

To prove \eqref{lemm1:expect_inner_product}, it remains to use \eqref{lemm1:expect_q_norm_power4}, \eqref{lemm1:inner_product_power4_final} and Cauchy-Schwartz inequality ($(\EE[XY])^2\leqslant\EE[X^2]\cdot\EE[Y^2]$):
\begin{equation*}
    \begin{array}{rl}
        \EE[\langle s,\,e\rangle^2 ||e||_q^2] \overset{\circledOne}{\leqslant} \sqrt{\EE[\langle s,\,e\rangle^4]\cdot\EE[||e||_q^4]} \leqslant \sqrt{3}||s||_2^2\min\{2q-1,32\ln n -8\}n^{\frac{2}{q}-2}.
    \end{array}
\end{equation*}

\section{Technical Results}
\begin{lemma}\label{stoh:technical_lemma}
    Let $a_0,\ldots,a_{N-1}, b, R_1,\ldots, R_{N-1}$ be non-negative numbers such that
    \begin{equation}\label{technical_lemma_assumption}
        R_{l} \leqslant \sqrt{2}\cdot\sqrt{\left(\sum\limits_{k=0}^{l-1}a_k + b\sum\limits_{k=1}^{l-1}\alpha_{k+1}R_k \right)}\quad l=1,\ldots,N,
    \end{equation}
    where $\alpha_{k+1} = \frac{k+2}{96n^2\rho_nL_2}$ for all $k\in\NN$. Then for $l=1,\ldots,N$
    \begin{equation}\label{technical_inequality_for_induction}
        \sum\limits_{k=0}^{l-1}a_k + b\sum\limits_{k=1}^{l-1} \alpha_{k+1}R_k \leqslant \left(\sqrt{\sum\limits_{k=0}^{l-1}a_k} + \sqrt{2}b\cdot\frac{l^2}{96n^2\rho_nL_2}\right)^2.
    \end{equation}
\end{lemma}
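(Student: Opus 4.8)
The plan is to prove \eqref{technical_inequality_for_induction} by induction on $l$. To lighten notation I would write $C := 96n^2\rho_nL_2$, so that $\alpha_{k+1} = (k+2)/C$, set $S_l := \sum_{k=0}^{l-1}a_k$, and denote by $B_l := \sum_{k=0}^{l-1}a_k + b\sum_{k=1}^{l-1}\alpha_{k+1}R_k$ the left-hand side of \eqref{technical_inequality_for_induction}; note that $S_l$ is non-decreasing in $l$ and that assumption \eqref{technical_lemma_assumption} reads precisely $R_l \leqslant \sqrt{2}\sqrt{B_l}$. The goal is thus to show $B_l \leqslant \bigl(\sqrt{S_l} + \sqrt{2}\,bl^2/C\bigr)^2$ for $l=1,\dots,N$.

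The base case $l=1$ is immediate: the sum over $k$ in $B_1$ is empty, so $B_1 = S_1 = a_0 \leqslant \bigl(\sqrt{a_0} + \sqrt2\,b/C\bigr)^2$. For the inductive step I would assume the bound for all indices $1,\dots,l-1$ and prove it for $l$. Taking square roots in the induction hypothesis and combining with $R_k \leqslant \sqrt2\sqrt{B_k}$ and with the monotonicity $S_k \leqslant S_l$ gives the uniform pointwise estimate $R_k \leqslant \sqrt2\sqrt{S_l} + 2bk^2/C$ for every $k \leqslant l-1$. Substituting this into $B_l = S_l + b\sum_{k=1}^{l-1}\alpha_{k+1}R_k$ reduces the statement to two elementary weighted-sum bounds, $\sum_{k=1}^{l-1}\alpha_{k+1} \leqslant 2l^2/C$ and $\sum_{k=1}^{l-1}\alpha_{k+1}k^2 \leqslant l^4/C$. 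The first follows from the identity $\sum_{k=0}^{l-1}\alpha_{k+1} = l(l+3)/(2C)$ (already used in the proof of Lemma~\ref{Lm:ARDFDSLikeConv}) together with $l+3\leqslant 4l$; the second from $k+2\leqslant 3k$ and $\sum_{k=1}^{l-1}k^3 = \bigl((l-1)l/2\bigr)^2 \leqslant l^4/4$. Plugging these in produces
$$
B_l \;\leqslant\; S_l + \frac{2\sqrt2\,bl^2\sqrt{S_l}}{C} + \frac{2b^2l^4}{C^2} \;=\; \Bigl(\sqrt{S_l} + \frac{\sqrt2\,bl^2}{C}\Bigr)^2 ,
$$
which completes the induction.

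I expect the only delicate point to be the calibration of the constants in the two weighted-sum bounds: the coefficients on $l^2/C$ and on $l^4/C$ have to be exactly those that let the three terms recombine into the perfect square $\bigl(\sqrt{S_l}+\sqrt2\,bl^2/C\bigr)^2$, rather than into a merely comparable expression, so one must choose the crude inequalities $l+3\leqslant 4l$ and $k+2\leqslant 3k$ (or sharper ones) consistently. It is also worth keeping in mind that the induction must be run in its strong form, since $R_1,\dots,R_{l-1}$ all enter $B_l$ at once; this is harmless because $\alpha_{k+1}$ is increasing in $k$ and $S_k$ is monotone, so a single uniform bound on the $R_k$'s is enough.
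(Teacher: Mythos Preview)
Your proof is correct, but it takes a genuinely different route from the paper's. The paper uses ordinary (weak) induction from $l$ to $l+1$: it writes $B_{l+1} = B_l + a_l + b\alpha_{l+1}R_l$, applies the single induction hypothesis $B_l \leqslant (\sqrt{S_l} + \sqrt{2}\,bl^2/C)^2$ to the first piece and $R_l \leqslant \sqrt{2}\sqrt{B_l}$ to the last, and then checks the two one-step inequalities $\frac{l^2}{C} + \frac{\alpha_{l+1}}{2} \leqslant \frac{(l+1)^2}{C}$ and $\frac{l^4}{C^2} + \alpha_{l+1}\frac{l^2}{C} \leqslant \frac{(l+1)^4}{C^2}$. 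You instead re-sum all of $R_1,\dots,R_{l-1}$ from scratch via strong induction and the two global bounds $\sum_{k=1}^{l-1}\alpha_{k+1} \leqslant 2l^2/C$ and $\sum_{k=1}^{l-1}\alpha_{k+1}k^2 \leqslant l^4/C$. Both work: the paper's telescoping version needs only the single previous hypothesis and rather tight pointwise estimates, while yours is more direct but leans on cruder summation inequalities ($l+3\leqslant 4l$, $k+2\leqslant 3k$) whose constants happen to land exactly on the coefficients of the target perfect square. Your closing remark that the induction \emph{must} be strong is therefore slightly off --- the paper's one-step peeling shows that weak induction suffices.
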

\begin{proof}
For $l=1$ it is trivial inequality. Assume that \eqref{technical_inequality_for_induction} holds for some $l < N$ and prove it for $l+1$. From the induction assumption and \eqref{technical_lemma_assumption} we obtain
\begin{equation}\label{technical_distance_estimation_3}
    \begin{array}{rl}
        R_l \leqslant \sqrt{2}\left(\sqrt{\sum\limits_{k=0}^{l-1}a_k} + \sqrt{2}b\cdot\frac{l^2}{96n^2\rho_nL_2}\right),
    \end{array}
\end{equation}
whence
\begin{equation*}
    \begin{array}{rl}
        \sum\limits_{k=0}^{l}a_k + b\sum\limits_{k=1}^{l}\alpha_{k+1}R_k = \sum\limits_{k=0}^{l-1}a_k + b\sum\limits_{k=1}^{l-1}\alpha_{k+1}R_k + a_l + b\alpha_{l+1}R_{l}\\
        \overset{\circledOne}{\leqslant} \left(\sqrt{\sum\limits_{k=0}^{l-1}a_k} + \sqrt{2}b\cdot\frac{l^2}{96n^2\rho_nL_2}\right)^2 + a_l
        + \sqrt{2}b\alpha_{l+1}\left(\sqrt{\sum\limits_{k=0}^{l-1}a_k} + \sqrt{2}b\cdot\frac{l^2}{96n^2\rho_nL_2}\right)\\
        = \sum\limits_{k=0}^{l}a_k + 2\sqrt{\sum\limits_{k=0}^{l-1}a_k}\cdot \sqrt{2}b\frac{l^2}{96n^2\rho_nL_2} + 2b^2\frac{l^4}{(96n^2\rho_nL_2)^2}
        + \sqrt{2}b\alpha_{l+1}\left(\sqrt{\sum\limits_{k=0}^{l-1}a_k} + \sqrt{2}b\cdot\frac{l^2}{96n^2\rho_nL_2}\right)\\
        = \sum\limits_{k=0}^{l}a_k + 2\sqrt{\sum\limits_{k=0}^{l-1}a_k}\cdot\sqrt{2}b\left(\frac{l^2}{96n^2\rho_nL_2} + \frac{\alpha_{l+1}}{2}\right) + 2b^2\left(\frac{l^4}{(96n^2\rho_nL_2)^2} + \alpha_{l+1}\cdot\frac{l^2}{96n^2\rho_nL_2}\right)\\
        \overset{\circledTwo}{\leqslant} \sum\limits_{k=0}^{l}a_k + 2\sqrt{\sum\limits_{k=0}^{l}a_k}\cdot \sqrt{2}b\frac{(l+1)^2}{96n^2\rho_nL_2} + 2b^2\frac{(l+1)^4}{(96n^2\rho_nL_2)^2}
        = \left(\sqrt{\sum\limits_{k=0}^{l}a_k} + \sqrt{2}b\cdot\frac{(l+1)^2}{96n^2\rho_nL_2}\right)^2,
    \end{array}    
\end{equation*}
where $\circledOne$ follows from the induction assumption and \eqref{technical_distance_estimation_3}, $\circledTwo$ is because $\sum\limits_{k=0}^{l-1}a_k\leqslant \sum\limits_{k=0}^{l}a_k$ and
\begin{equation*}
    \begin{array}{rl}
        \frac{l^2}{96n^2\rho_nL_2} + \frac{\alpha_{l+1}}{2} = \frac{2l^2 + l + 2}{192n^2\rho_nL_2} \leqslant \frac{(l+1)^2}{96n^2\rho_nL_2},\\
        \frac{l^4}{(96n^2\rho_nL_2)^2} + \alpha_{l+1}\cdot\frac{l^2}{96n^2\rho_nL_2} \leqslant \frac{l^4 + (l+2)l^2}{(96n^2\rho_nL_2)^2} \leqslant \frac{(l+1)^4}{(96n^2\rho_nL_2)^2}.
    \end{array}
\end{equation*}
\end{proof}

\begin{lemma}\label{stoh:technical_lemma_non_acc}
    Let $a_0,\ldots,a_{N-1}, b, R_1,\ldots, R_{N-1}$ be non-negative numbers such that
    \begin{equation}\label{technical_lemma_assumption_non_acc}
        R_{l} \leqslant \sqrt{2}\cdot\sqrt{\left(\sum\limits_{k=0}^{l-1}a_k + b\alpha\sum\limits_{k=1}^{l-1}R_k \right)}\quad l=1,\ldots,N.
    \end{equation}
    Then for $l=1,\ldots,N$
    \begin{equation}\label{technical_inequality_for_induction_non_acc}
        \sum\limits_{k=0}^{l-1}a_k + b\alpha\sum\limits_{k=1}^{l-1} R_k \leqslant \left(\sqrt{\sum\limits_{k=0}^{l-1}a_k} + \sqrt{2}b\alpha l\right)^2.
    \end{equation}
\end{lemma}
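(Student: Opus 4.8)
The plan is to prove \eqref{technical_inequality_for_induction_non_acc} by induction on $l$, following the proof of the companion Lemma~\ref{stoh:technical_lemma} but in a streamlined form, since here the coefficient $\alpha$ does not depend on the summation index.

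First I would dispose of the base case $l=1$: the sum $\sum_{k=1}^{0}R_k$ is empty, so the left-hand side of \eqref{technical_inequality_for_induction_non_acc} equals $a_0$, and $a_0 \leqslant \left(\sqrt{a_0}+\sqrt{2}b\alpha\right)^2$ since $a_0,b,\alpha \geqslant 0$.

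For the inductive step, assume \eqref{technical_inequality_for_induction_non_acc} holds for some $l$ with $1\leqslant l\leqslant N-1$. Substituting this induction hypothesis into the assumption \eqref{technical_lemma_assumption_non_acc} taken at index $l$, and using that $\sqrt{\sum_{k=0}^{l-1}a_k}+\sqrt{2}b\alpha l\geqslant 0$, I would first derive the clean bound $R_l \leqslant \sqrt{2}\left(\sqrt{\sum_{k=0}^{l-1}a_k}+\sqrt{2}b\alpha l\right)$. Then I split $\sum_{k=0}^{l}a_k + b\alpha\sum_{k=1}^{l}R_k = \left(\sum_{k=0}^{l-1}a_k + b\alpha\sum_{k=1}^{l-1}R_k\right) + a_l + b\alpha R_l$, bound the parenthesised part by the induction hypothesis and $b\alpha R_l$ by the bound just derived. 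Writing $S:=\sqrt{\sum_{k=0}^{l-1}a_k}$ and $c:=\sqrt{2}b\alpha$, the resulting upper bound is $(S+cl)^2 + a_l + c(S+cl)$, and the goal is to show this is at most $\left(\sqrt{S^2+a_l}+c(l+1)\right)^2$, which equals $\left(\sqrt{\sum_{k=0}^{l}a_k}+\sqrt{2}b\alpha(l+1)\right)^2$, thereby closing the induction.

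The only step requiring genuine care is that last elementary inequality. Expanding both sides and cancelling the common terms $S^2+a_l+c^2l^2$, it reduces to $2Scl+cS+c^2l \leqslant 2c(l+1)\sqrt{S^2+a_l}+2c^2l+c^2$; since $a_l\geqslant0$ gives $\sqrt{S^2+a_l}\geqslant S$, the right-hand side is at least $2cSl+2cS+2c^2l+c^2$, so after cancelling $2Scl=2cSl$ it suffices to check $cS+c^2l \leqslant 2cS+2c^2l+c^2$, i.e. $0\leqslant cS+c^2l+c^2$, which is immediate from non-negativity of $S$ and $c$. This is the single place where the assumed non-negativity of all of $a_k,R_k,b,\alpha$ is used essentially; the remainder is the same bookkeeping as in Lemma~\ref{stoh:technical_lemma}, only with the constant $\alpha$ replacing the sequence $\alpha_{k+1}$, which removes the extra arithmetic estimates needed there.
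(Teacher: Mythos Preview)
Your proof is correct and follows essentially the same approach as the paper: induction on $l$, the same bound $R_l \leqslant \sqrt{2}\bigl(\sqrt{\sum_{k=0}^{l-1}a_k}+\sqrt{2}b\alpha l\bigr)$ obtained from the hypothesis, the same splitting of the sum, and the same final comparison, only with the algebra organized via the abbreviations $S,c$ rather than by regrouping coefficients as the paper does. The one cosmetic point is that the lemma statement does not list $\alpha$ among the non-negative quantities, so you are tacitly assuming $\alpha\geqslant 0$; the paper's proof does the same, and in the application $\alpha=\frac{1}{48n\rho_n L_2}>0$, so this is harmless.
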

\begin{proof}
For $l=1$ it is trivial inequality. Assume that \eqref{technical_inequality_for_induction_non_acc} holds for some $l < N$ and prove it for $l+1$. From the induction assumption and \eqref{technical_lemma_assumption_non_acc} we obtain
\begin{equation}\label{technical_distance_estimation_3_non_acc}
    \begin{array}{rl}
        R_l \leqslant \sqrt{2}\left(\sqrt{\sum\limits_{k=0}^{l-1}a_k} + \sqrt{2}b\alpha l\right),
    \end{array}
\end{equation}
whence
\begin{equation*}
    \begin{array}{rl}
        \sum\limits_{k=0}^{l}a_k + b\alpha\sum\limits_{k=1}^{l}R_k = \sum\limits_{k=0}^{l-1}a_k + b\alpha\sum\limits_{k=1}^{l-1}R_k + a_l + b\alpha R_{l}\\
        \overset{\circledOne}{\leqslant} \left(\sqrt{\sum\limits_{k=0}^{l-1}a_k} + \sqrt{2}b\alpha l\right)^2 + a_l
        + \sqrt{2}b\alpha\left(\sqrt{\sum\limits_{k=0}^{l-1}a_k} + \sqrt{2}b\alpha l\right)\\
        = \sum\limits_{k=0}^{l}a_k + 2\sqrt{\sum\limits_{k=0}^{l-1}a_k}\cdot \sqrt{2}b\alpha l + 2b^2\alpha^2 l^2
        + \sqrt{2}b\alpha\left(\sqrt{\sum\limits_{k=0}^{l-1}a_k} + \sqrt{2}b\alpha l\right)\\
        = \sum\limits_{k=0}^{l}a_k + 2\sqrt{\sum\limits_{k=0}^{l-1}a_k}\cdot\sqrt{2}b\alpha\left(l + \frac{1}{2}\right) + 2b^2\alpha^2\left(l^2 + l\right)\\
        \overset{\circledTwo}{\leqslant} \sum\limits_{k=0}^{l}a_k + 2\sqrt{\sum\limits_{k=0}^{l}a_k}\cdot \sqrt{2}b\alpha (l+1) + 2b^2\alpha^2 (l+1)^2
        = \left(\sqrt{\sum\limits_{k=0}^{l}a_k} + \sqrt{2}b\alpha (l+1)\right)^2,
    \end{array}    
\end{equation*}
where $\circledOne$ follows from the induction assumption and \eqref{technical_distance_estimation_3_non_acc}, $\circledTwo$ is because $\sum\limits_{k=0}^{l-1}a_k\leqslant \sum\limits_{k=0}^{l}a_k$.
\end{proof}

{
\section{Parameters tuning}\label{sec:param_tunning}
In our analysis it is needed to choose $\alpha_{k+1} = \frac{k+2}{96n^2\rho_n L_2}$ for ARDD and $\alpha = \frac{1}{48n\rho_n L_2}$. However, one can tune these parameters in order to achieve better convergence rate in practice. In our experiments we choose $\alpha_{k+1} = \gamma\cdot \frac{k+2}{96n^2\rho_n L_2}$, $\alpha = \gamma\cdot\frac{1}{48n\rho_n L_2}$ and tune numerical factor $\gamma$. In \cite{ghadimi2013stochastic} authors prove convergence results for stepsize\footnote{
If $\sigma = 0$, then one should ignore the second term in the minimum.} $\alpha = \frac{1}{\sqrt{n+4}}\min\left\{\frac{1}{4L\sqrt{n+4}}, \frac{\tilde{D}}{\sigma \sqrt{N}}\right\}$ where $\tilde{D}$ is some numerical constant, therefore, in our experiments with RSGD we use stepsizes $\alpha = \gamma\cdot\frac{1}{\sqrt{n+4}}\min\left\{\frac{1}{4L\sqrt{n+4}}, \frac{1}{\sqrt{N}}\right\}$ where we also tune numerical factor $\gamma$.
\subsection{Nesterov's function}
One can find our numerical results with tuning stepsizes for each method in Figures~\ref{fig:nesterov_tuning_n_100}-\ref{fig:nesterov_tuning_n_5000}.
\begin{figure}[h]
    \centering
    \includegraphics[width=0.32\textwidth]{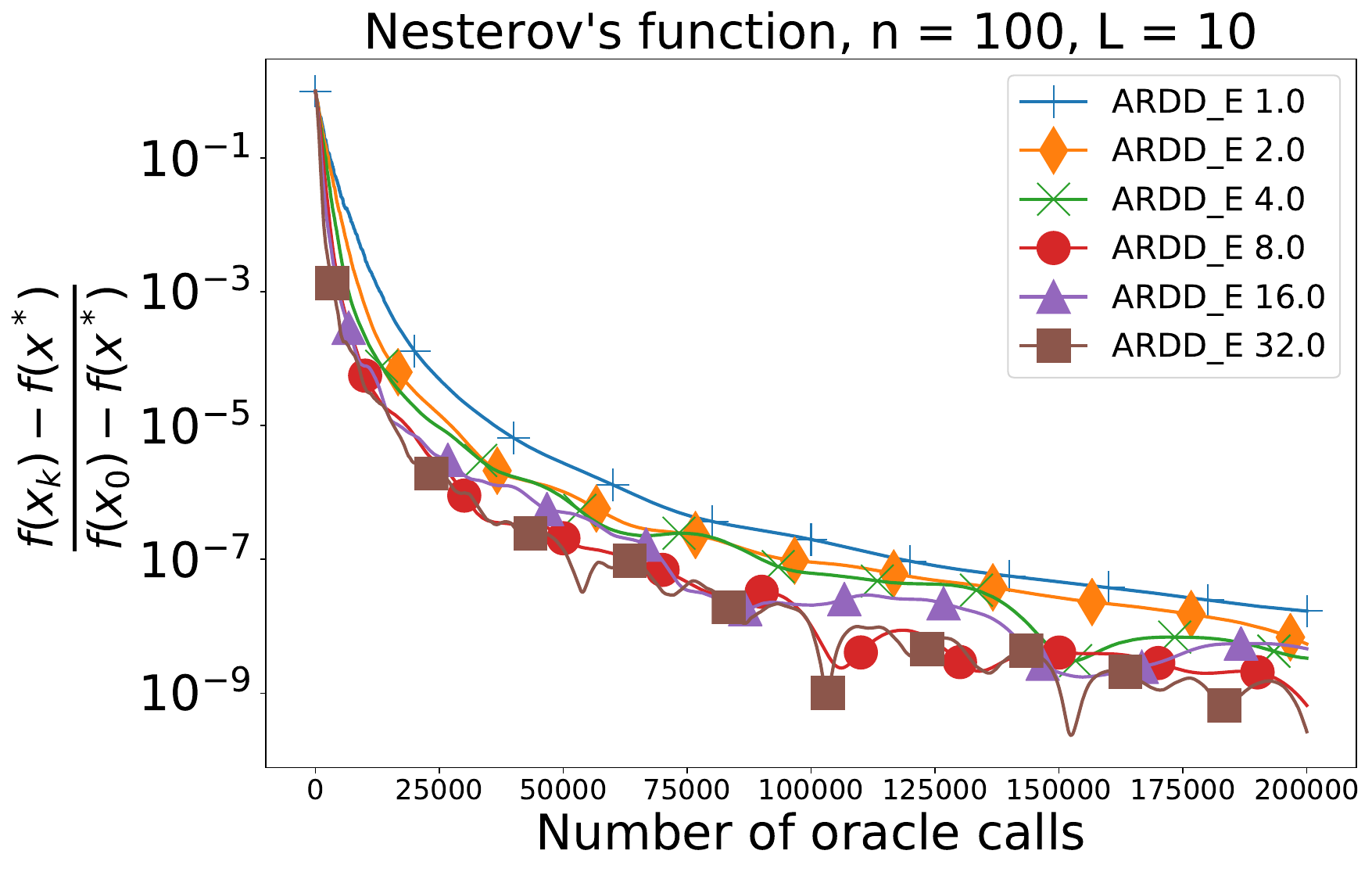}
    \includegraphics[width=0.32\textwidth]{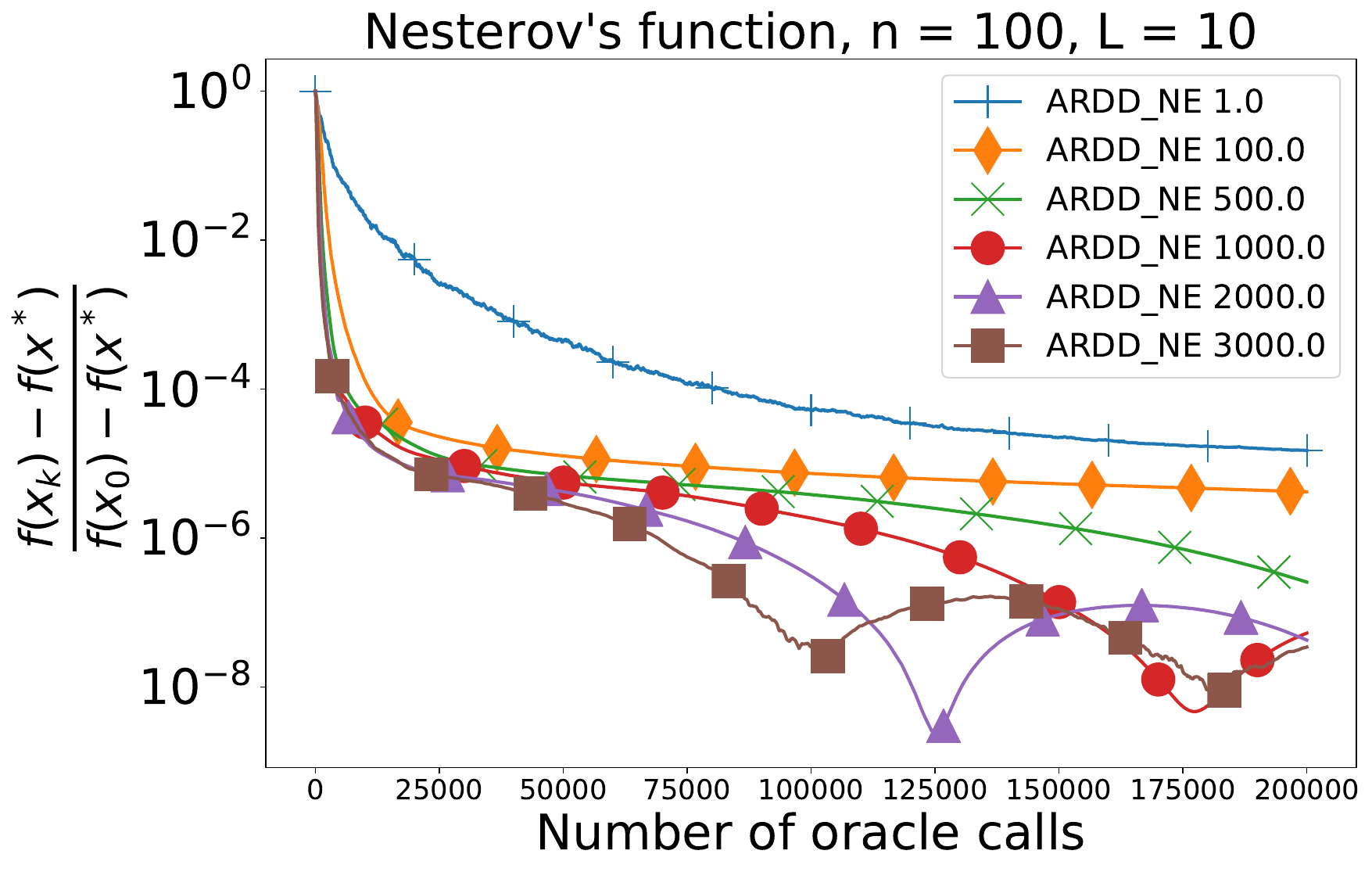}
    \includegraphics[width=0.32\textwidth]{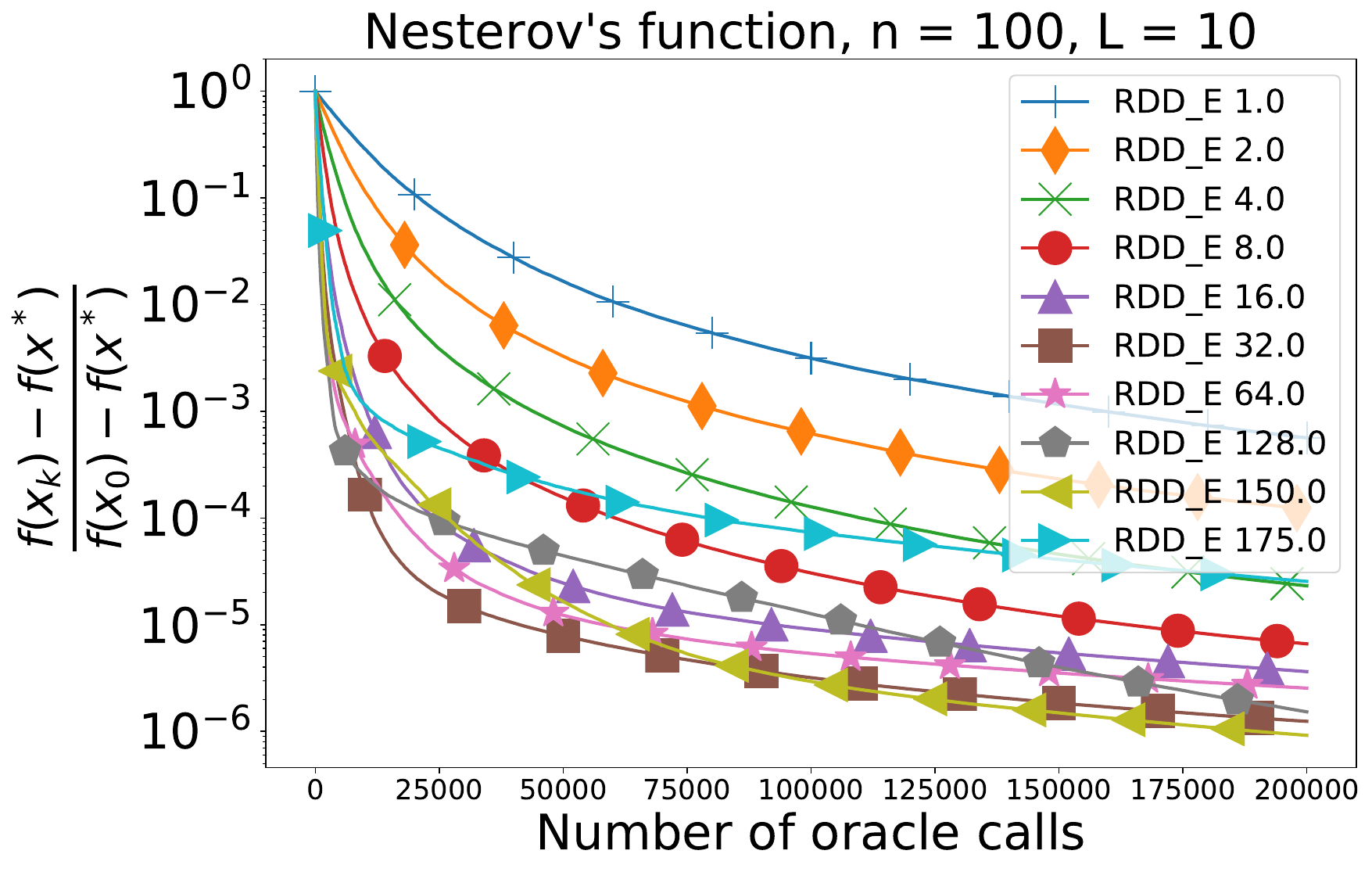}
    \includegraphics[width=0.32\textwidth]{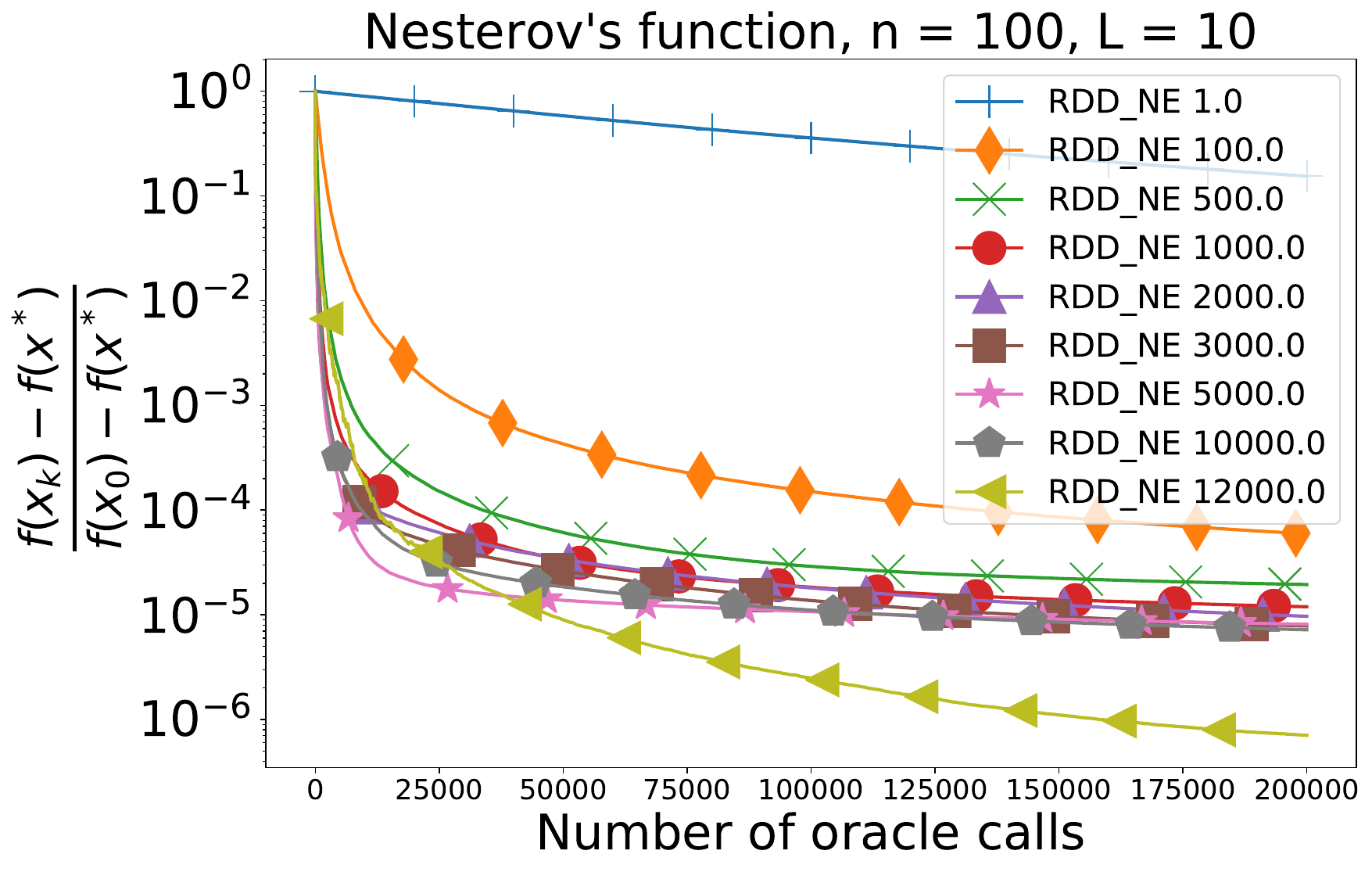}
    \includegraphics[width=0.32\textwidth]{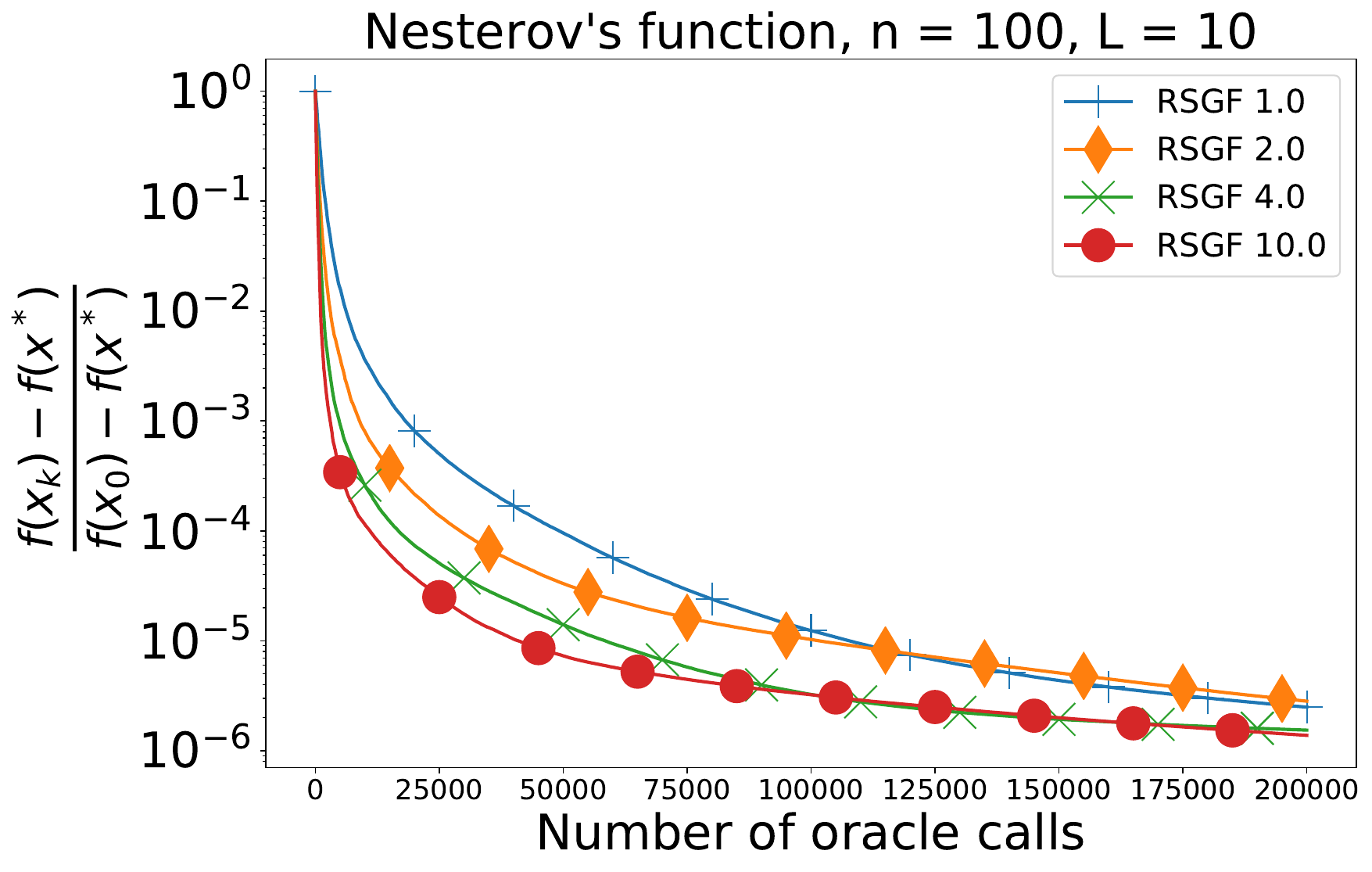}
    \caption{Stepsize tuning for ARDD, RDD and RSGF applied to minimize Nesterov's function \eqref{eq:nesterov_func}. We use {\_}E and {\_}NE to define $\ell_2$ and $\ell_1$ proximal setups respectively (see \eqref{eq:dp1} and \eqref{eq:dp2} for the details). Numbers in labels in upper right corners denote different choices of $\gamma$ that are used.}
    \label{fig:nesterov_tuning_n_100}
\end{figure}
\begin{figure}[h]
    \centering
    \includegraphics[width=0.32\textwidth]{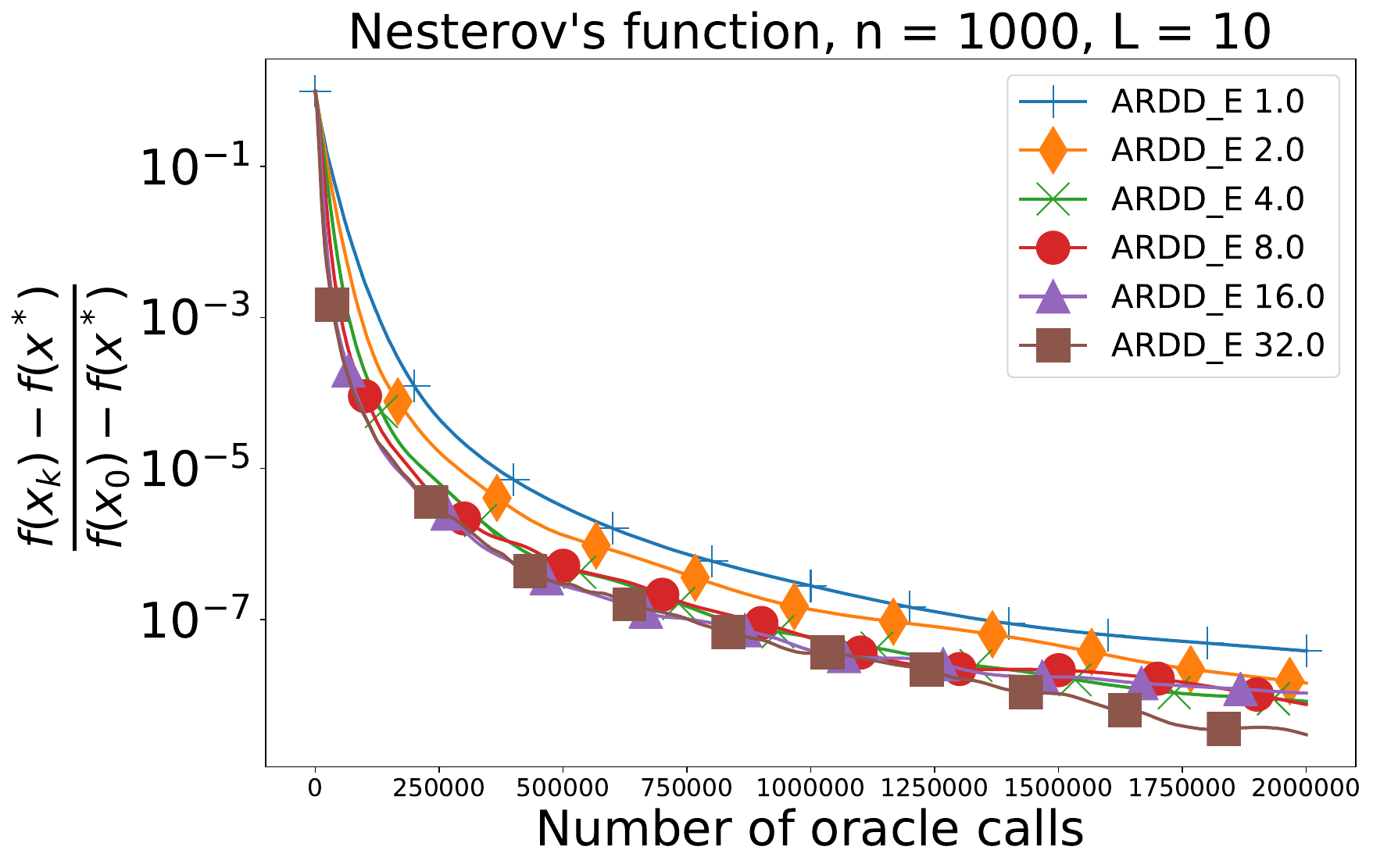}
    \includegraphics[width=0.32\textwidth]{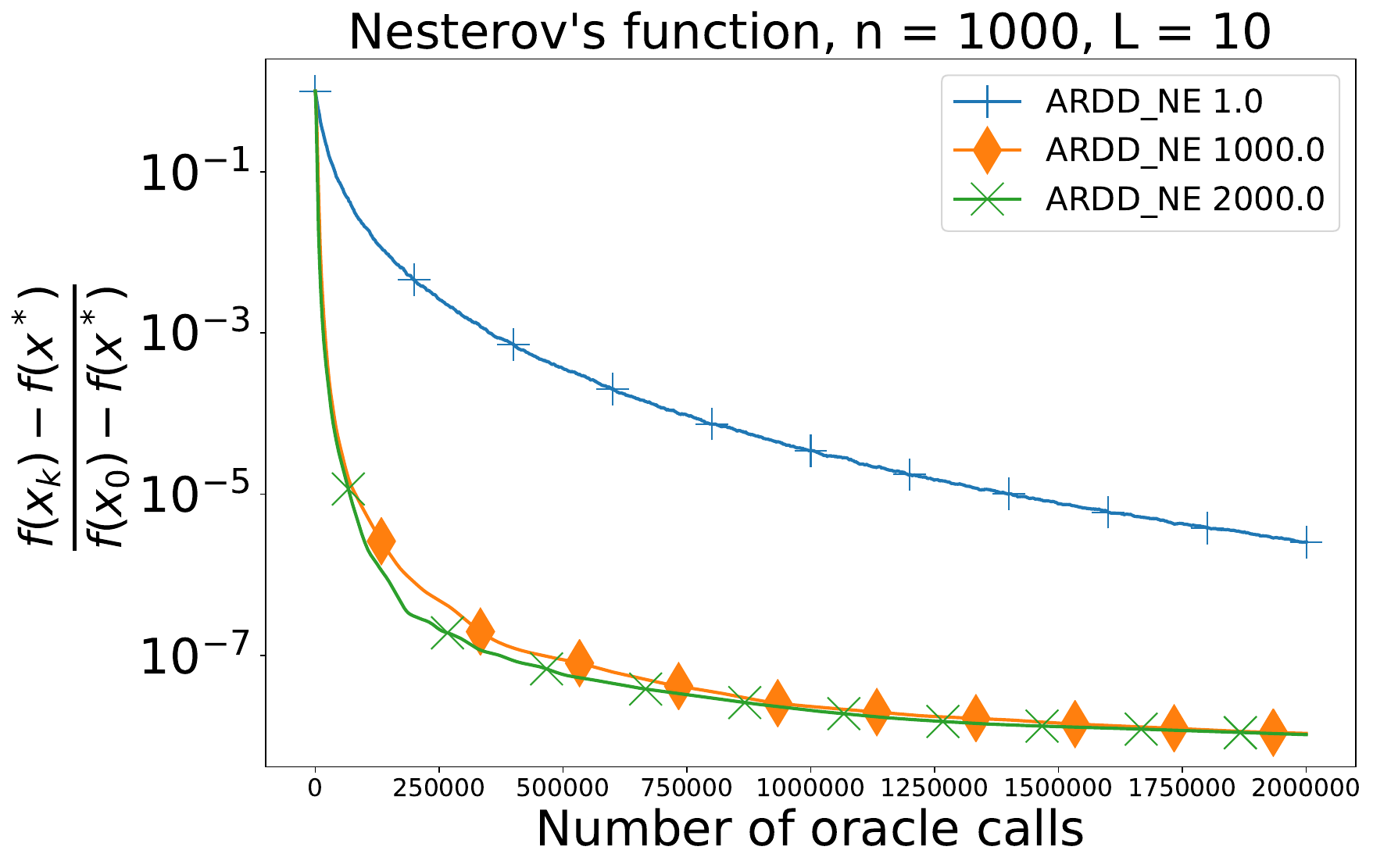}
    \includegraphics[width=0.32\textwidth]{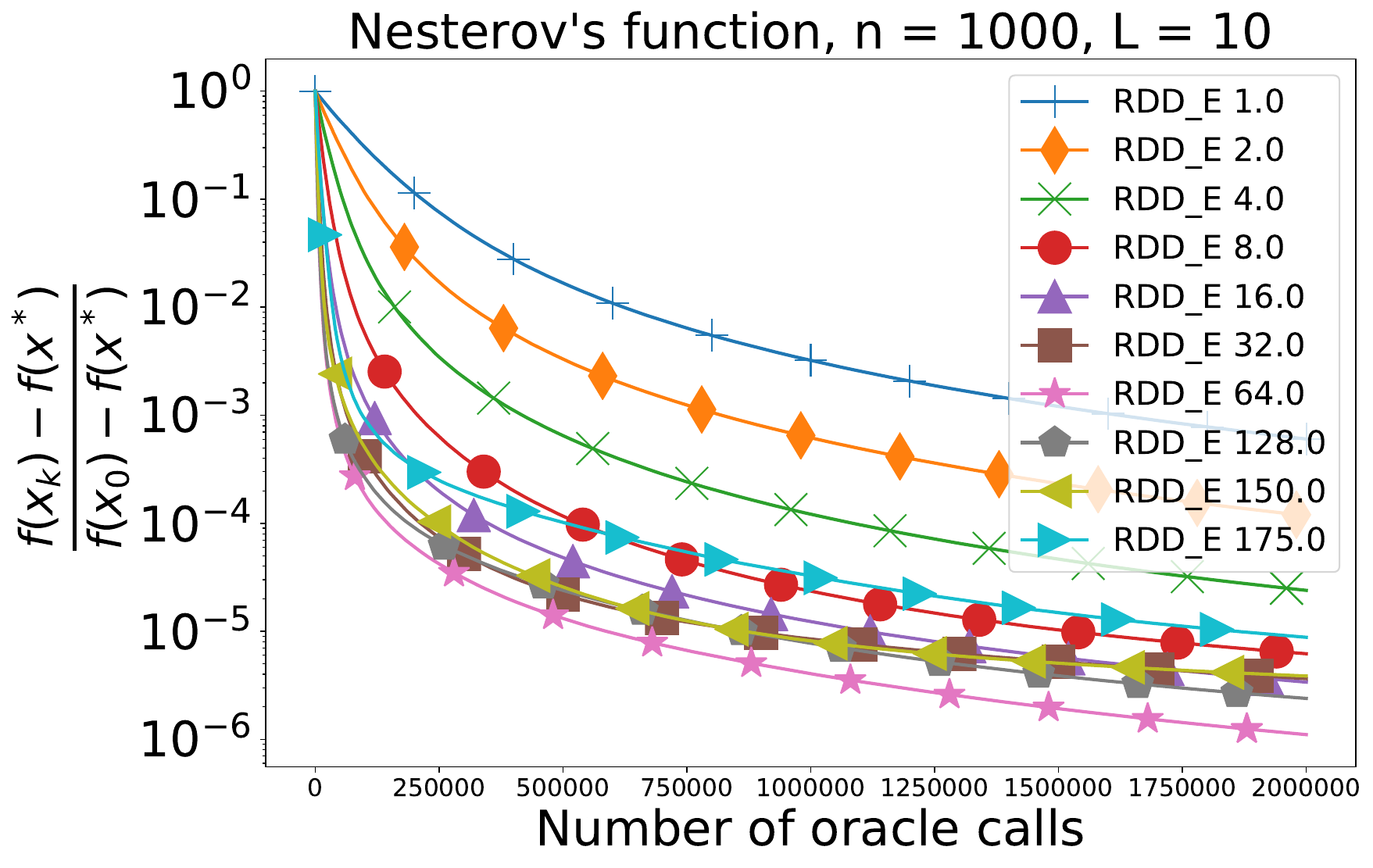}
    \includegraphics[width=0.32\textwidth]{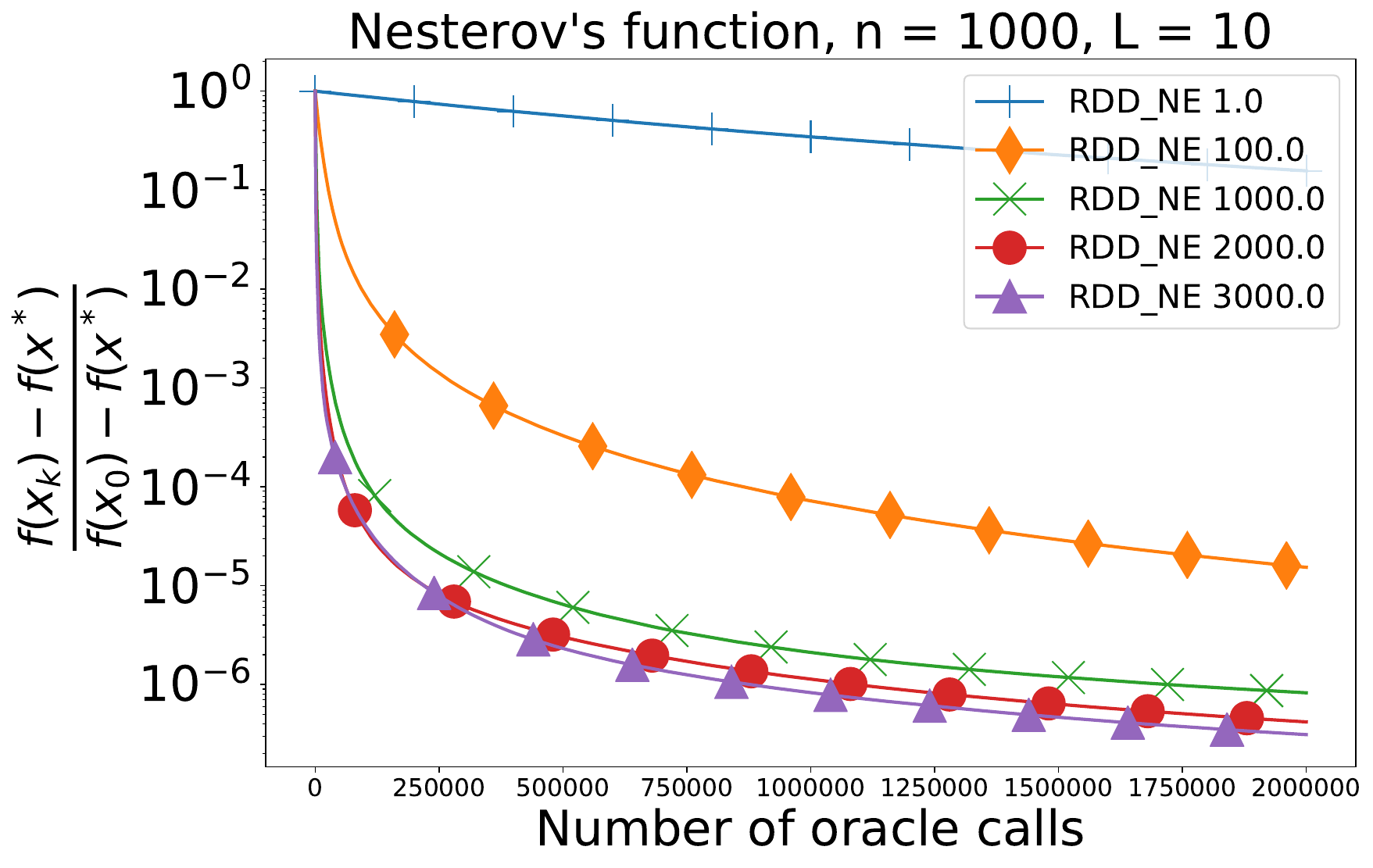}
    \includegraphics[width=0.32\textwidth]{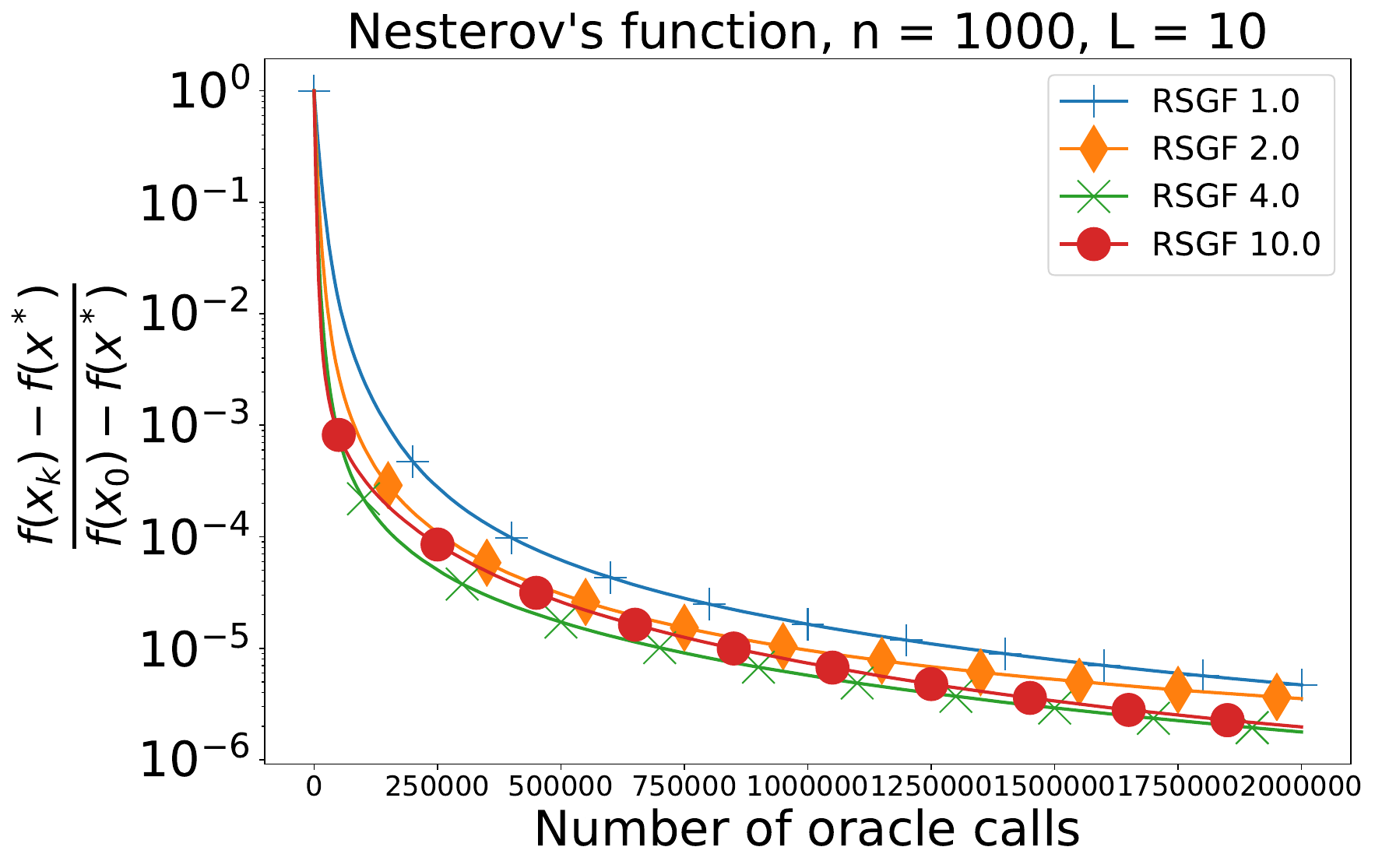}
    \caption{Stepsize tuning for ARDD, RDD and RSGF applied to minimize Nesterov's function \eqref{eq:nesterov_func}. We use {\_}E and {\_}NE to define $\ell_2$ and $\ell_1$ proximal setups respectively (see \eqref{eq:dp1} and \eqref{eq:dp2} for the details). Numbers in labels in upper right corners denote different choices of $\gamma$ that are used.}
    \label{fig:nesterov_tuning_n_1000}
\end{figure}
\begin{figure}[h]
    \centering
    \includegraphics[width=0.32\textwidth]{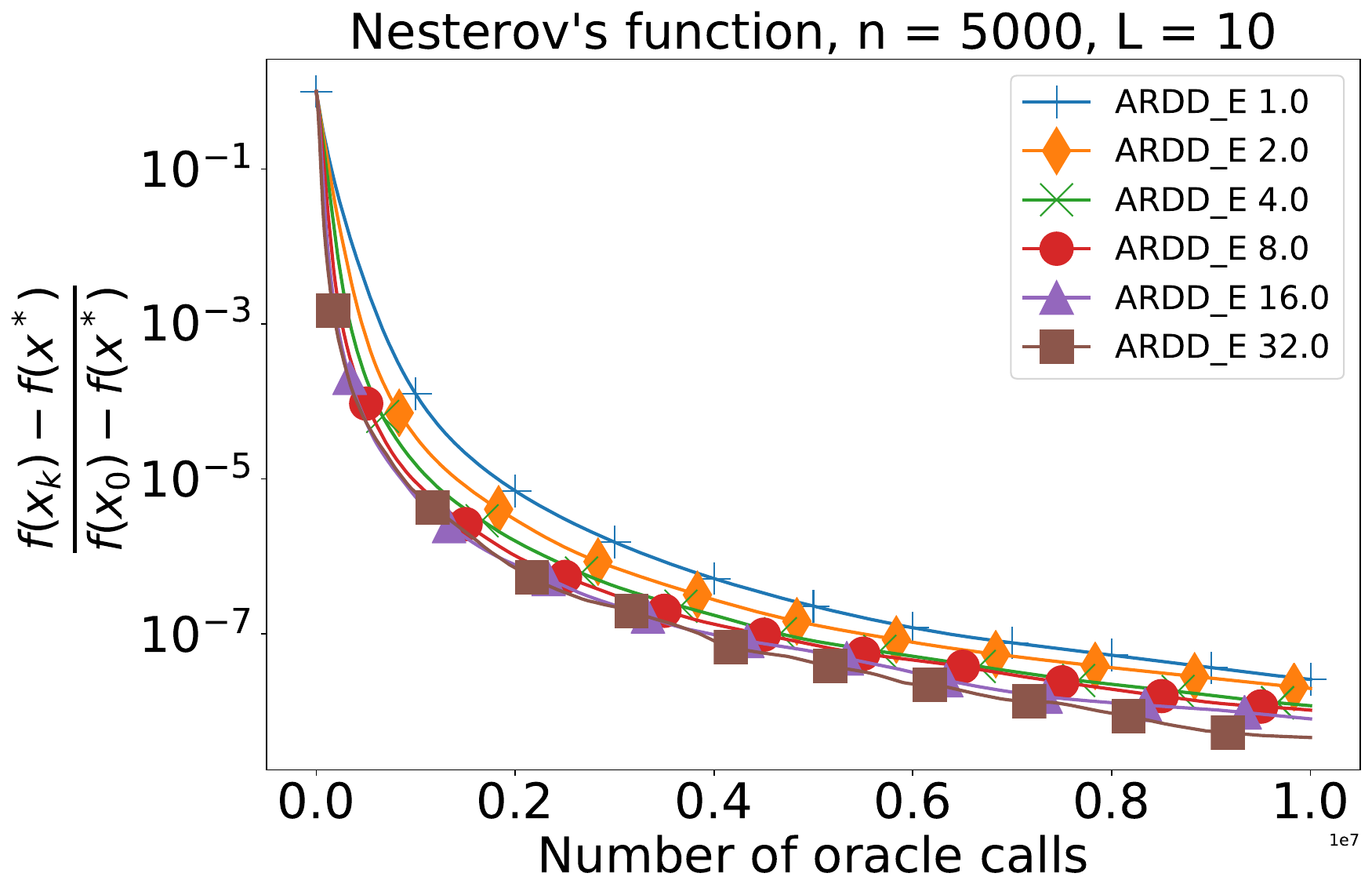}
    \includegraphics[width=0.32\textwidth]{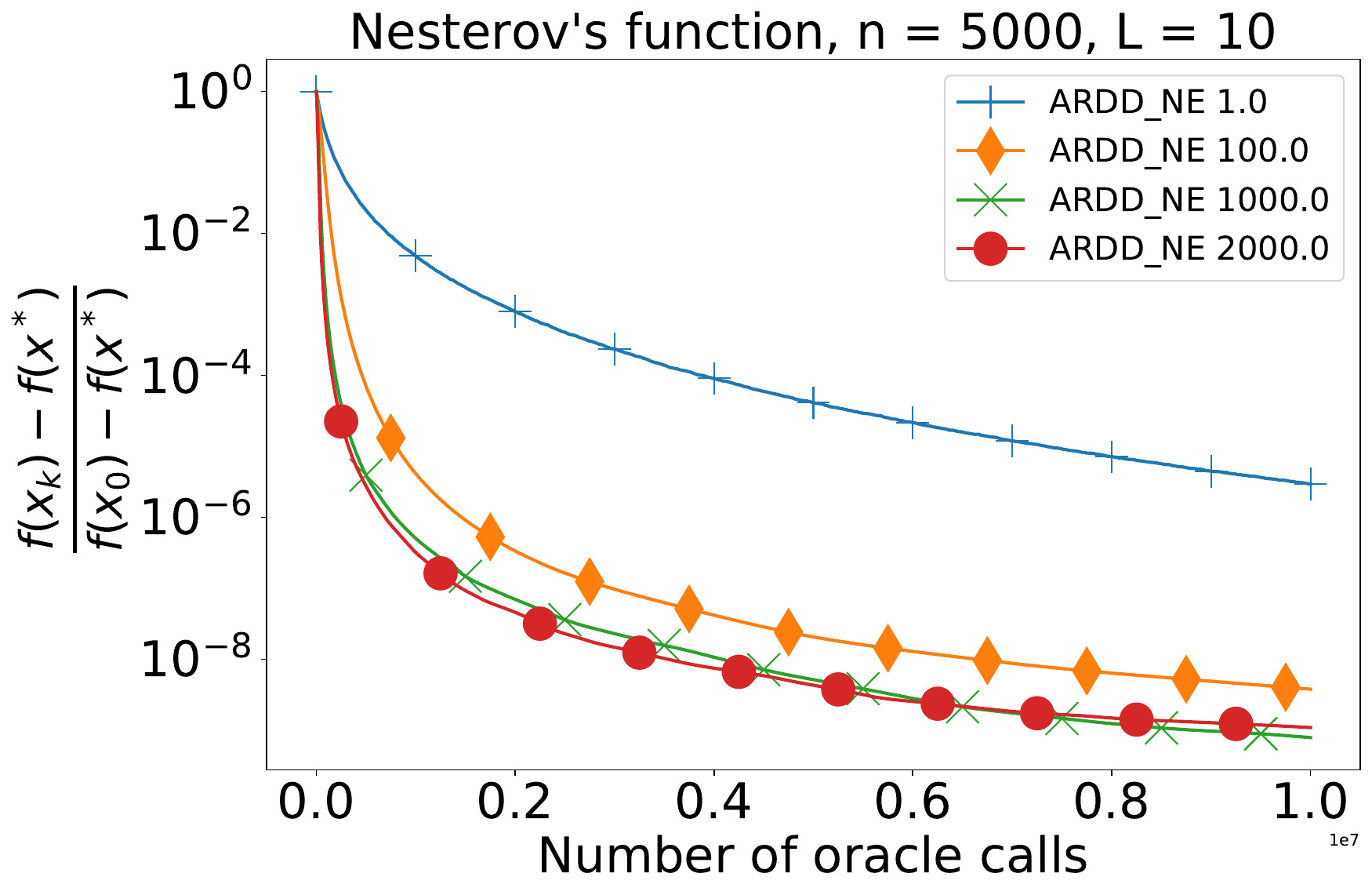}
    \includegraphics[width=0.32\textwidth]{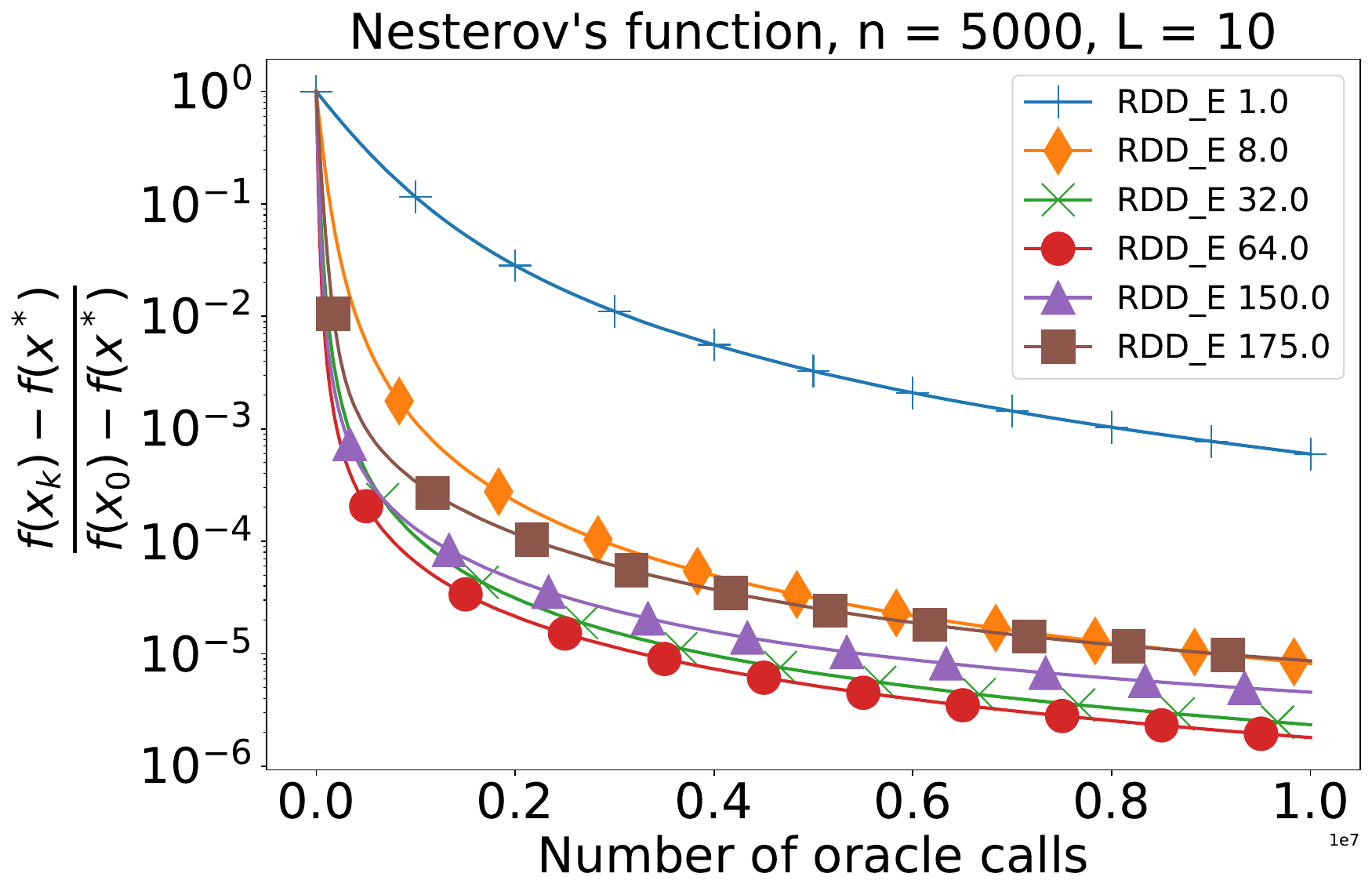}
    \includegraphics[width=0.32\textwidth]{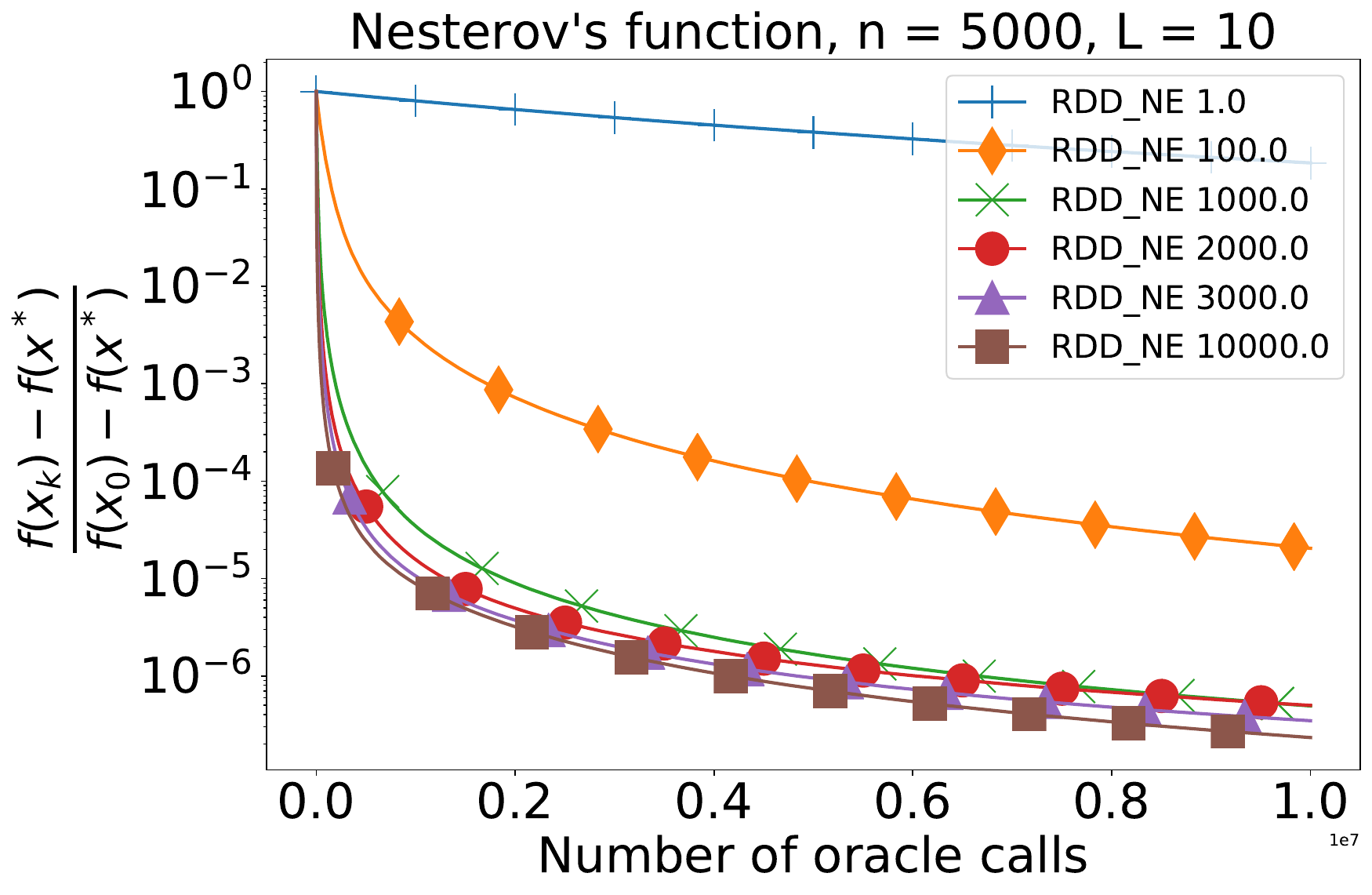}
    \includegraphics[width=0.32\textwidth]{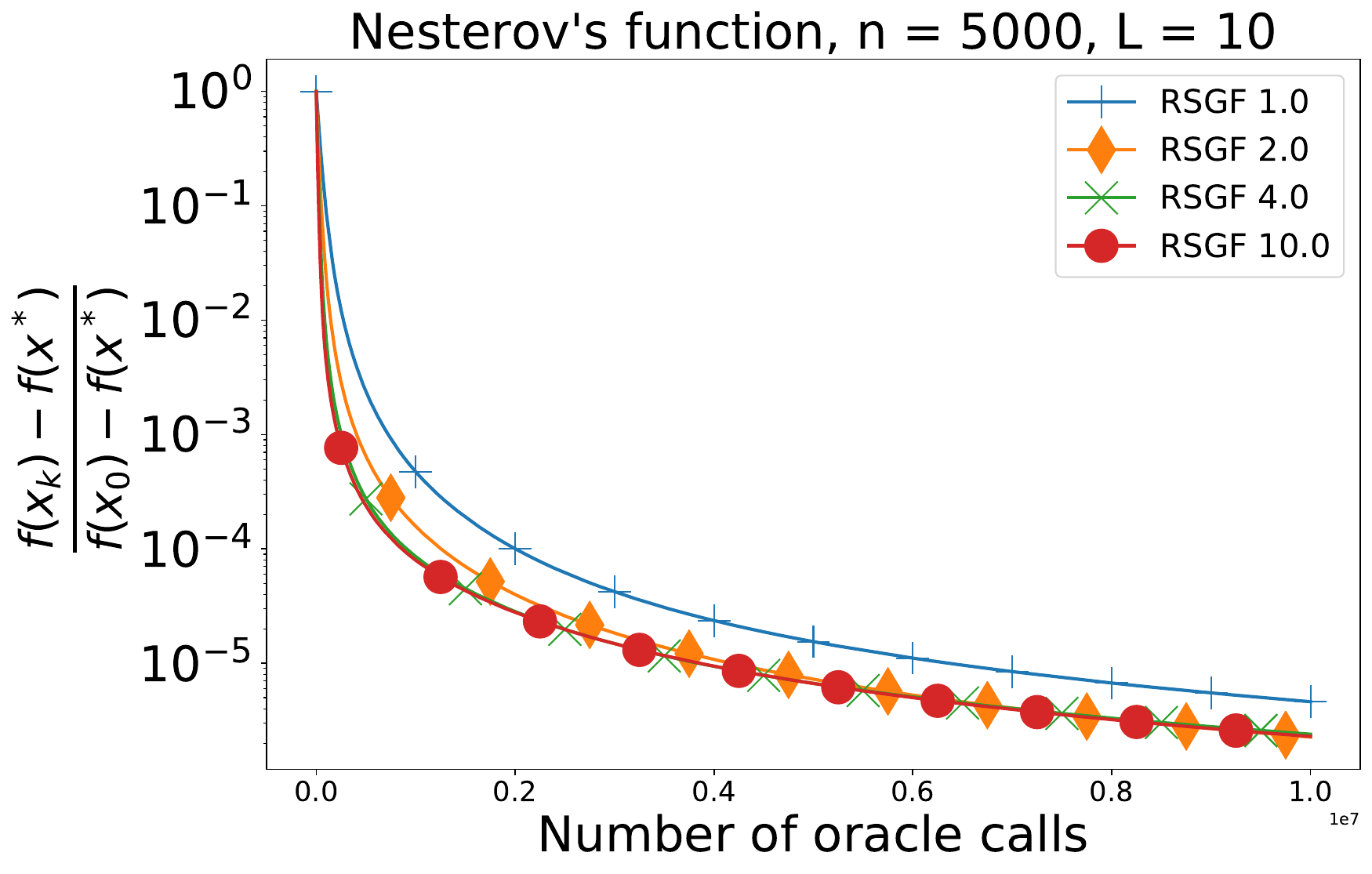}
    \caption{Stepsize tuning for ARDD, RDD and RSGF applied to minimize Nesterov's function \eqref{eq:nesterov_func}. We use {\_}E and {\_}NE to define $\ell_2$ and $\ell_1$ proximal setups respectively (see \eqref{eq:dp1} and \eqref{eq:dp2} for the details). Number of oracle calls is divided by $10^7$. Numbers in labels in upper right corners denote different choices of $\gamma$ that are used.}
    \label{fig:nesterov_tuning_n_5000}
\end{figure}
Our tests with Nesterov's function show that for this problem ARDD{\_}E and RDD{\_} work better with $\gamma \in [32,64]$ and RSGF shows the best performance with $\gamma \in [4,10]$. Interestingly, ARDD and RDD with $p=1$ require to choose $\gamma$ significantly larger (of order $10^{3}-10^4$) than for Euclidean methods in order to get competitive or even better convergence rate. Moreover, ARDD{\_}E, RDD{\_}E and RSGF disconverge for $\gamma \ge 64, 200, 20$ respectively. So, our empirical observation is as follows: ARDD and RDD with non-Euclidean proximal setup are able to converge with significantly larger stepsizes than its Euclidean counterpart.

We summarize best options for $\gamma$ that we use in the experiments presented in Section~\ref{sec:numerical_experiments} in Table~\ref{tab:nesterov_stepsize_tuning}.
\begin{table}[h]
    \centering
    \begin{tabular}{|c|c|c|c|c|c|}
         \hline
         & ARDD{\_}E & ARDD{\_}NE & RDD{\_}E & RDD{\_}NE & RSGF \\
        \hline
        $n=100$ & $32$ & $2000$ & $32$ & $12000$ & $10$ \\
        \hline
        $n=1000$ & $32$ & $2000$ & $64$ & $3000$ & $4$ \\
        \hline
        $n=5000$ & $32$ & $1000$ & $64$ & $3000$ & $10$ \\
        \hline
    \end{tabular}
    \caption{The optimal choices of $\gamma$ for ARDD, RDD and RSGF applied to minimize Nesterov's function \eqref{eq:nesterov_func} for different dimension $n$.}
    \label{tab:nesterov_stepsize_tuning}
\end{table}

\subsection{Least squares problem}
In addition to the tuning of $\gamma$ in ARDD we also tried different options for $L_2$: instead of $L_2$ from \eqref{eq:L_2_least_squares} we tried $\beta \cdot \frac{\|A\|_F}{\sqrt{r}}$ with different $\beta$. We tried $\beta = 0.001, 0.01, 0.1, 1, 2, 5$ and $10$, but the best results were obtained for $\beta = 0.01$. One can find our numerical results with tuning $\gamma$ in Figure~\ref{fig:ls_tuning}.
\begin{figure}[h]
    \centering
    \includegraphics[width=0.32\textwidth]{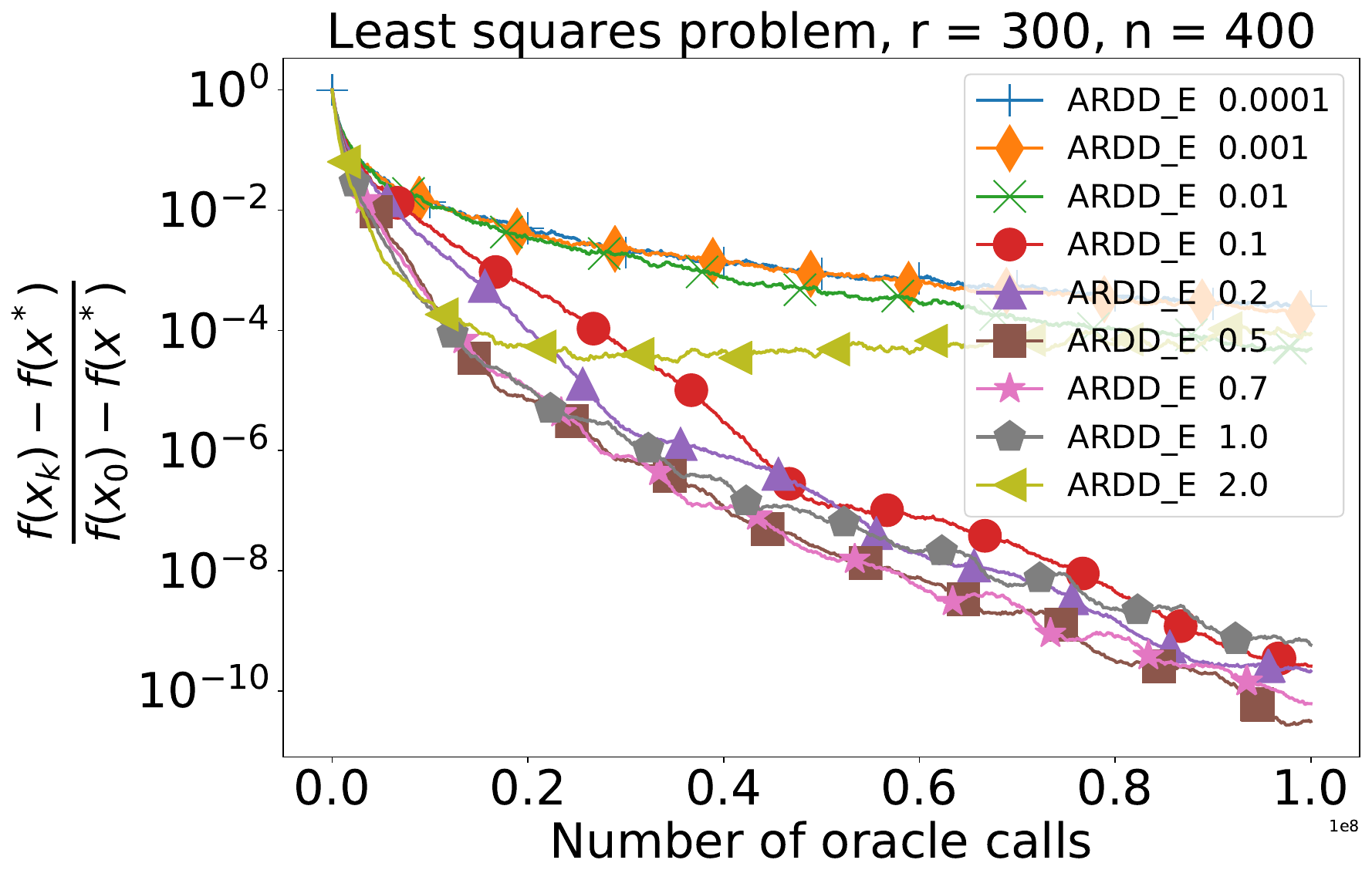}
    \includegraphics[width=0.32\textwidth]{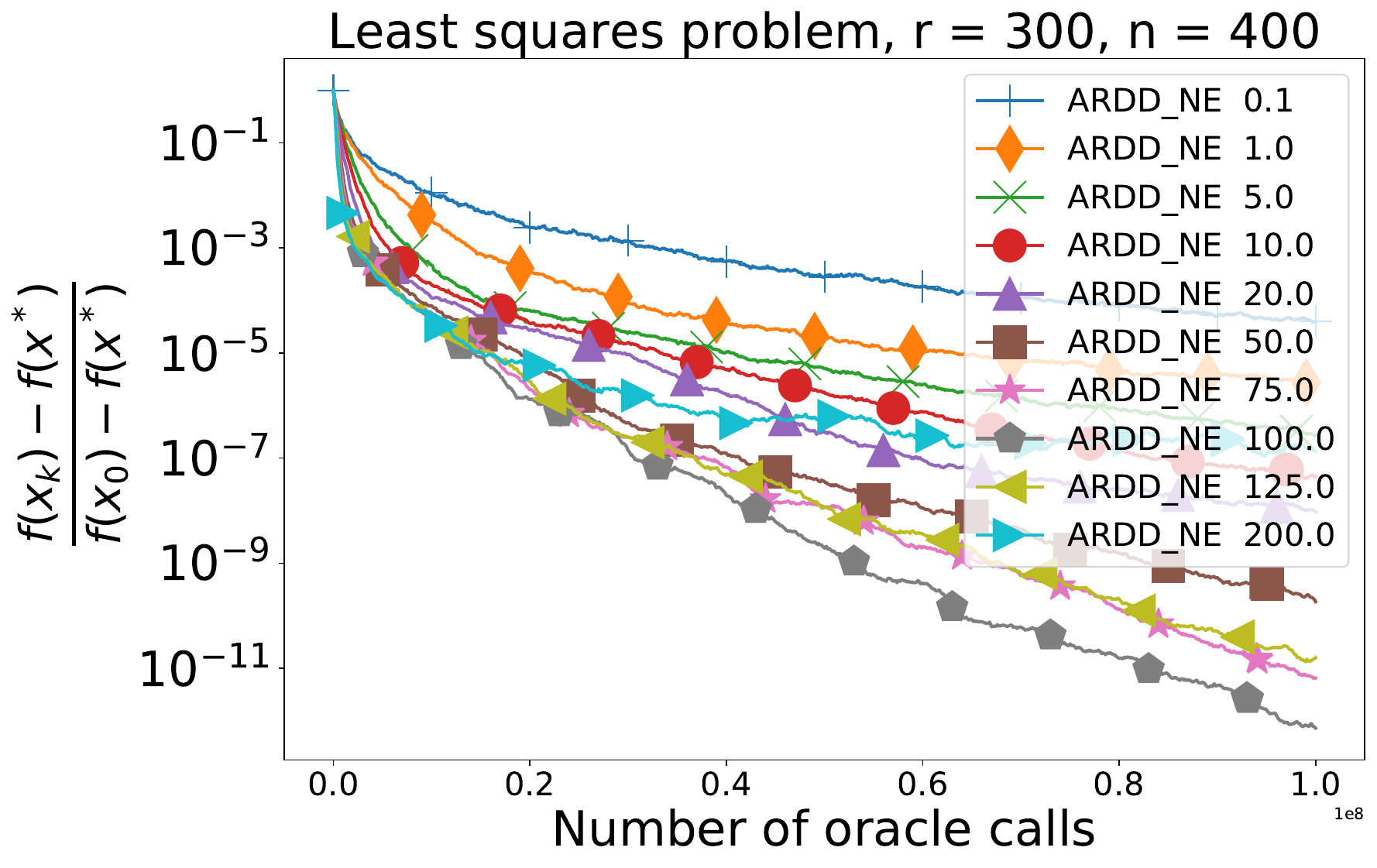}
    \includegraphics[width=0.32\textwidth]{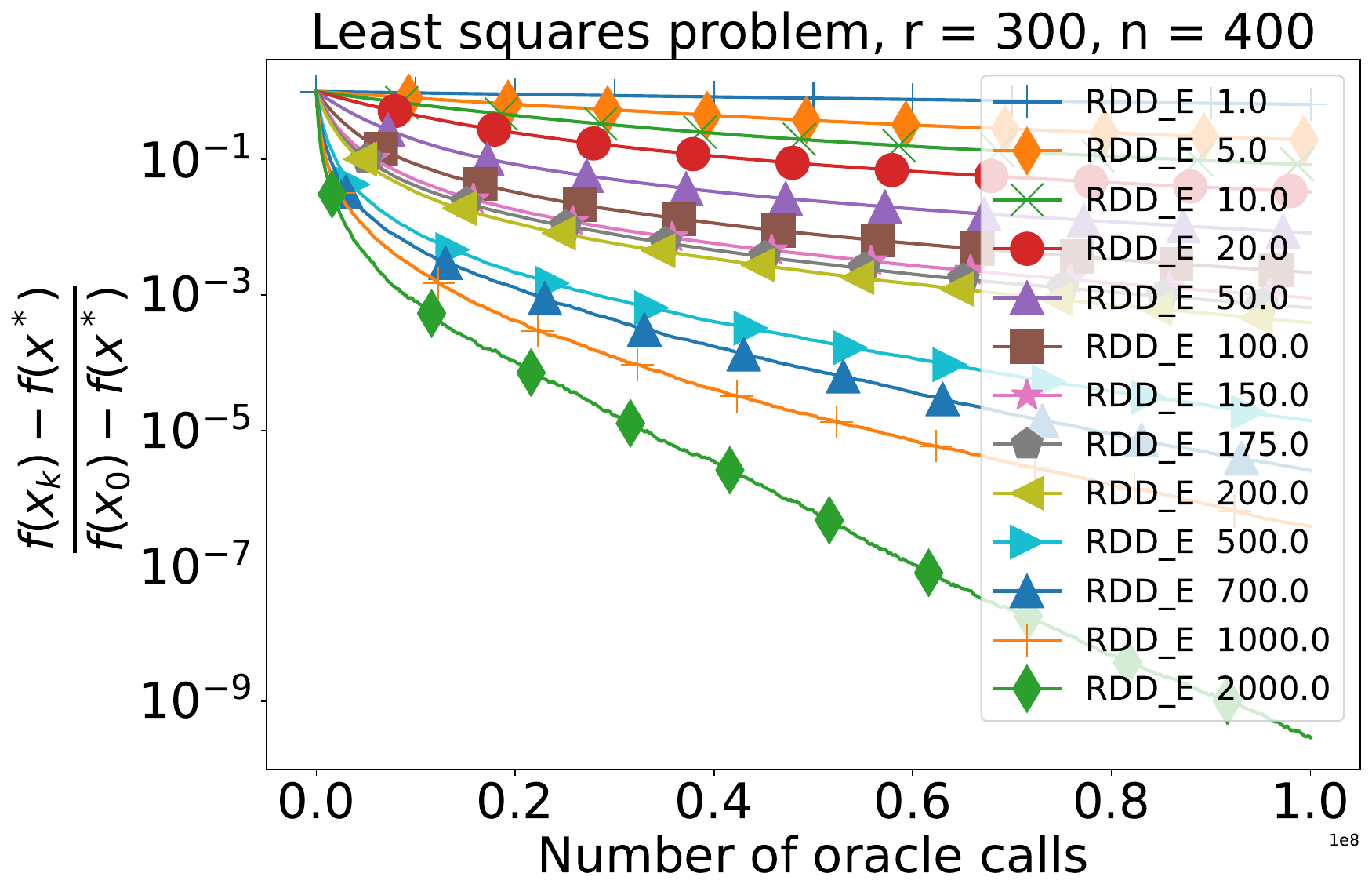}
    \includegraphics[width=0.32\textwidth]{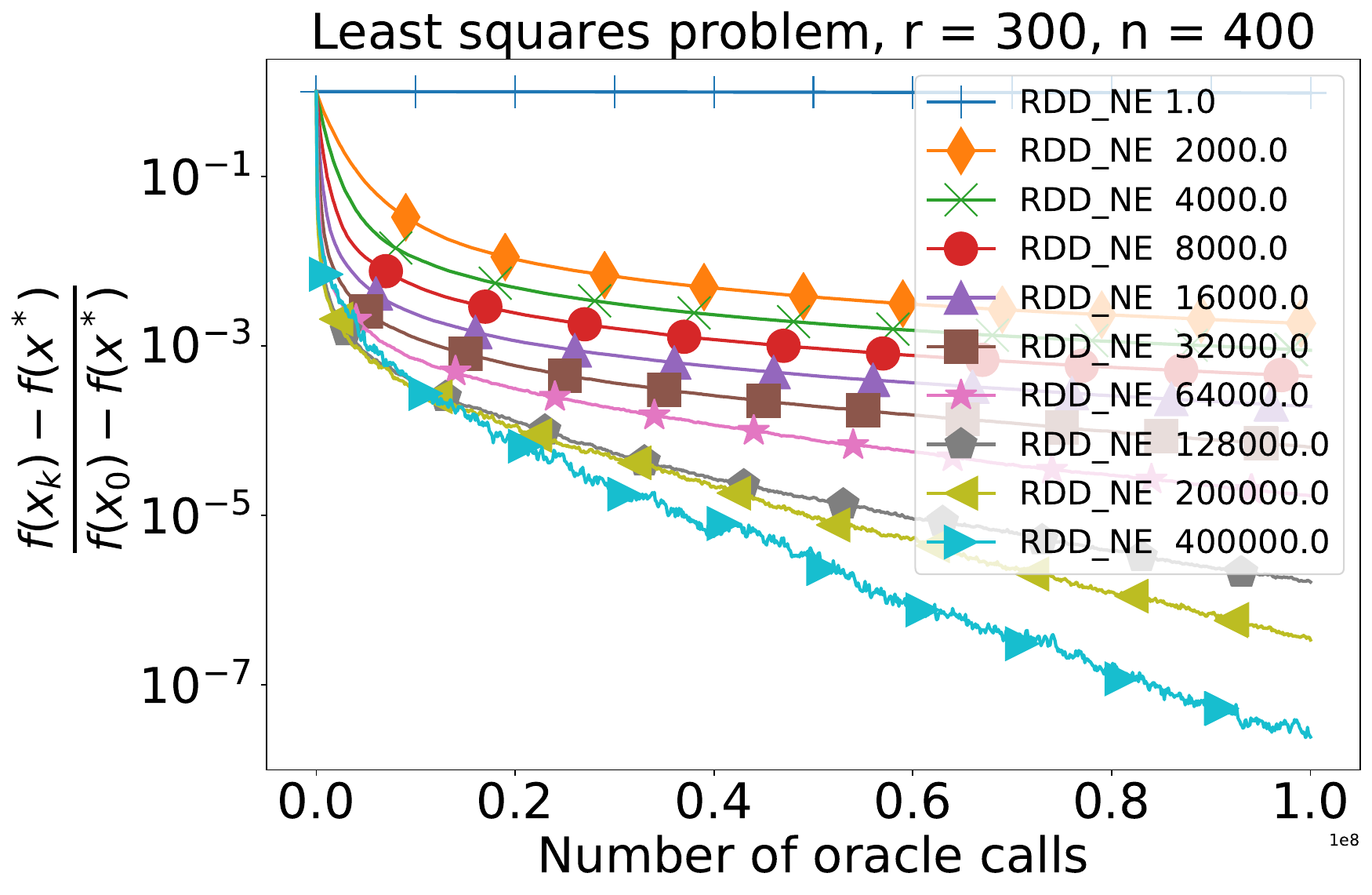}
    \includegraphics[width=0.32\textwidth]{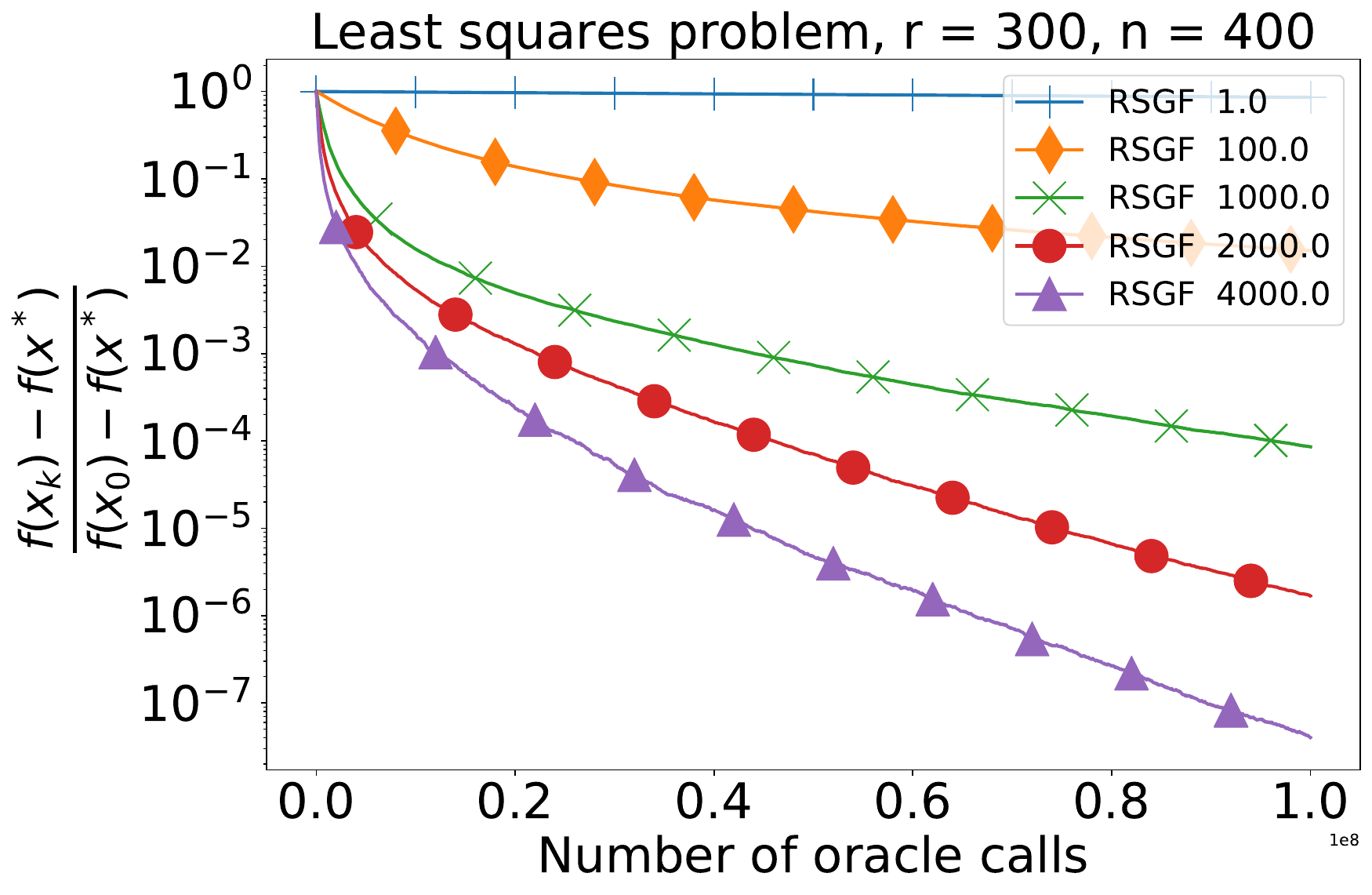}
    \caption{Stepsize tuning for ARDD, RDD and RSGF applied to solve least squares problem \eqref{eq:least_squares_pr}. We use {\_}E and {\_}NE to define $\ell_2$ and $\ell_1$ proximal setups respectively (see \eqref{eq:dp1} and \eqref{eq:dp2} for the details). For all methods batch size $m$ equals $50$. By oracle call we mean one computation of functional value of a summand. Number of oracle calls is divided by $10^8$.}
    \label{fig:ls_tuning}
\end{figure}
Besides $m=50$ we tried different batch sizes. In general, the behaviour of the considered methods was similar after proper parameters tuning.
}



\bibliography{references}

\end{document}